\documentclass[a4paper,12pt]{article}
\usepackage[latin1]{inputenc}
\usepackage{amsmath,amssymb,amsthm,fullpage}
\usepackage{graphicx}
\usepackage{float}
\usepackage{floatpag}
\usepackage{booktabs}
\usepackage{url}

\usepackage{listings}
\usepackage[usenames,dvipsnames]{color}
\lstset{
basicstyle = \normalfont,
keywordstyle = \color{Maroon}\scshape\ttfamily,
identifierstyle = \color{OliveGreen}\itshape\ttfamily,
numberstyle = \footnotesize,
language = Matlab,
numbers = left,
frame = single,
framexleftmargin = 7mm,
framexrightmargin = -10mm,
xleftmargin = 16mm,
morekeywords = {*,svds,horzcat}
}


\DeclareMathOperator{\spn}{span}
\DeclareMathOperator{\proj}{proj}
\newcommand{\norm}[1]{\left\| #1 \right\|}
\newcommand{\ip}[2]{\left\langle #1,#2\right\rangle}
\newcommand{\comments}[1]{}

\newcommand{\vbl}[1]{\textit{\texttt{#1}}}

\newcommand{\pd}[2]{\frac{\partial #1}{\partial #2}}
\newcommand{\vect}[1]{\boldsymbol{#1}}

\newtheorem{theorem}{Theorem}[section]

\newtheorem{lemma}[theorem]{Lemma}

\theoremstyle{definition}
\newtheorem{definition}[theorem]{Definition}
\newtheorem{example}[theorem]{Example}
\newtheorem{algorithm}[theorem]{Algorithm}
\newtheorem{remark}{Remark}

\usepackage[normalem]{ulem}


\newcommand {\Z} {\mathbb{Z}}
\newcommand {\R} {\mathbb{R}}
\newcommand {\N} {\mathbb{N}}
\newcommand {\range} {\mathcal{R}}
\def\mapright#1{\smash{\mathop{\mapsto}\limits^{#1}}}
\usepackage{psfrag}
\usepackage{epsfig}

\title{Computing covariant vectors, Lyapunov vectors, Oseledets vectors, and dichotomy projectors:  a comparative numerical study}
\author{Gary Froyland, Thorsten H\"{u}ls, Gary P.\ Morriss, and Thomas M.\ Watson}

\begin{document}
\maketitle
\begin{abstract}
Covariant vectors, Lyapunov vectors, or Oseledets vectors are increasingly being used for a variety of model analyses in areas such as partial differential equations, nonautonomous differentiable dynamical systems, and random dynamical systems.  These vectors identify spatially varying directions of specific asymptotic growth rates and obey equivariance principles. In recent years new computational methods for approximating Oseledets vectors have been developed, motivated by increasing model complexity and greater demands for accuracy. In this numerical study we introduce two new approaches based on singular value decomposition and exponential dichotomies and comparatively review and improve two recent popular approaches of Ginelli \emph{et al.} \cite{Ginelli2007} and Wolfe and Samelson \cite{Wolfe2007}. We compare the performance of the four approaches via three case studies with very different dynamics in terms of symmetry, spectral separation, and dimension. We also investigate which methods perform well with limited data.
\end{abstract}
\section{Introduction}

The asymptotic behaviour of a linear ODE $\dot{x}(t)=Ax(t)$, $x(t)\in \mathbb{R}^d$ is completely determined by the spectral properties of the $d\times d$ matrix $A$.
Similarly, the long-term behaviour of a nonlinear ODE $\dot{x}(t)=f(x(t))$ in a small neighbourhood of a fixed point $x_0$, for which $f(x_0)=0$, is completely determined by the spectral properties of the linearisation of $f$ at $x_0$.
Well-known extensions of these facts can be constructed when $x_0$ is periodic via Floquet theory. However, for general time-dependent linear ODEs $\dot{x}(t)=A(t)x(t)$, the eigenvalues of $A(t)$ contain no useful information about the asymptotic behaviour as the simple example of \cite[p. 30]{Colonius2000} illustrates.  On the other hand, if the $A(t)$ are generated by a process with well-defined statistics, there is a good spectral theory for the system $\dot{x}(t)=A(t)x(t)$, and this is the content of the celebrated Oseledets Multiplicative Ergodic Theorem (MET) (\cite{Oseledec1968}, see also Arnold \cite{Arnoldbook} for a thorough treatment), which we state and explain shortly.
The ``well-defined statistics'' are often generated by some underlying (typically ergodic) dynamical system.

For clarity of exposition, we will discuss discrete-time dynamics;  it is trivial  to convert a continuous-time system to a discrete-time system by creating eg.\ time-1 maps flowing from time $t$ to time $t+1$.
Let $X$ denote our \emph{base space}, the space on which the underlying process that controls the time-dependence of the matrices $A$ occurs.
As we will place a probability measure on $X$, we formally need a $\sigma$-algebra $\mathfrak{X}$ of sets that we can measure\footnote{for example, if $X$ is a topological space, we can set $\mathfrak{X}$ to be the standard Borel $\sigma$-algebra generated by open sets on $X$.}.
We denote the underlying process on $X$ by $T:X\circlearrowleft$ and assume that $T$ is invertible.
One formally requires that $T$ is measurable\footnote{if $X$ is a topological space, and $T$ is continuous, then $T$ is measurable with respect to the standard Borel $\sigma$-algebra generated by open sets.} with respect to $\mathfrak{X}$.
The ``well-defined statistics'' are captured by a \emph{$T$-invariant probability measure} $\mu$ on $X$;  that is, $\mu=\mu\circ T^{-1}$, and we say that $T$ \emph{preserves} $\mu$.
Finally, it is common to assume that the underlying process is \emph{ergodic}, which means that any subsets $X'\in\mathfrak{X}$ of $X$ that are invariant ($T^{-1}(X')=X'$, implying that trajectories beginning in $X'$ stay in $X'$ forever in forward and backward time) have either $\mu$-measure 0 (they are trivial), or $\mu$-measure 1 (up to sets of $\mu$-measure 0 they are all of $X$).

Now we come to the matrices $A$, which are generated by a measurable matrix-valued function $A:X\to M_d(\mathbb{R})$, where $M_d(\mathbb{R})$ is the space of $d\times d$ real matrices.
We choose an initial $x\in X$ and begin iterating $T$ to produce an orbit $x,Tx,T^2x,\ldots$.
Concurrently, we multiply $\cdots A(T^2x)\cdot A(Tx)\cdot A(x)$, and we are interested in the asymptotic behaviour of this matrix product.
In particular, we are interested in (i) the growth rates
\[\lambda(x,v):=\lim_{n\to\infty} \frac{1}{n}\log\|A(T^{n-1}x)\cdots  A(Tx)\cdot A(x)v\|\]
as $v$ varies in $\mathbb{R}^d$ and (ii) the subspaces $W(x)\subset \mathbb{R}^d$ on which the various growth rates occur.  Throughout, $\norm{\cdot}$ denotes the standard Euclidean vector norm or the associated matrix operator norm $\norm{A} = \max_{\norm{v}=1}\norm{Av}$; whether $\norm{\cdot}$ is a vector or matrix norm will be clear from the context.
Surprisingly, the ``well-defined statistics'' and ergodicity ensures that these limits exist, and that there are at most $d$ different values $\lambda_1>\lambda_2>\cdots>\lambda_\ell\ge -\infty$ that $\lambda(x,v)$ can take, as $v$ varies over $\mathbb{R}^d$ and $x$ varies over $\mu$-almost all of $X$ (note we allow $\lambda_\ell=-\infty$ to include the case of non-invertible $A$).
We can also decompose $\mathbb{R}^d$ pointwise in $X$ as $\mathbb{R}^d=\bigoplus_{i=1}^\ell W_i(x)$, where for all $v\in W_i(x)\setminus\{0\}$, one has
$$\lim_{n\to \infty}\frac{1}{n} \log\|A(T^{n-1}x)\cdots A(x)v\|=\lambda_i.$$
The subspaces $W_i$ are \emph{equivariant} (or \emph{covariant}) with respect to $A$ over $T$;  that is, they satisfy
$$W_i(Tx)=A(x)W_i(x)$$ for $1\le i<\ell$.


We use the following stronger version of the MET, which guarantees an Oseledets splitting even when the matrices $A$ are non-invertible.

\begin{theorem}[\cite{Froyland2010}, Theorem 4.1]
Let $T$ be an invertible ergodic measure-preserving transformation of the probability space $(X,\mathfrak{X},\mu)$.
Let $A:X\to M_d(\mathbb{R})$ be a measurable family of matrices satisfying
$$\int \log^+\|A(x)\|\ d\mu(x)<\infty.$$
Then there exist $\lambda_1>\lambda_2>\cdots >\lambda_\ell\ge -\infty$ and dimensions $m_1,\ldots,m_\ell$ with $m_1+\cdots+m_\ell=d$, and a measurable family of subspaces $W_i(x)\subset \mathbb{R}^d$ such that for $\mu$-almost every $x\in X$, the following hold.
\begin{enumerate}
\item $\dim W_i(x)=m_i$,
\item $\mathbb{R}^d=\bigoplus_{i=1}^\ell W_i(x)$,
\item $A(x)W_i(x)\subset W_i(Tx)\mbox{ with equality if $\lambda_i>-\infty$,}$
\item For all $v\in W_i(x)\setminus\{0\}$, one has
$$\lim_{n\to \infty}\frac{1}{n} \log\|A(T^{n-1}x)\cdots A(x)v\|=\lambda_i.$$
\end{enumerate}
\end{theorem}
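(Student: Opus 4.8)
The plan is to follow the classical route to the Multiplicative Ergodic Theorem, in the ``semi-invertible'' form needed here, where the base map $T$ is invertible but the fibre matrices $A(x)$ may be singular. First I would extract the exponents and their multiplicities from Kingman's subadditive ergodic theorem; second, build a forward filtration of slowly growing subspaces; third, build a backward filtration of quickly growing subspaces, which is the only place the invertibility of $T$ is used; and finally, intersect the two filtrations to produce the Oseledets splitting and read off all four conclusions.

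For the exponents, write $A^{(n)}(x)=A(T^{n-1}x)\cdots A(Tx)A(x)$ and $\phi_n(x)=\log\norm{A^{(n)}(x)}$. Subadditivity $\phi_{n+m}(x)\le\phi_n(T^mx)+\phi_m(x)$ together with $\phi_1^+\in L^1(\mu)$ lets Kingman's theorem give $\tfrac1n\phi_n(x)\to\lambda_1$ for $\mu$-a.e.\ $x$, constant by ergodicity and possibly equal to $-\infty$. Running the same argument on the induced cocycles on the exterior powers $\wedge^k\mathbb{R}^d$, whose norms grow at the rate of the sum of the top $k$ exponents counted with multiplicity, and differencing in $k$, produces the strictly decreasing list $\lambda_1>\cdots>\lambda_\ell\ge-\infty$ and the multiplicities $m_1,\dots,m_\ell$ with $\sum_i m_i=d$; the same computation shows that $\tfrac1n\log\sigma^{(n)}_j(x)$ converges to the appropriate $\lambda_i$ for each singular value $\sigma^{(n)}_j(x)$ of $A^{(n)}(x)$. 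The forward filtration is then $V_i(x)=\{v\in\mathbb{R}^d:\limsup_{n\to\infty}\tfrac1n\log\norm{A^{(n)}(x)v}\le\lambda_i\}$: using the singular value asymptotics one checks that each $V_i(x)$ is a linear subspace (the limit of the span of the bottom $m_i+\cdots+m_\ell$ right singular vectors of $A^{(n)}(x)$), that $\mathbb{R}^d=V_1(x)\supsetneq V_2(x)\supsetneq\cdots\supsetneq V_\ell(x)\supsetneq V_{\ell+1}(x)=\{0\}$ with $\dim V_i(x)=m_i+\cdots+m_\ell$, that $x\mapsto V_i(x)$ is measurable, and that $A(x)V_i(x)\subseteq V_i(Tx)$, with equality whenever $\lambda_i>-\infty$.

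For the backward filtration, put $p_i=m_1+\cdots+m_i$ and let $U_i^{(n)}(x)\subset\mathbb{R}^d$ be the span of the top $p_i$ left singular vectors of the genuine $d\times d$ matrix $A^{(n)}(T^{-n}x)=A(T^{-1}x)\cdots A(T^{-n}x)$, whose domain fibre sits over $T^{-n}x$ and whose codomain fibre sits over $x$. Since $A^{(n+1)}(T^{-(n+1)}(Tx))=A(x)A^{(n)}(T^{-n}x)$, passing from level $n$ to level $n+1$ multiplies these products on the \emph{right}, which leaves their top left singular subspaces asymptotically stable; exploiting the gap $\lambda_i>\lambda_{i+1}$ and the singular value asymptotics, a standard angle estimate shows that $(U_i^{(n)}(x))_n$ is Cauchy in the Grassmannian of $p_i$-planes and converges to a measurable subspace $U_i(x)$ with $\dim U_i(x)=p_i$ and $A(x)U_i(x)\subseteq U_i(Tx)$, with equality when $\lambda_i>-\infty$. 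The same estimate gives the transversality $V_{i+1}(x)\oplus U_i(x)=\mathbb{R}^d$, equivalently $V_i(x)\cap U_{i-1}(x)=\{0\}$. Defining $W_i(x)=V_i(x)\cap U_i(x)$ then forces $\dim W_i(x)=m_i$ and $\mathbb{R}^d=\bigoplus_{i=1}^\ell W_i(x)$; the equivariance $A(x)W_i(x)\subseteq W_i(Tx)$, with equality when $\lambda_i>-\infty$, descends from that of the $V_i$ and the $U_i$; and the growth rate on $W_i(x)\setminus\{0\}$ is a genuine limit equal to $\lambda_i$, the upper bound coming from $W_i(x)\subseteq V_i(x)$ and the matching lower bound from the fact, itself a consequence of the singular value asymptotics, that every vector outside $V_{i+1}(x)$ grows at rate at least $\lambda_i$.

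The bulk of the work is concentrated in the third step: proving that the backward singular subspaces converge, and above all that the forward-slow subspace $V_i(x)$ and the backward-fast subspace $U_{i-1}(x)$ are transverse. This rests on a quantitative bound for the angle between complementary singular subspaces of a long product in terms of the ratio of the separating singular values, a ratio that decays exponentially precisely because $\lambda_{i-1}>\lambda_i$, with Kingman's theorem invoked once more to turn ``eventually exponentially small'' into a statement valid along $\mu$-a.e.\ orbit. Non-invertibility of the $A(x)$ adds only one further subtlety: when $\lambda_\ell=-\infty$ one must check that the eventual kernel of the forward products has dimension exactly $m_\ell$ and lies inside $V_\ell(x)$, so that the vanishing singular values never contaminate the higher subspaces---this again follows from the exterior-power rate computation. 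Measurability of every object constructed is inherited from measurable selection of singular value decompositions together with the $\mu$-almost-everywhere existence of the relevant limits.
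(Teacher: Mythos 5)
Your argument is a valid route to the theorem, but it is not the route taken by the cited source. The paper itself does not prove this statement---it is quoted from Froyland, Lloyd, and Quas \cite{Froyland2010}---but Section~\ref{sec:svd} reveals the key step of that proof: one shows that the limit $\lim_{N\to\infty} A(T^{-N}x,N)\,U_i(T^{-N}x)$ exists and equals $W_i(x)$, where $U_i(y)$ is the $i$th ($m_i$-dimensional) eigenspace of the limiting symmetric matrix $\Psi(y)$ of \eqref{eq:limiting_matrix}. That is, FLQ construct each $W_i$ \emph{directly}, as a push-forward limit of a single slow eigenspace from the far past, and the analytic content is the convergence of this push-forward. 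You instead use the classical intersection-of-filtrations scheme: a forward ``slow'' flag $V_i$ and a backward ``fast'' flag $U_i$ of dimensions $p_i=m_1+\cdots+m_i$, with $W_i=V_i\cap U_i$, adapting the backward flag to the semi-invertible setting by taking limits of top left singular subspaces of the non-inverted products $A(T^{-n}x,n)$ rather than inverting the cocycle. The two constructions are, not coincidentally, the theoretical counterparts of two of the paper's algorithms: the FLQ push-forward is exactly what Algorithm~\ref{alg:svd} implements, while your intersection $V_j(x)\cap U_j(x)$ (with your backward $U_j$ being the space $S_j$ of Section~\ref{sec:ginelli}) is precisely the Wolfe--Samelson identity $W_j(x)=V_j(x)\cap S_j(x)$ of Section~\ref{sec:wolfe}. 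The push-forward route concentrates the difficulty in one convergence estimate for a single $m_i$-dimensional subspace; yours distributes it over the Grassmannian convergence of the $U_i^{(n)}$ plus a separate transversality estimate $V_{i+1}\oplus U_i=\mathbb{R}^d$, which costs an extra angle argument but also produces both flags, not merely the splitting. One small repair to your sketch: the identity driving the $n\to n+1$ stabilisation of $U_i^{(n)}(x)$ at a fixed base point $x$ should read $A^{(n+1)}(T^{-(n+1)}x)=A^{(n)}(T^{-n}x)\,A(T^{-(n+1)}x)$, which is the right-multiplication you invoke; the identity you actually wrote, $A^{(n+1)}(T^{-n}x)=A(x)\,A^{(n)}(T^{-n}x)$, is the left-multiplication that links base points $x$ and $Tx$ and is what yields the equivariance $A(x)U_i(x)\subseteq U_i(Tx)$, not the Cauchy property in $n$.
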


The range of applications of the MET to the analysis of dynamical systems is vast.
Below, we mention just of few of the settings in which the MET is used.

\begin{example}
\label{eg1.2}\quad
\begin{enumerate}
\item \textbf{Differentiable dynamics: } One of the first applications of the MET was to differentiable dynamical systems $T:X\circlearrowleft$ on smooth $d$-dimensional compact manifolds. The matrix function $A$ is the spatial derivative of $T$, denoted $DT$.  The space $\mathbb{R}^d$ is associated with the tangent space of $X$ and the equivariance condition becomes $W_i(Tx)=DT(x)\cdot W_i(x)$.  If $T$ is uniformly hyperbolic, $\bigoplus_{i:\lambda_i>0}W_i(x)=W^u(x)$, the unstable subspace at $x\in X$ and $\bigoplus_{i:\lambda_i<0}W_i(x)=W^s(x)$, the stable subspace at $x$. The spaces $W_i(x)$ provide a refinement of $W^u(x)$ and $W^s(x)$ into subspaces with different growth rates.

\item \textbf{Hard disk system:} Consider a fixed number $N$ of hard disks in a region $L_{x} \times L_{y}$ moving freely between collisions. In each collision a pair of disks change their velocities \cite{Morriss2009}. The region may be finite (hard walls) or periodic (toroidal) in either coordinate direction. The quasi-one-dimensional system studied here is a two-dimensional system with $L_{y}$ less than twice the particle diameter so that the disks remain ordered in the $x$ direction. Here $X = ([0,L_x]\times [0,L_y])^N \times \R^{2N}$ (with the appropriate equivalence classes depending on the choice of hard wall or toroidal boundary conditions) is the collection of $4N$-tuples containing all the coordinates and momenta of the $N$ particles.

The map $T:X\to X, x\mapsto \mathcal C \circ \mathcal F^{\tau(x)}(x)$ is the composition of a free-flow map $\mathcal F^{\tau(x)}$ and  a collision map $\mathcal C$. The free-flow map moves the disks in straight lines according to their momentum while none of the disks are colliding.  The time between collisions is the free-flow time $\tau(x)$ which depends on the initial condition $x\in X$.  Collisions occur when the boundary of two disks (or one disk and a wall) touch, and the collision map exchanges velocities along the direction of collision (since all disks are of equal mass).  Again, the matrix function $A$ is the spatial derivative of $T$, so that $A(x) = DT(x) = D\left(\mathcal C\circ \mathcal F^{\tau(x)}\right)(x)$.  Precise details may be found in \cite{Chung2010}.
\item \textbf{PDE:}  The Kuramoto-Sivashinski equation is a model for weakly turbulent fluids and flame fronts
\[\eta_{t} = (\eta^{2})_{x} - \eta_{xx} - \nu \eta_{xxxx},\]
where $\nu$ is a damping coefficient. Another familiar example is the complex Ginzburg-Landau equation
\[\eta_{t}=\eta-(1+i\beta) |\eta|^{2} \eta + (1+i\alpha) \eta_{xx}\]
where $\eta(x,t)$ is complex and $\alpha$ and $\beta$ are parameters. In both of these cases it is possible to approximate solutions of the partial differential equations using Fourier spectral methods (see \cite{Cvitanovic2007} for details).  For instance, in the case of the 1-dimensional Kuramoto-Sivashinski PDE we look for solutions of the form
\[\eta(x,t) = \sum_{k=-\infty}^\infty a_k(t)e^{ikx/\tilde L},\]
where $\tilde L$ is a unitless length parameter, then solve the following system of ODEs for the Fourier coefficients $a_k(t)$:
\[\dot a_k = (q_k^2 - q_k^4)a_k-i\frac{q_k}{2}\sum_{m=-\infty}^\infty a_ma_{k-m},\]
where $q_k = k/\tilde L$.  Since the $a_k$ decrease rapidly with $k$, truncating the above system of ODEs is justified.

In the setting of this review we treat $X$ as the space of Fourier coefficients $(a_1,\ldots,a_d)$ of the truncated PDE, and consider the transformation $T:X\to X$ defined by choosing some $\tau >0$ and letting $T\left((a_1,\ldots, a_d)\right) = (a_1(\tau),\ldots,a_d(\tau))$ where the $a_k(t)$ are solutions to the system of ODEs with initial conditions $a_k(0) = a_k$.  The matrix function $A$ is again the spatial derivative of $T$ so that
\[ A(x) = DT(x) = \begin{pmatrix} \pd{a_1(\tau)}{a_1} & \pd{a_1(\tau)}{a_2} & \cdots \\ \pd{a_2(\tau)}{a_1} & \pd{a_2(\tau)}{a_2} & \\ \vdots & & \ddots \end{pmatrix}.\]

\item \textbf{Nonautonomous ODEs and transfer operators:} Consider an autonomous ODE $\dot{x}(t)=f(x(t))$ on $X$ (for example, the Lorenz flow on $X=\mathbb{R}^3$), and its flow map $\xi(\tau,x)$ which flows the points $x$ forward $\tau$ time units.  We think of the coordinates $x$ as a ``generalised time'' and the ODE $\dot{x}(t)=f(x(t))$ is our base system.  We use this base ODE (the driving system) to construct a nonautonomous ODE or skew product ODE as $\dot{y}(t)=F(\xi(t,x),y(t))$. Given an initial time $t$ and a flow time $\tau$, one may construct finite-rank approximations $P^{(\tau)}_x(t)$ of the Perron-Frobenius operator $\mathcal{P}^{(\tau)}(x(t))$ that track the evolution of densities from base ``time'' $x(t)$ to $x(t+\tau)$;  see \cite{FLS10} for details.  The matrices $P^{(\tau)}_x(t)$ form a cocycle and Oseledets subspace computations enable the extraction of \emph{coherent sets} in the nonautonomous flow (see \cite{FLS10}).  Coherent sets are time-dependent analogues of almost-invariant sets for autonomous systems; see \cite{DJ99,FD03,F05}.  Finite-time constructions for coherent sets are described in \cite{FSM10}.
    In the setting of this review, $T:X\to X$ is defined as $T(x)=\xi(\tau,x)$, and $A(x)=P^{(\tau)}_x(t)$.
        \end{enumerate}
\end{example}

From now on, we denote $A(T^{n-1}x)\cdots A(x)$ as $A(x,n)$.
The proof of the classical MET \cite{Oseledec1968} proves that the matrix limit
\begin{align}
\Psi(x) = \lim_{n\to \infty} \left(\left(A(x,n)\right)^* A(x,n)\right)^{1/2n} \label{eq:limiting_matrix}
\end{align}
exists for $\mu$-almost all $x\in X$.
The matrix $\Psi(x)$ is symmetric, depends measurably on $x$, and its eigenvalues are $e^{\lambda_1(x)}>\cdots >e^{\lambda_\ell(x)}$.
The corresponding eigenspaces are denoted $U_1(x),\ldots,U_\ell(x)$ and one has
$$V_i(x):=\bigoplus_{j=i}^\ell W_j(x)=\bigoplus_{j=i}^\ell U_j(x).$$
Thus $V_i(x)$ captures growth rates from $\lambda_i(x)$ down to $\lambda_\ell(x)$;  the ``$U$''decomposition of $V_i(x)$ is orthogonal, while the ``$W$'' decomposition (the Oseledets splitting) is equivariant (or covariant).

An alternative notion of stability for non-autonomous systems is the so
called Sacker-Sell spectrum, cf.\ \cite{ss78}.
It is based on
exponential dichotomies, cf.\ \cite{co78, pa88} which we briefly
introduce for linear difference equations of the form

\begin{equation}\label{ap1}
w_{n+1} = A_n w_n,\quad n \in \Z,\quad  A_n\in M_d(\R) \text{ invertible}.
\end{equation}

In the current context we associate the sequence of matrices $\{A_n\}_{n\in \Z}$ with an invertible matrix cocycle over a single orbit, e.g. for some $x\in X$ let $A_n = A(T^nx)$.
We restrict the introduction of exponential dichotomies
to invertible systems only, and
note that a justification of our algorithm for computing dichotomy
projectors strongly depends on this assumption.
Theory defining exponential dichotomies for non-invertible matrices is contained in eg.\ \cite{as01}.
Numerical experiments indicate that Algorithms \ref{alg:dichproj1} and \ref{A2} also apply
in the non-invertible case, however, the corresponding analysis is a topic of
future research.

We denote by $\Phi$ the solution operator of \eqref{ap1},
defined as
\[
\Phi(n,m) :=
\begin{cases}
A_{n-1} \ldots A_m,\quad & \text{for }
n>m,\\
I, &\text{for } n = m,\\
A_n^{-1} \ldots  A_{m-1}^{-1},\quad&
\text{for }  n < m.
\end{cases}
\]

\begin{definition}\label{b14}
The linear difference equation \eqref{ap1}
has an \textbf{exponential dichotomy}
with data $(K,\alpha_s,\alpha_u,P_n^{s},P_n^u)$
on $J\subset \Z$, if there exist two families of projectors
$P_n^{s}$ and $P_n^u = I-P_n^{s}$ and constants
$K, \alpha_s, \alpha_u >0$, such that the following statements hold:
\begin{align}\label{ap2}
P_n^s \Phi(n,m) = \Phi(n,m)P_m^s \quad \forall n,m \in J,
\end{align}
\begin{align*}
\begin{array}{ccl}
\norm{\Phi(n,m)P_m^{s}}& \le Ke^{-\alpha_s(n-m)}
\vspace{2mm}\\
\norm{\Phi(m,n)P_n^u} & \le K e^{-\alpha_u(n-m)}
\end{array}
\quad \forall n \geq m,\ n,m\in J.
\end{align*}
\end{definition}

Consider the scaled equation
\begin{align}\label{i5}
w_{n+1} = {e^{-\lambda}} A_n w_n,\quad n \in \Z.
\end{align}

\begin{definition}
The \textbf{Sacker-Sell} or \textbf{dichotomy spectrum} is
defined as
\[
\sigma_{\text{ED}} := \{{\lambda \in \R}: \text{ \eqref{i5} has no
  exponential dichotomy on }\Z\}.
\]
The complementary
set $\R\setminus \sigma_{\text{ED}}$ is called the \textbf{resolvent
  set}.
\end{definition}

The Sacker-Sell spectrum consists of at most $d$
disjoint, closed intervals, where $d$ denotes the dimension of the
space, cf.\ \cite{ss78}, i.e.\ there exists an $\ell < d$ such that
\[
\sigma_{\text{ED}} = \bigcup_{i=1}^\ell {[\lambda_i^-,\lambda_i^+]},\quad
\text{where } {\lambda_{i+1}^+ < \lambda_i^-}\quad \text{for } i=1,\dots,\ell-1.
\]
It is well known that the Lyapunov spectrum, when it exists, is a subset of the
Sacker-Sell spectrum, see \cite{dv02}. While the Lyapunov spectrum
provides information on bounded solutions of $\Phi(n,0)$, $n\ge 0$, the
Sacker-Sell spectrum answers this question for
$\Phi(n,m)$, $n\ge m$.
These answers may be different for different initial $n$ because, in contrast to the MET setting, there is no \emph{a priori} stationarity assumption on a base dynamical system generating the matrix cocycle.
Note that for ${\lambda \in \R\setminus \sigma_{\text{ED}}}$ it
follows from \cite[Lemma 2.7]{pa88} that the inhomogeneous
equation $w_{n+1} = {e^{-\lambda}} A_n w_n + r_n$
has for every bounded sequence
$r_\mathbb Z$ a unique bounded solution on $\Z$.

Dichotomy projectors of the scaled equation \eqref{i5} are
constant in resolvent intervals $R_i := (\lambda_{i}^+,\lambda_{i-1}^-)$,
$i=1,\dots,\ell+1$, where $\lambda_0^- = \infty$ and $\lambda_{\ell+1}^+ =
-\infty$, see Figure \ref{figd2}.
We denote these families of projectors
by $(P_n^{i,s}, P_n^{i,u})$.
\begin{figure}[hbt]
 \begin{center}
\def\svgwidth{8cm}


\begingroup%
  \makeatletter%
  \providecommand\color[2][]{%
    \errmessage{(Inkscape) Color is used for the text in Inkscape, but the package 'color.sty' is not loaded}%
    \renewcommand\color[2][]{}%
  }%
  \providecommand\transparent[1]{%
    \errmessage{(Inkscape) Transparency is used (non-zero) for the text in Inkscape, but the package 'transparent.sty' is not loaded}%
    \renewcommand\transparent[1]{}%
  }%
  \providecommand\rotatebox[2]{#2}%
  \ifx\svgwidth\undefined%
    \setlength{\unitlength}{231bp}%
    \ifx\svgscale\undefined%
      \relax%
    \else%
      \setlength{\unitlength}{\unitlength * \real{\svgscale}}%
    \fi%
  \else%
    \setlength{\unitlength}{\svgwidth}%
  \fi%
  \global\let\svgwidth\undefined%
  \global\let\svgscale\undefined%
  \makeatother%
  \begin{picture}(1,0.37229437)%
    \put(0,0){\includegraphics[width=\unitlength]{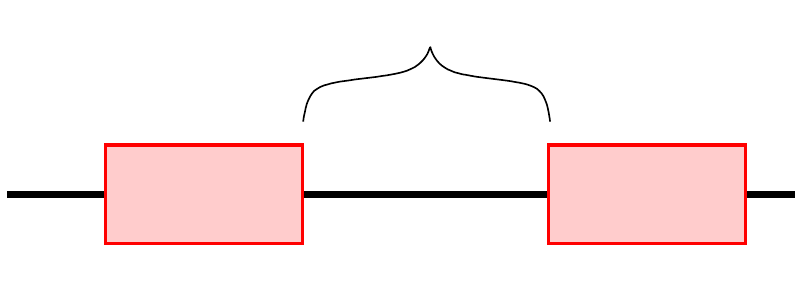}}%
    \put(0.51873885,0.32877866){\color[rgb]{0,0,0}\makebox(0,0)[lb]{\smash{$R_i$}}}%
    \put(0.20157354,0.1232276){\color[rgb]{0,0,0}\makebox(0,0)[lb]{\smash{$\sigma_\text{ED}$}}}%
    \put(0.76866788,0.12203145){\color[rgb]{0,0,0}\makebox(0,0)[lb]{\smash{$\sigma_\text{ED}$}}}%
    \put(0.11219926,0.01735393){\color[rgb]{0,0,0}\makebox(0,0)[lb]{\smash{$\lambda_i^-$}}}%
    \put(0.36057863,0.0182228){\color[rgb]{0,0,0}\makebox(0,0)[lb]{\smash{$\lambda_i^+$}}}%
    \put(0.66670524,0.01919838){\color[rgb]{0,0,0}\makebox(0,0)[lb]{\smash{$\lambda_{i-1}^-$}}}%
    \put(0.91099351,0.0208945){\color[rgb]{0,0,0}\makebox(0,0)[lb]{\smash{$\lambda_{i-1}^+$}}}%
  \end{picture}%
\endgroup%


\caption{Spectral setup.\label{figd2}}
\end{center}
\end{figure}

In analogy to the MET we obtain the family of subspaces
\begin{align*}
W_n^{i} = \range(P_n^{i,s}) \cap \range (P_n^{i+1,u}),\quad n
\in \Z,\quad i =1,\dots,\ell
\end{align*}
that decompose $\R^d$ for each $n\in\Z$
\begin{align*}
\R^d = \bigoplus_{i=1}^\ell W^i_n,
\end{align*}
and using the cocycle property
\eqref{ap2} it follows for all $i=1,\dots,\ell$ that
\begin{align*}
A_n W_n^i =  W_{n+1}^i,\quad n\in \Z.
\end{align*}
Furthermore, for each $w \in W_m^i$, there exists a constant $K = K(w)>0$
such that the following equations hold
{\begin{align*}
\norm{\Phi(n,m)w} & = K e^{\left(\lambda_i^+ + r^+_i(n-m)\right)(n-m)}, \quad \text{for } n \geq m, \quad \text{where } \limsup_{n\to\infty} r^+_i(n) = 0, \\
\norm{\Phi(n,m)w} & = K e^{\left(\lambda_i^- +r_i^-(n-m)\right)(n-m)},\quad \text{for } n<m.\quad \text{where }\limsup_{n\to\infty}r_i^-(n) = 0.
 \end{align*}}


When working with data over a finite time interval, one has access only to a finite sequence $A_0,A_1,\ldots,A_{n-1}$.
In this case, one either assumes there is an underlying ergodic process generating the sequence $A_0,A_1,\ldots,A_{n-1}$  or one considers exponential dichotomies.

An outline of the paper is as follows.
In Sections 2 and 3, we introduce two new methods for computing Oseledets vectors.
The first method is based on the proof of the generalised MET in \cite{Froyland2010} and is particularly simple to implement and fast to execute.
The second method is an adaptation of an approach to compute dichotomy projectors \cite{hu08}.
In Section 4 we review the approaches by Ginelli \emph{et al} \cite{Ginelli2007} and Wolfe and Samelson \cite{Wolfe2007}.
In Sections 2, 3, and 4 we provide MATLAB code snippets to implement the algorithms presented.
Section 5 contains numerical comparisons of the performance of the four methods on three dynamical systems.
The first case study is a dynamical systems formed via composition of a sequence of $8\times 8$ matrices constructed so that all Oseledets vectors are known at time 0;  we thus compare the accuracy of the methods \emph{exactly} in this case study.
The second case study is an eight-dimensional system generated by two hard disks in a quasi-one-dimensional box.
The third case study is a nonlinear model of time-dependent fluid flow in a cylinder;  the matrices are generated by finite-rank approximations of the corresponding time-dependent transfer operators.
The three case studies have been chosen to represent a cross-section of a variety of features of systems that either help or hinder the computation of Oseledets vectors, and we draw out the advantages and disadvantages of each of the four methods considered.

\section{An SVD-based approach\label{sec:svd}}
\label{sec:SVD}

The approach outlined in this section is simple to execute and exhibits quick convergence.  However, as the length of the sample orbit becomes too large this approach fails.

In \cite[proof of Theorem 4.1]{Froyland2010}  it is proven that the limit
\begin{align*}
  \lim_{N \to \infty}A(T^{-N}x,N)U_i(T^{-N}x)
\end{align*}
exists and is equal to the $i$th Oseledets subspace $W_i(x)$.
That is, if one computes $U_i$ in the far past and pushes forward to the present, the result is a subspace close to $W_i(x)$.
Thus, the strategy in \cite{Froyland2010} is to first estimate $U_i$ in the past and push forward.



The numerical method of approximating $W_j(x)$, $x\in X$, is implemented in the following steps:
\begin{algorithm}[To estimate $W_j(x)$]
\label{alg:svd}
\quad
\begin{enumerate}
\item \label{item:SVD_step_1}Choose $M,N >0$ and form the matrix
  \begin{equation}
  \label{PsiM}
    \Psi^{(M)}(T^{-N}x) = \left(A(T^{-N}x,M)^* A(T^{-N}x,M)\right)^{1/2M}
  \end{equation}
as an approximation of \eqref{eq:limiting_matrix} at $T^{-N}x \in X$.
\item \label{item:SVD_step_2}Compute $U_j^{(M)}(T^{-N}x)$, the $j$th orthonormal eigenspace of $\Psi^{(M)}(T^{-N}x)$ as an approximation of $U_j(T^{-N}x)$.
\item \label{item:SVD_step_3}Define $W_j^{(M,N)}(x) = A(T^{-N}x,N)U_j^{(M)}(T^{-N}x)$, approximating the Oseledets subspace $W_j(x)$.
\end{enumerate}
\end{algorithm}

Listing \ref{lst:svd_code} shows part of a MATLAB implementation of Algorithm \ref{alg:svd}.  The array \vbl{A}$ = \left[A(T^{-N}x) \ | \ A(T^{-N+1}x) \ | \cdots | \ A(T^{M-1}x)\right]$ contains the $d\times d$ matrices which generate the cocycle $A:X\times \mathbb Z^+ \to M_d(\mathbb R)$, and the matrix \vbl{Psi} is formed by multiplying the matrices contained in \vbl{A}.  Step \ref{item:SVD_step_1} of Algorithm \ref{alg:svd} is performed prior to the code in Listing \ref{lst:svd_code}, Step \ref{item:SVD_step_2} is performed in lines 1-3 and lines 4-7 perform Step \ref{item:SVD_step_3}.  The function returns \vbl{Wj} as its estimate to $W_j(x)$.

\begin{figure}[hbt]
  \begin{lstlisting}[label = lst:svd_code, caption = Sample MATLAB code of Algorithm \ref{alg:svd} to approximate $W_j(x)$]
[~,s,v] = svd(Psi);
[~,p] = sort(diag(s),'descend');
Wj = v(:,p(j))/norm(v(:,p(j)));
for h = 1:N
  Wj = A(:,(h-1)*dim+1:h*dim)*Wj;
  Wj = Wj/norm(Wj);
end
  \end{lstlisting}
\end{figure}

The values of $M$ and $N$ can be chosen with relative freedom and in our examples that follow we have chosen $M= 2N$ to compute over a time window centred on $x$, from $T^{-N}x$ to $T^Nx$.  Unfortunately, we cannot choose $M$ and $N$ arbitrarily large and expect accurate results.  If $A(T^{-N}x,M)$ is constructed via the product
\begin{equation}
A(T^{-N}x,M) = A(T^{M-N-1}x) \cdots A(T^{-N+1}x)A(T^{-N}x) \notag
\end{equation}
then with larger $M$ the numerical inaccuracies of matrix multiplication compound and this product becomes more singular and thus a poorer approximation of $A(T^{-N}x,M)$.  Because of this, $\Psi^{(M)}(T^{-N}x)$ cannot be expected to accurately approximate $\Psi(T^{-N}x)$ for large $M$.  However, even if we suppose $\Psi^{(M)}(T^{-N}x)$ accurately approximates $\Psi(T^{-N}x)$, the small, but non-zero, difference in $U^{(M)}_j(T^{-N}x)$ and $U_j(T^{-N}x)$ grows roughly as $O\left(e^{N(\lambda_1-\lambda_j)}\right)$ during the push-forward in step \ref{item:SVD_step_3} above.  For these reasons $M$ and $N$ must be chosen carefully.

\subsection{Improving the basic SVD-based approach\label{sec:svd2}}

We present a simple improvement that can overcome one of the sources of numerical instability, namely the push-forward process in step \ref{item:SVD_step_3} above.

\begin{figure}[hbt]
  \centering
  \def\svgwidth{\textwidth}
  
\begingroup%
  \makeatletter%
  \providecommand\color[2][]{%
    \errmessage{(Inkscape) Color is used for the text in Inkscape, but the package 'color.sty' is not loaded}%
    \renewcommand\color[2][]{}%
  }%
  \providecommand\transparent[1]{%
    \errmessage{(Inkscape) Transparency is used (non-zero) for the text in Inkscape, but the package 'transparent.sty' is not loaded}%
    \renewcommand\transparent[1]{}%
  }%
  \providecommand\rotatebox[2]{#2}%
  \ifx\svgwidth\undefined%
    \setlength{\unitlength}{482.59541016bp}%
    \ifx\svgscale\undefined%
      \relax%
    \else%
      \setlength{\unitlength}{\unitlength * \real{\svgscale}}%
    \fi%
  \else%
    \setlength{\unitlength}{\svgwidth}%
  \fi%
  \global\let\svgwidth\undefined%
  \global\let\svgscale\undefined%
  \makeatother%
  \begin{picture}(1,0.34774953)%
    \put(0,0){\includegraphics[width=\unitlength]{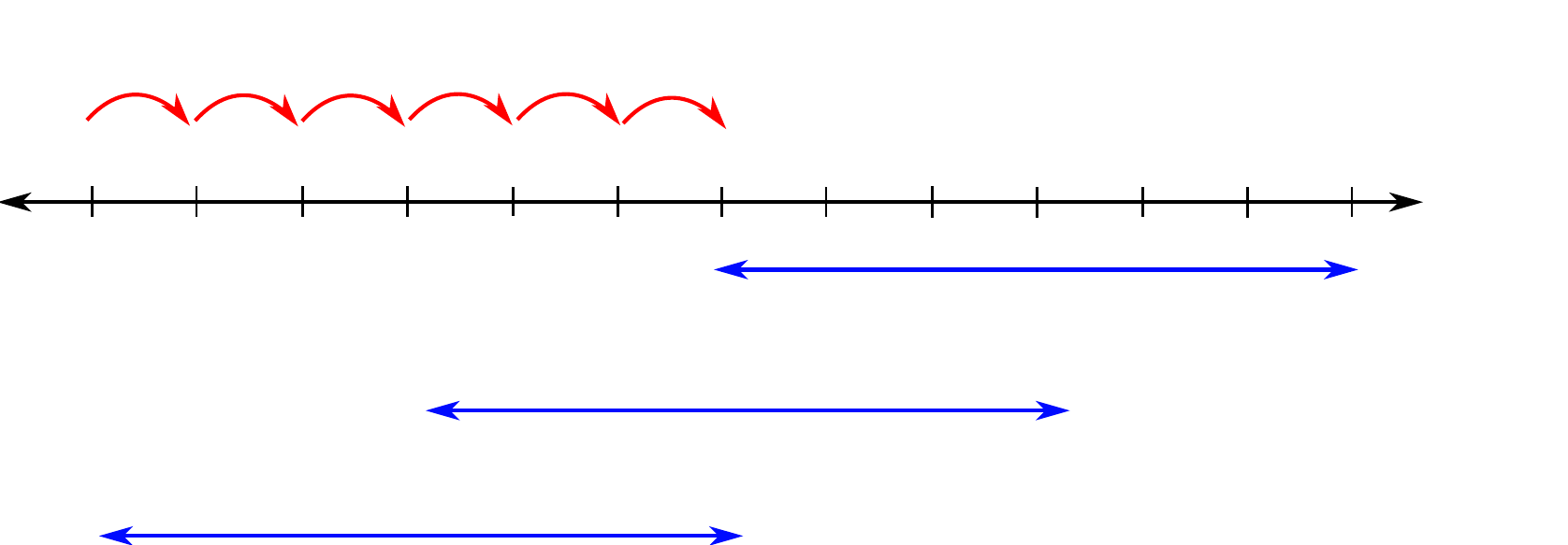}}%
    \put(0.45110571,0.23849567){\color[rgb]{0,0,0}\makebox(0,0)[lb]{\smash{$x$}}}%
    \put(0.2231491,0.23740918){\color[rgb]{0,0,0}\makebox(0,0)[lb]{\smash{$T^{-N_k}x$}}}%
    \put(0.14882441,0.23941571){\color[rgb]{0,0,0}\makebox(0,0)[lb]{\smash{$\cdots$}}}%
    \put(0.02371107,0.2390941){\color[rgb]{0,0,0}\makebox(0,0)[lb]{\smash{$T^{-N_1}x$}}}%
    \put(0.84400866,0.23795163){\color[rgb]{0,0,0}\makebox(0,0)[lb]{\smash{$T^Mx$}}}%
    \put(0.36083317,0.2371375){\color[rgb]{0,0,0}\makebox(0,0)[lb]{\smash{$\cdots$}}}%
    \put(0.40308832,0.04609009){\color[rgb]{0,0,1}\makebox(0,0)[lb]{\smash{\reflectbox{$\ddots$}}}}%
    \put(0.6467707,0.18438988){\color[rgb]{0,0,1}\makebox(0,0)[lb]{\smash{$\Psi^{(M)}(x)$}}}%
    \put(0.43269017,0.09558435){\color[rgb]{0,0,1}\makebox(0,0)[lb]{\smash{$\Psi^{(M)}(T^{-N_k}x)$}}}%
    \put(0.21624146,0.01412008){\color[rgb]{0,0,1}\makebox(0,0)[lb]{\smash{$\Psi^{(M)}(T^{-N_1}x)$}}}%
    \put(0.61312334,0.14452571){\color[rgb]{0,0,1}\makebox(0,0)[lb]{\smash{\reflectbox{$\ddots$}}}}%
    \put(-0.01157433,0.30611268){\color[rgb]{1,0,0}\makebox(0,0)[lb]{\smash{$U_j^{(M)}(T^{-N_1}x)$}}}%
    \put(0.19208461,0.30755113){\color[rgb]{1,0,0}\makebox(0,0)[lb]{\smash{$W_j^{(M,N_k)}(T^{-N_k}x)$}}}%
    \put(0.14520504,0.30610226){\color[rgb]{1,0,0}\makebox(0,0)[lb]{\smash{$\cdots$}}}%
    \put(0.43463711,0.30739317){\color[rgb]{1,0,0}\makebox(0,0)[lb]{\smash{$W_j^{(M,0)}(x)$}}}%
    \put(0.39039169,0.3059177){\color[rgb]{1,0,0}\makebox(0,0)[lb]{\smash{$\cdots$}}}%
  \end{picture}%
\endgroup%

  \caption{Schematic of the re-orthogonalisation described in Section \ref{sec:svd2}.  The black line represents the orbit centred at $x\in X$ and the points $T^{-N_k}x$ are those points at which we ensure orthogonality with the subspaces $V_j(T^{-N_k}x)^\perp$.  To do this we use the (blue) approximations $\Psi^{(M)}(T^{-N_k}x)$ to approximate $V_j(T^{-N_k}x)^\perp$ and perform the (red) push-forward and orthoganlisation steps starting with $U_j^{(M)}(T^{-N_1}x)$ and ending with $W_j^{(M,0)}(x)$ (see Algorithm \ref{alg:svd2}).}
  \label{fig:svd2_schematic}
\end{figure}

Recall that the subspace $V_j(x) = U_j(x) \oplus \cdots \oplus U_\ell(x)=\left(U_1(x)\oplus\cdots \oplus U_{j-1}(x)\right)^\perp$ is $A$-invariant and that for $v\in V_j(x) \backslash V_{j+1}(x)$ (with $V_{\ell+1}(x) = \{0\}$) we have
\begin{align}
  \lambda_j(x) = \lim_{n\to\infty}\frac{1}{n}\log\norm{A(x,n)v}. \notag
\end{align}

The subspace  $V_j(x)$ contains $W_j(x), W_{j+1}(x),\ldots, W_\ell(x)$, and so the Oseledets subspace $W_j(x)$ is necessarily perpendicular to all $U_1(x),\ldots,U_{j-1}(x)$.  To solve the numerical instability of step \ref{item:SVD_step_3} we enforce this condition periodically.

The amended algorithm is implemented as follows:
\begin{algorithm}[To estimate $W_j(x)$]
\sloppy
\label{alg:svd2}
\quad
\begin{enumerate}
\item \label{item:svd2_s1}Choose $M,N_1>N_2>\cdots > N_n=0$ and form the matrices
  \begin{align}
    \Psi^{(M)}(T^{-N_k}x) = \left(A(T^{-N_k}x,M)^* A(T^{-N_k}x,M)\right)^{1/2M},\quad k=1,\ldots,n.\notag
  \end{align}
\item \label{item:svd2_s2}Compute all the orthonormal eigenspaces $U^{(M)}_i(T^{-N_k}x)$, $i=1,\ldots,j-1$ of (\ref{PsiM}) (replacing $N$ with $N_k$ in (\ref{PsiM})) and the eigenspace $U^{(M)}_j(T^{-N_1}x)$.
\item \label{item:svd2_s3}
Let $\proj_V:\mathbb R^d \to \mathbb R^d$ be the orthogonal projection onto the subspace $V$ so that $\ker\left(\proj_V\right) = V^\perp$ and $V^{(M)}_j(x) = \left(U^{(M)}_1(x) \oplus \cdots \oplus U^{(M)}_{j-1}(x)\right)^\perp$. Define $W^{(M,N_1)}_j(T^{-N_1}x) = U_j^{(M)}(T^{-N_1}x)$, and then define iteratively by pushing forward and taking orthogonal projections:
  \begin{align}
W^{(M,N_{k+1})}_j\left(T^{-N_{k+1}}x\right) = \proj_{V_j^{(M)}\left(T^{-N_{k+1}}x\right)} \left(A\left(T^{-N_k}x, N_{k+1} - N_k\right) W^{(M,N_k)}_j\left(T^{-N_k}x\right)\right) \notag
\end{align}
\item $W^{(M,N_n)}_j(x)=W^{(M,0)}_j(x)$ is our approximation of $W_j(x)$.
\end{enumerate}
\end{algorithm}

Listing \ref{lst:svd2} shows an example implementation of Algorithm \ref{alg:svd2} in MATLAB.  Lines 1-18 are responsible for performing Steps \ref{item:svd2_s1} and \ref{item:svd2_s2}, whilst the push forward procedure of Step \ref{item:svd2_s3} is performed in lines 20-30.  Again, the matrix cocycle is stored in \vbl{A}$=\left[ A(T^{-N}x) \ \right|\allowbreak \left. \ A(T^{N-1}x) \ \right|\allowbreak \left. \cdots \right|\allowbreak \left. \ A(T^{M-1}x)\right]$ and the function returns \vbl{Wj} as its approximation of $W_j(x)$.  The variable \vbl{Nk} is a one-dimensional array containing the elements of $\{N_k\}$ and is counted by \vbl{k}.

\begin{figure}[!h]
  \begin{lstlisting}[label=lst:svd2, caption = Sample MATLAB code of Algorithm \ref{alg:svd2} to approximate $W_j(x)$]
k=0;
for n = Nk,
  Psi = eye(dim);
  for i=0:M-1,
    Psi = A(:,(n + i - 1)*dim + 1:(n + i)*dim)*Psi;
    Psi = Psi/normest(Psi);
  end
  [~,s,u] = svd(Psi);
  [~,p] = sort(diag(s),'descend');
  if n==1,
    Wj = u(:,p(j))/norm(u(:,p(j)));
  else
    for i = 1:j-1,
      k = k+1;
      U(:,i,k) = u(:,p(i))/norm(u(:,p(i)));
    end
  end
end
k=0;
for n = 1:N,
  Wj = A(:,(n-1)*dim+1:n*dim)*Wj;
  Wj = Wj/norm(Wj);
  if any(Nk == n+1),
    k = k+1;
    for i = 1:j-1,
      Wj = Wj - dot(Wj,U(:,i,k))*U(:,i,k);
      Wj = Wj/norm(Wj);
    end
  end
end
  \end{lstlisting}
\end{figure}

\begin{remark}
\label{svdbad}
Unfortunately, some numerical issues with this approach remain.  They stem primarily from the long multiplication involved in building the variable \vbl{Psi} of Listings \ref{lst:svd_code} and \ref{lst:svd2}.  This results in \vbl{Psi} becoming too singular and hence $U^{(M)}_j(T^{-n}x)$ ($j\neq 1$) poorly approximates $U_j(T^{-n}x)$.  As can be seen in Section \ref{sec:numerical}, Algorithm \ref{alg:svd2} works superbly for $W_2(x)$ as $U^{(M)}_1(T^{-n}x)$ is well approximated for large $n$.  However when estimating $W_j(x)$, $j>2$, a good estimate of $U^{(M)}_{j-1}(x)$ is required for an accurate projection $\proj_{V^{(M)}_j}$ and for large $n$ such an estimate becomes unreliable.
\end{remark}

\section{A dichotomy projector approach\label{sec:dich_proj}}

We derive an approach for the computation of a vector
 $w^j_n \in W^j_n = \range(P^{j,s}_n)\cap \range (P^{j+1,u}_n)$.
For this task, we first need a guess of ${\Lambda^\text{right} \in R_{i}}$ and of
${\Lambda^\text{left} \in R_{i+1}}$ in two neighbouring resolvent intervals that
lie close to the common spectral interval, see
Figure \ref{figd1}.
\begin{figure}[hbt]
 \begin{center}
\def\svgwidth{8cm}
\begingroup%
  \makeatletter%
  \providecommand\color[2][]{%
    \errmessage{(Inkscape) Color is used for the text in Inkscape, but the package 'color.sty' is not loaded}%
    \renewcommand\color[2][]{}%
  }%
  \providecommand\transparent[1]{%
    \errmessage{(Inkscape) Transparency is used (non-zero) for the text in Inkscape, but the package 'transparent.sty' is not loaded}%
    \renewcommand\transparent[1]{}%
  }%
  \providecommand\rotatebox[2]{#2}%
  \ifx\svgwidth\undefined%
    \setlength{\unitlength}{197bp}%
    \ifx\svgscale\undefined%
      \relax%
    \else%
      \setlength{\unitlength}{\unitlength * \real{\svgscale}}%
    \fi%
  \else%
    \setlength{\unitlength}{\svgwidth}%
  \fi%
  \global\let\svgwidth\undefined%
  \global\let\svgscale\undefined%
  \makeatother%
  \begin{picture}(1,0.41624365)%
    \put(0,0){\includegraphics[width=\unitlength]{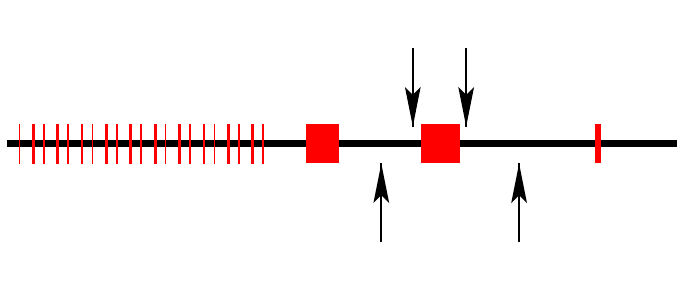}}%
    \put(0.55189557,0.36020153){\color[rgb]{0,0,0}\makebox(0,0)[lb]{\smash{$\Lambda^\text{left}$}}}%
    \put(0.65799593,0.35946376){\color[rgb]{0,0,0}\makebox(0,0)[lb]{\smash{$\Lambda^\text{right}$}}}%
    \put(0.51889174,0.01511336){\color[rgb]{0,0,0}\makebox(0,0)[lb]{\smash{$R_3$}}}%
    \put(0.72040306,0.01763223){\color[rgb]{0,0,0}\makebox(0,0)[lb]{\smash{$R_2$}}}%
    \put(0.8488665,0.2443325){\color[rgb]{0,0,0}\makebox(0,0)[lb]{\smash{$\lambda_1$}}}%
  \end{picture}%
\endgroup%
\caption{Choice of {$\Lambda^\text{left/right}$} in case $i = 2$.\label{figd1}}
\end{center}
\end{figure}

Numerical experiments indicate
that we get the best results by
choosing $\Lambda^\text{right}$ and $\Lambda^\text{left}$ close to (but outside) the
second Sacker-Sell interval.
This conclusion is supported by theoretical estimates on the approximation error for Algorithm \ref{A2}, discussed at the end of Section 3.

The following observation from  \cite{hu08, hu09}  allows the computation of
dichotomy projectors by solving
\begin{align}\label{a2}
   w_{n+1}^i = A_n w_n^i + \delta_{n,m-1} e_i,\quad n \in \Z,\quad e_i
   \ i\text{-th unit vector.}
\end{align}
With Green's function, cf.\ \cite{pa88},
the unique bounded solution $w_\Z^i$ of \eqref{a2} has the explicit
form
\begin{align}\label{a4}
w_{n}^i = G(n,m)e_i,\ n \in \Z, \quad  \text{where} \quad
G(n,m) =
\begin{cases}
\Phi(n,m) P_m^s,\quad & n \ge m,\\
-\Phi(n,m) P_m^u,\quad & n < m,
\end{cases}
\end{align}
and consequently
\begin{align*}
P^s_m =
\begin{pmatrix}
  | & & | \\
  w^1_m & \cdots & w^d_m \\
  | & & |
\end{pmatrix}.
\end{align*}
Numerically, we approximate the unique bounded solution on $\Z$ by the
least squares solution of \eqref{a2} on a sufficiently long interval.
For an error analysis of this approximation process, we refer to
\cite[Theorem 4]{hu09}.

The algorithms that we propose in this section compute a vector
$w \in W_0^j$ in analogy to $W_j(x)$ in the previous sections. For simplicity, we restrict the representation
to the case $j=2$ and assume that $W_n^1$ and  $W_n^2$ are
one-dimensional subspaces.

\sloppy In the absence of information about the dichotomy intervals, one may proceed as follows.
Given a finite sequence of matrices, one can estimate a point in the spectral interval ${[\lambda_q^-,\lambda_q^+]}, q=1,2,3$ by computing the {(logarithmic)} growth rates of one-, two-, and three-dimensional subspaces using direct multiplication;  these growth rates should approximate {$\lambda_1, \lambda_1+\lambda_2,$ and $\lambda_1+\lambda_2+\lambda_3$}, respectively.
By taking {differences} to obtain estimates ${\hat{\lambda}_q}, q=1,2,3$, (the caret indicating estimated quantities) one should obtain values in the interior of ${[\lambda_q^-,\lambda_q^+]}, q=1,2,3$.
We then estimate ${\Lambda^\text{left}=\hat{\lambda}_2-(\hat{\lambda}_2-\hat{\lambda}_3)/10\lessapprox\lambda_2^-}$ and ${\Lambda^\text{right}=\hat{\lambda}_2-(\hat{\lambda}_2-\hat{\lambda}_1)/10\gtrapprox\lambda_2^+}$.

In the first step of our first algorithm, we compute a basis of the two-dimensional space $\range(P_0^{3,u})$.
Then, in the second step, we search for the direction $w$ in this subspace
that additionally lies in $\range(P_0^{2,s})$ and assure in this way
that $w \in \range(P_0^{2,s}) \cap \range(P_0^{3,u}) = W_0^2$.

\begin{algorithm}[A Dichotomy Projector approach to estimate $W_2(x)$ by computing $W_0^2$\label{alg:dichproj1}]
\quad
\begin{enumerate}
\item Suppose $N \in \N$ and consider $n\in [-N,N]\cap\Z$. Let $A_n = A(T^nx)$.

Solve the least squares problem
\begin{align}\label{al1}
    \tilde w^i_{n+1} &= {e^{-\Lambda^\text{left}}} A_n \tilde w_n^i + \delta_{n,-1}
r^i, \quad  n = -N,\ldots,N-1, \ i=1,2 \\
& \text{such that} \ \|(\tilde w^i_{-N},\ldots,\tilde w^i_N)\|_2 \  \text{is minimised,}\notag
\end{align}
where the $r^i$ are chosen at random and $\norm{\cdot}_2$ is the $\ell^2$-norm. Define $p^i: = A_{-1}\tilde w^{i}_{-1}$, $i = 1,2$.

\item
Solve for $\tilde w_{[0,N]}$ and {$\kappa$} the least squares problem
\begin{align}\label{al2}
\tilde w_{n+1} &= {e^{-\Lambda^\text{right}}} A_n \tilde w_n,\quad n =
0,\dots,N-1,\\
\tilde w_0 + {\kappa} p^1 + p^2 &= 0,\label{al3}\\
& \text{such that} \ \norm{(\tilde w_0,\ldots, \tilde w_{N},{\kappa})}_2 \ \text{is minimised}.\notag
\end{align}
Then $\tilde w_0$ is our approximation of $w_2(x) \in W_2(x)$.
\end{enumerate}
\end{algorithm}

The unique bounded solutions on $\Z$ of these two steps satisfy
$p^1,p^2 \in \range(P_0^{3,u})$
and these vectors are generically linear independent.
Furthermore $\tilde w_0 \in \range(P_0^{2,s})$ due to \eqref{al2} and $\tilde w_0
\in \range(P_0^{3,u})$ due to \eqref{al3}. Thus $\tilde w_0 \in
\range(P_0^{3,u}) \cap \range(P_0^{2,s}) = W_0^2$.

Note that \eqref{al1} has the form

\[B \vect{\tilde w} = \vect r, \quad \text{with } B \in M_{2dN,d(2N+1)}(\R), \ \vect r \in \R^{2dN},\]
where
\[
B=
\begin{pmatrix}
-{e^{-\Lambda^\text{left}}} A_{-N} & I \\
& \ddots & \ddots \\
&& -{e^{-\Lambda^\text{left}}}A_{N} & I\\
\end{pmatrix},\ \vect{\tilde w} = \begin{pmatrix} \tilde w_{-N} \\ \vdots \\ \tilde w_{N-1}\end{pmatrix},
\]
and the $n$th entry of $\vect r$ is the vector $\delta_{n,-1}r^i \in \R^d$ for $i=1,2$.

The least squares solution can be obtained, using the
Moore-Penrose inverse:
\[
\vect{\tilde w} = B^+ \vect r,\quad \text{where } B^+ = B^T(BB^T)^{-1},
\]
cf.\ \cite{sst72}. Numerically, we find $\vect{\tilde w}$ by solving
the linear system $BB^T y = \vect r$; then $\vect{\tilde w} = B^T y$.

\begin{figure}[!h]
  \begin{lstlisting}[label=lst:ed1, caption = Sample MATLAB code for Algorithm \ref{alg:dichproj1}]
% step 1
B = zeros(2*N*dim,2*(N+1)*dim);
for i = 1:2*N
  B(dim*(i-1)+1:dim*i,dim*(i-1)+1:dim*i) 
   = -exp(-Lambda_left)*A(:,dim*(i-1)+1:dim*i);
  B(dim*(i-1)+1:dim*i,dim*(i)+1:dim*(i+1)) 
   = eye(dim);
end
R = zeros(2*dim*N,2);
R(dim*(N-1)+1:dim*N,:) = rand(dim,2);
y = (B*B')\R;
u = B'*y;
p1 = A(:,dim*(N-1)+1:dim*N)*u(dim*(N-1)+1:dim*N,1);
p2 = A(:,dim*(N-1)+1:dim*N)*u(dim*(N-1)+1:dim*N,2);
p1 = v1/norm(v1); v2 = v2/norm(v2);
% step 2
B = zeros(dim*(N+1),dim*(N+1)+1);
for i = 0:N-1
  B(dim*i+1:dim*(i+1),dim*i+1:dim*(i+1)) 
   = -exp(-Lambda_right)*A(:,dim*(i+N)+1:dim*(i+N+1));
  B(dim*i+1:dim*(i+1),dim*(i+1)+1:dim*(i+2)) 
   = eye(dim);
end
B(dim*N+1:dim*(N+1),1:dim) = eye(dim);
B(dim*N+1:dim*(N+1),dim*(N+1)+1) = p1;
R = zeros(dim*(N+1),1);
R(dim*N+1:dim*(N+1),1) = -p2;
y = (B*B')\R;
u = B'*y;
w2 = u(dim*N+1:dim*(N+1))/norm(u(dim*N+1:dim*(N+1)));
  \end{lstlisting}
\end{figure}

Note that in the unlikely case where $p^1 \in W^2_0$, Algorithm
\ref{alg:dichproj1} fails.
An alternative approach for computing vectors in $W_0^2$ that avoids this
problem is
introduced in Algorithm \ref{A2}. The main idea of this algorithm
is to take a random vector
$r$, project it first to $\range(P_0^{3,u})$ and then eliminate
components in the wrong subspaces, by projecting with $P_0^{2,s}$.

\begin{algorithm}[An alternate Dichotomy Projector approach\label{A2}]
\quad
\begin{enumerate}
\item Again, suppose $N\in \N$ and consider $n\in [-N,N]\cap\Z$ and let $A_n = A(T^nx)$ as above. Solve the least squares problem
\begin{align}\label{step1}
\tilde w_{n+1} &= {e^{-\Lambda^\text{left}}}A_n \tilde w_n +\delta_{n,-1} r,\quad n =
-N,\dots,N-1,\\
&\text{such that} \ \|(\tilde w_{-N},\ldots,\tilde w_N)\|_2 \ \text{is minimised},\notag
\end{align}
where $r$ is chosen at random, and define
$r' = A_{-1} \tilde w_{-1}$.

\item Solve the least squares problem
\begin{align}\label{step2}
  \tilde w'_{n+1} &= {e^{-\Lambda^\text{right}}} A_n \tilde w'_n + \delta_{n,-1} r',\quad n =
-N,\dots,N-1,\\
&\text{such that} \ \|(\tilde w'_{-N},\ldots,\tilde w'_N)\|_2 \ \text{is minimised}.\notag
\end{align}
Then $\tilde w'_0$ is our approximation of $w_2(x) \in W_2(x)$.
\end{enumerate}
\end{algorithm}

The solution $w_0$ on $\Z$ of these two steps satisfies
$
w_0 = P_0^{2,s} P_0^{3,u} r \in \range(P_0^{2,s}) \cap
\range(P_0^{3,u}) = W^2_0$.

\begin{figure}[hbt]
  \begin{lstlisting}[label=lst:ed2, caption = Sample MATLAB code for the second step of Algorithm \ref{A2}]
B = zeros(2*N*dim,2*(N+1)*dim);
for i = 1:2*N
  B(dim*(i-1)+1:dim*i,dim*(i-1)+1:dim*i) 
   = -exp(-Lambda_right)*A(:,dim*(i-1)+1:dim*i);
  B(dim*(i-1)+1:dim*i,dim*i+1:dim*(i+1)) = eye(dim);
end
R = zeros(2*dim*N,1);
R(dim*(N-1)+1:dim*N,:) = p1;
y = (B*B')\R;
u = B'*y;
w2 = u(dim*N+1:dim*(N+1))/norm(u(dim*N+1:dim*(N+1)));
  \end{lstlisting}
\end{figure}

\subsection{Error estimate}\label{S3.1}

We give an error estimate for the solution of Algorithm \ref{A2}
for a finite choice of $N$.
Details on deriving this
estimate are postponed to a forthcoming publication.

For ${\Lambda^\text{left}}$ and ${\Lambda^\text{right}}$ close to the boundary of the second
Sacker-Sell spectral interval, we denote the dichotomy rates of
\begin{equation}
w_{n+1} = {e^{-\Lambda^\text{left}}} A_n w_n,\quad
w_{n+1} = {e^{-\Lambda^\text{right}}} A_n w_n,\quad n \in \Z
\label{eq:difference_equations}
\end{equation}
by $(\alpha^{\ell,s}, \alpha^{\ell,u})$
and $(\alpha^{r,s}, \alpha^{r,u})$,
respectively. Let $w_0$ be the solution of Algorithm
\ref{A2} on $\Z$ and let $\tilde w_0$ be its approximation for a
finite choice of $N$.
Careful estimates show that the approximation error
in the ``wrong subspace" $\mathcal{R}(Q)$,
with $Q :=I-P_0^{2,s} P_0^{3,u}$ is given as
\begin{equation}\label{eq:error_estimate}
\|Q(w_0 - \tilde w_0)\| \le C N (e^{-\alpha^{\ell,s} N} + e^{-\alpha^{r,u}N}),
\end{equation}
where the constant $C>0$ does not depend on $N$.

\comments{Note that the dichotomy rates depend on the choice of $\Lambda^\text{left}$
and $\Lambda^\text{right}$. More precisely, $\alpha^{\ell,s}$ is zero if
$\Lambda^\text{left}$ lies at the left boundary of the resolvent interval
$R_3$ and $\Lambda^\text{left}$ is maximal at the right boundary of $R_3$.
On the other hand, $\alpha^{r,u}$ is zero at the right boundary of
$R_2$ and maximal at its left boundary.}

The exponential dichotomy rates $\alpha^{\ell,s}$ and $\alpha^{r,u}$ of the difference equations \eqref{eq:difference_equations} depend on the choice of $\Lambda^\text{left}$ and $\Lambda^\text{right}$ in the following way:  for $\Lambda^\text{left}$ in the resolvent set $R_3=[\lambda_3^+,\lambda_2^-]$ the difference equation
\[ w_{n+1} = e^{-\Lambda^\text{left}} A_n w_n, \quad n\in \Z\]
has an exponential dichotomy with stable dichotomy rate $\alpha^{\ell,s}$ for all $\alpha^{\ell,s}$ with
\[0 < \alpha^{\ell,s} < \Lambda^\text{left} - \lambda^+_3. \]
Similarly, for $\Lambda^\text{right}$ in the resolvent set $R_2 = [\lambda_2^+,\lambda_1^-]$ the difference equation
\[ w_{n+1} = e^{-\Lambda^\text{right}}A_n w_n, \quad n \in \Z\]
has an exponential dichotomy with unstable dichotomy rate $\alpha^{r,u}$ for all $\alpha^{r,u}$ with
\[ 0 < \alpha^{r,u} < \lambda_1^- - \Lambda^\text{right}. \]
Note that both of the above inequalities are strict.

Inspecting equation \eqref{eq:error_estimate}, we get the best (smallest) maximal error if we choose $\Lambda^\text{left} \in R_3$ and $\Lambda^\text{right}\in R^2$ so as to maximise $\alpha^{\ell,s}$ and $\alpha^{r,u}$. Consequently, we get the best numerical approximations, if
${\Lambda^\text{left}}$ and ${\Lambda^\text{right}}$ are chosen close to, but not equal to, the boundary of the common spectral interval ${[\lambda_2^-,\lambda_2^+]}$.
\section{The Ginelli and Wolfe schemes\label{sec:gin_wolfe}}
\subsection{The Ginelli scheme}
\label{sec:ginelli}
The Ginelli Scheme was first presented by Ginelli \emph{et al.\ } in \cite{Ginelli2007} as a method for accurately computing the covariant Lyapunov vectors of an orbit of an invertible differentiable dynamical system where the $A(x)=DT(x)$ are the Jacobian matrices of the flow or map.

Estimates of the $W_j(x)$ are found by constructing equivariant subspaces $S_j(x)=W_1(x)\oplus\cdots\oplus W_j(x)$ and filtering the invariant directions contained therein using a power method on the inverse system restricted to the subspaces $S_j(x)$.

To construct the subspaces $S_j(x)$ we utilise the notion of the stationary Lyapunov basis \cite{Ershov1998}.  Choose $j$ orthonormal vectors $s_1(T^{-n}x), s_2(T^{-n}x),\ldots, s_j(T^{-n}x)$, $n\geq 1$, such that $s_i(T^{-n}x) \notin V_{j+1}(T^{-n}x)$ for $1\le i\le j$ and construct
\begin{align}
\tilde s^{(n)}_i(x) = A(T^{-n}x,n)s_i(T^{-n}x), \quad i=1,\ldots,j\notag.
\end{align}
Using the Gram-Schmidt procedure, construct the orthonormal basis $\{s_1^{(n)}(x),\ldots, s_j^{(n)}(x)\}$ from $\{\tilde s_1^{(n)}(x), \ldots, \tilde s_j^{(n)}(x)\}$, that is,
\begin{align}
  s_1^{(n)}(x) & = \frac{1}{\norm{\tilde s_1^{(n)}(x)}}\tilde s_1^{(n)}(x), \notag\\
  s_2^{(n)}(x) & = \frac{1}{\norm{\left(\tilde s_2^{(n)}(x) - \left(\tilde s_2^{(n)}(x)\cdot s_1^{(n)}(x)\right)s_1^{(n)}(x)\right)}}\left(\tilde s_2^{(n)}(x) - \left(\tilde s_2^{(n)}(x)\cdot s_1^{(n)}(x)\right)s_1^{(n)}(x) \right),  \label{eq:QR_def} \\
& \vdots \notag
\end{align}
Then as $n\to \infty$ the basis $\{s_1^{(n)}(x) , \ldots, s_j^{(n)}(x)\}$ converges to a set of orthonormal vectors $\{s_1^{(\infty)}(x),\ldots,s_j^{(\infty)}(x)\}$ which span the $j$ fastest expanding directions of the cocycle $A$ \cite{Ershov1998}, that is, if the multiplicities $m_1 = \cdots = m_j = 1$
\begin{align}
  S_j(x):=\spn\left\{s^{(\infty)}_1(x),\ldots,s^{(\infty)}_j(x)\right\} & = W_1(x)\oplus \cdots \oplus W_j(x) \notag \\
& = V_1(x) \backslash V_{j+1}(x). \label{eq:subspaces}
\end{align}
If the Oseledets subspaces are not all one-dimensional, that is the Lyapunov spectrum is degenerate, then we choose $S_j(x)$ only for those $j$ which are the sum of the first $k$ multiplicities, i.e., $j= m_1 + \cdots +m_k$.  Then
\begin{align}
  S_j(x) & = W_1(x) \oplus \cdots \oplus W_k(x) \notag \\
& = V_1(x)\backslash V_{k+1}(x). \notag
\end{align}
In the interest of readability we assume the Oseledets subspaces are one-dimensional but note that the approach may be extended to the multi-dimensional case.

  Note that the $S_j(x)$ are equivariant by construction:
\begin{align}
  A(x,n) S_j(x) = S_j(T^nx) \notag
\end{align}
provided $j \leq m_1 + \cdots + m_{\ell-1}$ if $\lambda_\ell = -\infty$.

We describe the Ginelli approach to finding $W_2(x)$.
Suppose $\dim W_1(x) = \dim W_2(x) = 1$ and $\lambda_1 > \lambda_2 > -\infty$ and that the basis $\{s_1^{(\infty)}(x),s_2^{(\infty)}(x)\}$ is known at $x\in X$.  Note first that $\spn\left\{s_1^{(\infty)}(x)\right\} = W_1(x)$.  Let $c(x) \in \mathbb R^2$ denote the coefficients of $w_2(x)\in W_2(x)$ in the basis $\{s_1^{(\infty)}(x), s_2^{(\infty)}(x)\}$ (recall that the orthogonal projection of $w_2(x)$ onto $s_i^{(\infty)}(x)$ is zero for $i=3,4,\ldots,d$)

 then
\begin{align*}
  w_2(x) = c_1(x)s_1^{(\infty)}(x) + c_2(x) s_2^{(\infty)}(x). \notag
\end{align*}


\begin{lemma}\label{lem:QR_SLB}
Let $Q(x)$ denote the $d\times 2$ matrix whose $i$th column is $s_i^{(\infty)}(x)$.
  Then for each $n \geq 0$ there exists an upper triangular, $2 \times 2$ matrix $R(x,n)$ satisfying
  \begin{equation}
    A(x,n)Q(x) = Q(T^nx)R(x,n). \label{eq:QR}
    \end{equation}
\end{lemma}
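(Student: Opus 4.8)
The plan is to read the matrix $R(x,n)$ off the two structural features of the stationary Lyapunov basis: the subspaces $S_j(x)=\spn\{s_1^{(\infty)}(x),\ldots,s_j^{(\infty)}(x)\}$ are \emph{nested}, $S_1(x)\subset S_2(x)$, and \emph{equivariant} under the cocycle. For the equivariance I would start from \eqref{eq:subspaces}, which identifies $S_j(x)=W_1(x)\oplus\cdots\oplus W_j(x)$, and combine item 3 of the MET ($A(x)W_i(x)\subset W_i(Tx)$) with the cocycle identity $A(x,m+n)=A(T^mx,n)A(x,m)$; a short induction then gives $A(x,n)W_i(x)\subset W_i(T^nx)$, hence $A(x,n)S_j(x)\subset S_j(T^nx)$ for $j=1,2$ and every $n\ge 0$.

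Next I would push forward the columns of $Q(x)$ one at a time. Since $s_1^{(\infty)}(x)$ spans $S_1(x)$ and $A(x,n)S_1(x)\subset S_1(T^nx)=\spn\{s_1^{(\infty)}(T^nx)\}$, there is a scalar $R_{11}(x,n)$ with
\[
A(x,n)\,s_1^{(\infty)}(x)=R_{11}(x,n)\,s_1^{(\infty)}(T^nx).
\]
Since $\{s_1^{(\infty)}(x),s_2^{(\infty)}(x)\}$ spans $S_2(x)$ and $A(x,n)S_2(x)\subset S_2(T^nx)=\spn\{s_1^{(\infty)}(T^nx),s_2^{(\infty)}(T^nx)\}$, and the two spanning vectors at $T^nx$ are orthonormal hence independent, there are unique scalars $R_{12}(x,n),R_{22}(x,n)$ with
\[
A(x,n)\,s_2^{(\infty)}(x)=R_{12}(x,n)\,s_1^{(\infty)}(T^nx)+R_{22}(x,n)\,s_2^{(\infty)}(T^nx).
\]
Writing these two identities column by column is exactly $A(x,n)Q(x)=Q(T^nx)R(x,n)$ of \eqref{eq:QR} with
\[
R(x,n)=\begin{pmatrix} R_{11}(x,n) & R_{12}(x,n)\\ 0 & R_{22}(x,n)\end{pmatrix},
\]
which is upper triangular, as claimed.

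There is no real obstacle: the lemma is bookkeeping built on nestedness plus equivariance of the $S_j$, which is why the surrounding text states it without fanfare as the basis for the QR-type recursion used later. The only step warranting a line of justification is the inclusion $A(x,n)S_j(x)\subset S_j(T^nx)$ — that the image of $S_j(x)$ stays inside $S_j(T^nx)$ rather than merely inside $V_1(T^nx)$ — which is the equivariance content of item 3 of the MET and implicitly uses $\lambda_2>-\infty$ so that $A$ does not collapse the top two Oseledets directions. If invertibility of $R(x,n)$ is also wanted for the downstream recursion, one adds the remark that $R_{11}(x,n),R_{22}(x,n)\neq 0$ by the same non-degeneracy (equivalently, because $A(x,n)$ restricted to $S_2(x)$ is injective when $\lambda_2>-\infty$).
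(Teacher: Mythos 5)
Your proof is correct and follows essentially the same route as the paper: the upper-triangular structure of $R(x,n)$ is read off from the nesting $S_1(x)\subset S_2(x)$ and the equivariance inclusion $A(x,n)S_j(x)\subset S_j(T^nx)$, which you derive explicitly from item~3 of the multiplicative ergodic theorem together with the cocycle identity. The paper additionally records the entries of $R(x,n)$ as Gram--Schmidt coefficients, whereas you argue existence abstractly; this is purely a presentational difference, though it does let you sidestep a small slip in the paper's $(2,2)$ entry, which should be $\ip{s_2^{(\infty)}(T^nx)}{A(x,n)s_2^{(\infty)}(x)}$ (the length of the component of $A(x,n)s_2^{(\infty)}(x)$ orthogonal to $s_1^{(\infty)}(T^nx)$) rather than $\norm{A(x,n)s_2^{(\infty)}(x)}$.
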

\begin{proof}
Note that
\begin{align}
  A(x,n)Q(x) & = A(x,n) \begin{pmatrix} | & | \\ s_1^{(\infty)}(x) & s_2^{(\infty)}(x) \\ | & | \end{pmatrix} \notag \\
& = \begin{pmatrix} | & | \\ A(x,n)s_1^{(\infty)}(x) & A(x,n)s_2^{(\infty)}(x) \\ | & | \end{pmatrix} \notag \\
& = Q(T^nx) R(x,n) \notag
\end{align}
where
\begin{align}
  Q(T^nx) & =
  \begin{pmatrix}
    | & | \\
    s_1^{(\infty)}(T^nx) & s_2^{(\infty)}(T^nx) \\
    | & |
  \end{pmatrix} \notag
\end{align}
and
\begin{align}
  R(x,n) & =
  \begin{pmatrix}
    \norm{A(x,n)s_1^{(\infty)}(x)} & \ip{s_1^{(\infty)}(T^nx)}{A(x,n)s_2^{(\infty)}(x)} \\
    0 & \norm{A(x,n)s_2^{(\infty)}(x)}
  \end{pmatrix}, \label{eq:R(x,n)}
\end{align}
using the equivariance of $S_1(x) = \spn\{s_1^{(\infty)}(x)\}$ and $S_2(x) = \spn\{s_1^{(\infty)}(x),s_2^{(\infty)}(x)\}$.
\end{proof}
Thus, the QR-decomposition of Lemma \ref{lem:QR_SLB} is equivalent to the Gram-Schmidt orthonormalisation that defines the stationary Lyapunov bases.  The columns of $Q(T^nx)$ form the stationary Lyapunov basis at $T^nx$.

%

We have chosen the above notation $R(x,n)$ specifically since, defined in this way, $R$ forms a cocycle which is the restriction of $A$ to the invariant subspaces $S_j$.

\begin{lemma}
  The matrix $R(x,n)$ defined above forms a cocycle over $T$.
\end{lemma}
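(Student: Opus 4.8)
The plan is to verify the cocycle identity $R(x,n+m) = R(T^nx,m)R(x,n)$ directly from the defining relation \eqref{eq:QR} of Lemma \ref{lem:QR_SLB}, using the cocycle property of $A$ over $T$, namely $A(x,n+m) = A(T^nx,m)A(x,n)$. First I would write out $A(x,n+m)Q(x)$ in two ways. On one hand, applying \eqref{eq:QR} with the time step $n+m$ gives $A(x,n+m)Q(x) = Q(T^{n+m}x)R(x,n+m)$. On the other hand, I can split the product: $A(x,n+m)Q(x) = A(T^nx,m)\bigl(A(x,n)Q(x)\bigr) = A(T^nx,m)Q(T^nx)R(x,n)$, and then apply \eqref{eq:QR} again, now based at the point $T^nx$ with time step $m$, to rewrite $A(T^nx,m)Q(T^nx) = Q(T^{n+m}x)R(T^nx,m)$. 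Combining the two computations yields $Q(T^{n+m}x)R(x,n+m) = Q(T^{n+m}x)R(T^nx,m)R(x,n)$.

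The remaining step is to cancel $Q(T^{n+m}x)$ on the left. Since the columns of $Q(T^{n+m}x)$ are the orthonormal stationary Lyapunov vectors $s_1^{(\infty)}(T^{n+m}x), s_2^{(\infty)}(T^{n+m}x)$, the $d\times 2$ matrix $Q(T^{n+m}x)$ has full column rank, so $Q(T^{n+m}x)^*Q(T^{n+m}x) = I_2$; left-multiplying by $Q(T^{n+m}x)^*$ gives $R(x,n+m) = R(T^nx,m)R(x,n)$, which is precisely the cocycle property. One should also record the trivial normalization $R(x,0) = I_2$, which is immediate from \eqref{eq:QR} at $n=0$ together with the injectivity of $Q(x)$.

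I do not anticipate a genuine obstacle here; the only point requiring a moment's care is justifying the left-cancellation of $Q(T^{n+m}x)$, which is not a square matrix — this is handled by orthonormality of its columns as above rather than by invertibility. A secondary bookkeeping point is that \eqref{eq:QR} as stated in Lemma \ref{lem:QR_SLB} is asserted for $n\ge 0$, so the argument gives the cocycle identity for nonnegative times; extending to $\Z$ would additionally use invertibility of $A$ and the equivariance of the $S_j$ in backward time, but for the purposes of this lemma the forward statement suffices and the identity $R(x,n+m)=R(T^nx,m)R(x,n)$ for $n,m\ge 0$ is all that is needed.
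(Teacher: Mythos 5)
Your argument is exactly the paper's proof: expand $A(x,n+m)Q(x)$ two ways via the cocycle property of $A$ and the QR relation of Lemma \ref{lem:QR_SLB}, then cancel $Q(T^{n+m}x)$ by left-invertibility (orthonormal columns). The additional remarks about $R(x,0)=I$ and the restriction to $n,m\ge 0$ are correct and consistent with what the paper tacitly assumes.
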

\begin{proof}
 Let $n,m \ge 0$ then
 \begin{align}
   A(x,n+m)Q(x) & = Q(T^{n+m}x)R(x,n+m) \label{eq:lem_R_1}
 \end{align}
by Lemma \ref{lem:QR_SLB}.  Since $A(x,n+m) = A(T^nx,m)A(x,n)$,
\begin{align}
  A(x,n+m)Q(x) & = A(T^nx,m)A(x,n)Q(x) \notag \\
  & = A(T^nx,m)Q(T^nx)R(x,n) \notag \\
  & = Q(T^{n+m}x)R(T^nx,m)R(x,n). \label{eq:lem_R_2}
\end{align}
Equating \eqref{eq:lem_R_1} and \eqref{eq:lem_R_2} gives
\begin{align}
  R(T^nx,m)R(x,n) = R(x,n+m), \notag
\end{align}
as $Q(T^{n+m}x)$ is left-invertible.
\end{proof}

Since $c(x)$ is the vector of coefficients of the second Oseledets vector of the cocycle $A$, it is the second Oseledets vector of the cocycle $R$.  To see this, recall $w_2(x) = Q(x)c(x) \in W_2(x)$ so that
\begin{align}
  \lambda_2 = \lim_{n\to \infty}\frac{1}{n}\log\norm{A(x,n)Q(x)c(x)} \notag
\end{align}
which, due to \eqref{eq:QR}, becomes
\begin{align}
  \lambda_2 &= \lim_{n\to\infty}\frac{1}{n}\log\norm{Q(T^nx)R(x,n)c(x)} \notag \\
& = \lim_{n\to\infty} \frac{1}{n}\log\norm{R(x,n)c(x)} \notag
\end{align}
since the columns of $Q(T^nx)$ are orthonormal.

We may approximate $c(x)$ numerically using a simple power method on the inverse cocycle $R^{-1}$ (which exists since $\lambda_1 > \lambda_2 > -\infty$).

The Ginelli method can be summarised by the following steps:
\begin{algorithm}[Ginelli method of approximating $w_2(x)\in W_2(x)$\label{alg:ginelli}]\quad
\begin{enumerate}
\item Choose $x\in X$ and $M>0$ and form $\{s_1^{(M)}(x),s_2^{(M)}(x)\}$ by first randomly selecting two orthonormal vectors $\{s_1(T^{-M}x),s_2(T^{-M}x)\}$ then performing the push-forward/Gram-Schmidt procedure given by \eqref{eq:QR_def}.  That is, define
\[ \tilde s_i^{(M)}(x) = A(T^{-M}x,M)s_i(T^{-M}x), \quad i = 1,2,\]
followed by setting
\begin{align*}
s_1^{(M)}(x) &= \mathcal N\left(\tilde s_1^{(M)}(x)\right), \\
s_2^{(M)}(x) & = \mathcal N \left(\tilde s_2^{(M)}(x) - \left(\tilde s_2^{(M)}(x)\cdot s_1^{(M)}(x)\right)s_1^{(M)}(x)\right),
\end{align*}
where $\mathcal N:v\mapsto v/\norm{v}$. The vectors $\{s_1^{(M)}(x),s_2^{(M)}(x)\}$ form an approximation to the stationary Lyapunov basis $\{s_1^{(\infty)}(x),s_2^{(\infty)}(x)\}$.

\item \label{item:ginelli_step_2}Choose $N>0$ and using the approximate basis $\{s_1^{(M)}(x),s_2^{(M)}(x)\}$ in \eqref{eq:R(x,n)}, form an approximation to $R(x,N)$, denoted by $R^{(M)}(x,N)$.
\item Choose $c'\in \mathbb R^2$ either at random, or by some guess at the second Oseledets vector of $R$ at $T^N(x)\in X$, in this review we found $c'=(0,1)$ to work well. Use the inverse iteration method to approximate $c(x)$, that is, define our approximation to $c(x)$ as
  \begin{align}
    c^{(M,N)}(x)&  = R^{(M)}(x,N)^{-1}c' \notag\\
    & = R^{(M)}(T^Nx,-N)c' \notag
  \end{align}
\item Then
  \begin{align*}
    w_2^{(M,N)}(x)=
    \begin{pmatrix}
      | & |\\
      s_1^{(M)}(x) & s_2^{(M)}(x)\\
      | & |
    \end{pmatrix}c^{(M,N)}(x)
  \end{align*}
is our approximation to $w_2(x)\in W_2(x)$.
\end{enumerate}
\end{algorithm}

As before, there is some freedom of choice of both $M$ and $N$ as well as of the initial orthonormal basis $\{s_1(T^{-M}x),s_2(T^{-M}x)\}$, used to approximate $S_2(x)$, and of the 2-tuple $c'$.  The larger $M$ and $N$ are chosen, the more accurate $w^{(M,N)}_2(x)$ will be, provided $s_2(T^{-M}x) \notin W_1(T^{-M}x)\cup V_3(T^{-M}x)$ and $c' \notin E_1(T^Mx)$ where $E_1$ is the Oseledets subspace of $R$ with Lyapunov exponent $\lambda_1$.

\begin{figure}[hbt]
  \begin{lstlisting}[label=lst:ginelli, caption = Sample MATLAB code for Algorithm \ref{alg:ginelli} ]
[Q,~] = qr(rand(dim,j),0);c = [zeros(1,j-1) 1];
for i = 1:N,
  QNew = A(:,(i-1)*dim+1:i*dim)*Q;
  [Q,~] = qr(QNew,0);
end
Q0 = Q;
for i = N+1:2*N+1,
  QNew = A(:,(i-1)*dim +1: i*dim)*Q;
  [Q,R] = qr(QNew,0);
  AllR = horzcat(R,AllR);
end
numOfR = size(AllR,2)/j;
for i = 1:numOfR
  R = AllR(:,(i-1)*j+1:i*j);
  cNew = R\c;
  c = cNew/norm(cNew);
end
w = Q0*c;
  \end{lstlisting}
\end{figure}

Listing \ref{lst:ginelli} shows an example implementation of Algorithm \ref{alg:ginelli} in MATLAB which approximates $w_j(x) \in W_j(x)$ using $M = N$ and $s_1(T^{-M}x),\ldots,s_j(T^{-M}x)$ are chosen at random and $c'=(\underbrace{0,\ldots,0}_{j-1 \text{ entries}},1)$.  Lines 2 through 6 construct the approximation of the stationary Lyapunov basis, $\left\{s_1^{(M)}(x),\ldots,s_j^{(M)}(x)\right\}$ which is stored as columns of the matrix $Q(x)$ represented as the variable \vbl{Q0}, while lines 7 through 11 construct the cocycle $R$ stored in \vbl{AllR} as $[ R(x) \ | \ R(Tx) \ | \cdots | \ R(T^Nx)]$.  Lines 12 through 17 perform a simple power method on $R(x,N)^{-1}$ to find the coefficient vector $c$, which represents the approximation of $w_j(x)$ in the basis $\left\{s_1^{(M)}(x),\ldots,s_j^{(M)}(x)\right\}$.  Thus, the approximation is given by $Q(x)c$.  Although Algorithm \ref{alg:ginelli} is specific to the case where $j=2$, Listing \ref{lst:ginelli} is applicable to any $j$ for which $R(x,N)^{-1}$ exists.

It can be shown that, in this case where the top Lyapunov exponent has multiplicity 1, $E_1 = \spn\{(1,0)^T\}$.

\begin{lemma}
If the first Lyapunov exponent has multiplicity 1, the dominant Oseledets subspace of the cocycle $R$ is $E_1 = \spn\{(1,0)^T\}$.
\end{lemma}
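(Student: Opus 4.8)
The plan is to use Lemma~\ref{lem:QR_SLB}. The matrix $Q(x)$ is an isometry of $\R^2$ onto the equivariant bundle $S_2(x)=W_1(x)\oplus W_2(x)$, and the relation $A(x,n)Q(x)=Q(T^nx)R(x,n)$ together with the orthonormality of the columns of $Q(T^nx)$ gives $\norm{R(x,n)v}=\norm{A(x,n)Q(x)v}$ for every $v\in\R^2$ and $n\ge 0$. Hence $Q$ conjugates the cocycle $R$ to the restriction of $A$ to $S_2$, isometrically and equivariantly (using also $\lambda_1>\lambda_2>-\infty$, so that both cocycles are invertible), and therefore the Oseledets decomposition of $R$ is the $Q(x)^{-1}$-image of the Oseledets decomposition $S_2(x)=W_1(x)\oplus W_2(x)$ of $A|_{S_2}$; in particular $E_i(x)=Q(x)^{-1}W_i(x)$, with the same exponents $\lambda_1>\lambda_2$, and the multiplicity of $\lambda_1$ for $R$ equals $\dim W_1=1$.

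It then remains only to identify $Q(x)^{-1}W_1(x)$. Because the first Lyapunov exponent has multiplicity $1$ we have $S_1(x)=\spn\{s_1^{(\infty)}(x)\}=W_1(x)$, and $s_1^{(\infty)}(x)$ is exactly the first column of $Q(x)$, so $W_1(x)=Q(x)\,\spn\{(1,0)^T\}$; injectivity of $Q(x)$ then gives $E_1=Q(x)^{-1}W_1(x)=\spn\{(1,0)^T\}$. One can also see this without the conjugacy language: since $R(x,n)$ is upper triangular, $R(x,n)(1,0)^T=\norm{A(x,n)s_1^{(\infty)}(x)}\,(1,0)^T$, so $\spn\{(1,0)^T\}$ is a one-dimensional $R$-equivariant subbundle, and $\tfrac1n\log\norm{R(x,n)(1,0)^T}=\tfrac1n\log\norm{A(x,n)s_1^{(\infty)}(x)}\to\lambda_1$ by the defining growth property of the Oseledets subspace $W_1$.

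I do not anticipate a genuine obstacle; the argument is bookkeeping once one records that $S_1(x)=W_1(x)$ and that $R$ is the coordinate expression of $A|_{S_2}$ in the orthonormal frame $Q$. The one place deserving a word of care is the assertion that the Oseledets splitting transfers under the isometric conjugacy $Q$, equivalently that a one-dimensional $R$-equivariant subbundle on which the forward exponent is the top exponent $\lambda_1$ must be $E_1$; this follows from the uniqueness of the equivariant splitting into subbundles with distinct Lyapunov exponents. Alternatively, since the scalar $R_{11}$ is multiplicative along the cocycle, $\log R_{11}(x,n)$ is a Birkhoff sum, and Birkhoff's theorem for $T$ and for $T^{-1}$ shows the backward exponent of $(1,0)^T$ equals $-\lambda_1$, which likewise pins $(1,0)^T$ inside $E_1$ and hence, as $\dim E_1=1$, yields $E_1=\spn\{(1,0)^T\}$.
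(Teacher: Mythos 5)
Your proposal is correct, and your second ``without the conjugacy language'' argument is in fact exactly the paper's own proof: the paper writes an arbitrary $s\in S_1(x)=W_1(x)$ as $Q(x)(a,0)^T$, uses Lemma~\ref{lem:QR_SLB} and the orthonormality of $Q(T^nx)$ to get $\lambda_1=\lim_{n\to\infty}\tfrac1n\log\norm{R(x,n)(a,0)^T}$, and then invokes $R$-invariance of $\spn\{(1,0)^T\}$ to conclude. Your first route, phrasing things as an isometric conjugacy $Q$ between $R$ and $A|_{S_2}$ so that $E_i(x)=Q(x)^{-1}W_i(x)$, is a mild repackaging of the same observation but is conceptually cleaner, and it makes visible why $E_1$ inherits both the exponent and the dimension from $W_1$. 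You also correctly flag a step the paper leaves implicit: exhibiting an $R$-equivariant line with forward exponent $\lambda_1$ does not by itself identify it with $E_1$ unless one invokes the uniqueness of the measurable equivariant splitting (or, as you note, a Birkhoff argument for $T^{-1}$ showing the backward exponent is $-\lambda_1$). Your handling of that point closes the small gap the paper glosses over, so your write-up is, if anything, slightly more careful than the original.
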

\begin{proof}
  Recall that $s_1^{(\infty)}(x) \in W_1(x)$ since $\spn\left\{s_1^{(\infty)}(x)\right\} = S_1(x) = V_1(x) \backslash V_2(x)$ (from \eqref{eq:subspaces}) and for all $s \in V_1(x)\backslash V_2(x)$
  \begin{align}
    \lambda_1 & = \lim_{n\to \infty} \frac{1}{n} \log\norm{A(x,n)s}. \notag
  \end{align}
We may write $s = Q(x)(a,0)^T$ for some $a\in \mathbb R$ then
\begin{align}
  \lambda_1 & = \lim_{n\to \infty}\frac{1}{n} \log \norm{A(x,n)Q(x)(a,0)^T} \notag \\
  & = \lim_{n\to\infty}\frac{1}{n}\log\norm{Q(T^nx)R(x,n)(a,0)^T} \notag
  \end{align}
and since $\norm{Q(T^nx)s} = \norm{s}$ (the columns of $Q$ are orthonormal)
\begin{align}
  \lambda_1 & = \lim_{n\to\infty}\frac{1}{n}\log\norm{R(x,n)(a,0)^T}. \notag
\end{align}
Since $S_1$ is $A$-invariant, $\spn\{(1,0)^T\}$ is $R$-invariant and the proof is complete.
\end{proof}

\subsubsection{Limited Data Scenario\label{sec:limited_ginelli}}
In the case where convergence isn't satisfactory because the amount of cocycle data available is too small (for any $M$ and $N$ to be small), the approximations {from Algorithm \ref{alg:ginelli}} can be improved by using better guesses at $s_1(T^{-M}x),s_2(T^{-M}x)$ and $c'$.

Note that $s_1^{(\infty)}(x)$ and $s_2^{(\infty)}(x)$ are two orthonormal vectors optimised for maximal growth over the time interval $[-\infty,0]\cap\Z$. In the situation where the values of $M$ and $N$ are limited, one can choose those two vectors {that} are optimised for growth over the shorter time interval $[-M,0]\cap\Z$. In \cite{Wolfe2007} this is achieved by computing the left singular vectors of $A(T^{-M}x,M)$. This approach works well for small $M$ but can become inaccurate for very large $M$ for the reasons in Remark \ref{svdbad}.

In practice, we have observed that a combination of Step 1 in Algorithm \ref{alg:ginelli} and the above provides the most robust method of accurately approximating $s_1^{(\infty)}(x)$ and $s_2^{(\infty)}(x)$.  As an alternative to Algorithm \ref{alg:ginelli} the following may be used: { For $M\geq M'$, compute vectors optimised for growth from $-M$ to $-M + M'$, then push-forward these vectors from $-M+M'$ to $0$.}

\begin{algorithm}[{ Improved }Algorithm \ref{alg:ginelli}]\quad
\label{alg:ginalt}
\begin{enumerate}
\item \sloppy Choose $x\in X$ and $M \geq M' > 0$.  Compute the two left singular vectors of $A(T^{-M}x,M')$ corresponding to the two largest singular values and call them $\tilde s_1(T^{-M+M'}x)$ and $\tilde s_2(T^{-M+M'}x)$.  Now define $\left\{s_1^{(M,M')}(x),s_2^{(M,M')}(x)\right\}$ as an approximation to $\left\{s_1^{(\infty)}(x),s_2^{(\infty)}(x)\right\}$ by the Gram-Schmidt orthonormalisation of $A(T^{-M+M'}x,M-M')s_1(T^{-M+M'}x)$ and $A(T^{-M+M'}x,M-M')s_2(T^{-M+M'}x)$ as in \eqref{eq:QR_def}.
\end{enumerate}
Steps 2--4 as in Algorithm \ref{alg:ginelli}.
\end{algorithm}

In {practice, one} should choose $M'$ large enough so that enough data is sampled, but not so large that $A(T^{-M}x,M')$ is too singular.

\subsection{The Wolfe scheme}
\label{sec:wolfe}
The approach followed by Wolfe \emph{et al.\ } \cite{Wolfe2007} directly computes the subspace splitting as the intersection of two sets of invariant subspaces.  The description of the {numerical construction of the subspaces $S_j(x)$} featured below differs slightly from \cite{Wolfe2007}, however, the essential features of the approach are retained.  In fact, the constructions featured here improve upon those in \cite{Wolfe2007} in terms of accuracy versus amount of cocycle data used -- in the notation of Algorithm \ref{alg:wolfe} below, if $M_1$ is made larger, $w_2^{(M_1,M_1',M_2)}(x)$ is more accurate, which is not the case in \cite{Wolfe2007} for the same reasons discussed in Remark \ref{svdbad}.

Recall the eigenspace decomposition $U_j(x)$ of the limiting matrix $\Psi(x)$ presented in Section \ref{sec:SVD} and define $V_j(x) = U_j(x)\oplus \cdots \oplus U_\ell(x)$.  Recall that $V_j(x) \supset W_j(x),\allowbreak W_{j+1}(x),\allowbreak\ldots,\allowbreak W_\ell(x)$.  Also recall from the previous section that $S_j(x) \supset W_1(x),W_2(x),\ldots, W_j(x)$.  Thus
\begin{align}
  W_j(x) = V_j(x) \cap S_j(x).\notag
\end{align}

Again, in the interest of readability we assume the Oseledets subspaces $W_j(x)$, and the eigenspaces $U_j(x)$, are one-dimensional.  As in the case of the previous section, the ideas here may be extended to the case in which the Oseledets subspaces are not one-dimensional.  Let $u_j(x)$ be the singular vector spanning $U_j(x)$ and let $s_j(x)=s^{(\infty)}_j(x)$ be the $j$th element of the stationary Lyapunov basis as in the previous section.  Then note
\begin{align}
  w_j(x) & = \sum_{i=1}^j \ip{w_j(x)}{s_i(x)} s_i(x), \notag\\
  w_j(x) & = \sum_{i=j}^d \ip{w_j(x)}{u_i(x)} u_i(x). \notag
\end{align}
Taking inner products with $u_k(x)$ and $s_k(x)$ respectively gives
\begin{align}
  \ip{w_j(x)}{u_k(x)} & = \sum_{i=1}^j\ip{w_j(x)}{s_i(x)}\ip{s_i(x)}{u_k(x)} \text{for $k\geq j$}, \label{eq:wolfe_4} \\
  \ip{w_j(x)}{s_k(x)} & = \sum_{i=j}^d\ip{w_j(x)}{u_i(x)}\ip{u_i(x)}{s_k(x)} \text{for $k\leq j$}. \label{eq:wolfe_3}
\end{align}
Substituting \eqref{eq:wolfe_4} into \eqref{eq:wolfe_3} and rearranging gives
\begin{align}
  \ip{w_j(x)}{s_k(x)} & = \sum_{i=1}^j \left(\sum_{h=j}^d \ip{s_k(x)}{u_h(x)}\ip{u_h(x)}{s_i(x)}\right)\ip{s_i(x)}{w_j(x)}. \label{eq:wolfe_6}
\end{align}
Note that $\sum_{h=1}^d\ip{s_k(x)}{u_h(x)}\ip{u_h(x)}{s_i(x)} = \delta_{ki}$ so
\begin{align}
  \sum_{h=j}^d \ip{s_k(x)}{u_h(x)}\ip{u_h(x)}{s_i(x)} = \delta_{ki} - \sum_{h=1}^{j-1}\ip{s_k(x)}{u_h(x)}\ip{u_h(x)}{s_i(x)}. \notag
\end{align}
Then \eqref{eq:wolfe_6} becomes
\begin{align}
  \ip{w_j(x)}{s_k(x)} & = \sum_{i=1}^j \delta_{ki}\ip{s_i(x)}{w_j(x)} - \sum_{i=1}^j \sum_{h=1}^{j-1}\ip{s_k(x)}{u_h(x)}\ip{u_h(x)}{s_i(x)}\ip{s_i(x)}{w_j(x)} \notag \\
  & = \ip{s_k(x)}{w_j(x)} - \sum_{i=1}^j \sum_{h=1}^{j-1}\ip{s_k(x)}{u_h(x)}\ip{u_h(x)}{s_i(x)}\ip{s_i(x)}{w_j(x)} \notag \\
0 & = \sum_{i=1}^j \sum_{h=1}^{j-1}\ip{s_k(x)}{u_h(x)}\ip{u_h(x)}{s_i(x)}\ip{s_i(x)}{w_j(x)}. \label{eq:wolfe_7}
\end{align}
Equation \eqref{eq:wolfe_7} may be considered as a $j\times j$ homogeneous linear equation by defining a matrix entry-wise as
\begin{align}
  D_{ki} = \sum_{h=1}^{j-1}\ip{s_k(x)}{u_h(x)}\ip{u_h(x)}{s_i(x)} \notag
\end{align}
and solving
\begin{align}
  Dy = 0, \notag
\end{align}
where $y_i = \ip{s_i(x)}{w_j(x)}$.
The entries of $y$ are then the coefficients of $w_j(x)$ with respect to the basis $s_1(x)\ldots,s_j(x)$.

The Wolfe approach may be implemented as follows:
\begin{algorithm}[Improved Wolfe approach to approximating $w_j(x) \in W_j(x)$\label{alg:wolfe}]
\quad
\begin{enumerate}
\item Choose $x\in X$ and $M_1 \geq M_1' > 0$ and construct $\{s_1^{(M_1,M_1')}(x),\ldots,s_j^{(M_1,M_1')}(x)\}$ as an approximation of the stationary Lyapunov basis vectors $\{s_1^{(\infty)}(x),\ldots,s_j^{(\infty)}(x)\}$ using the methods outlined in Step 1 of Algorithm \ref{alg:ginalt}, that is, compute the left singular vectors of $A(T^{-M_1}x,M_1')$ corresponding to the $j-1$ largest singular values and call them $\tilde s_i(T^{-M_1+M_1'}x)$, $i=1,\ldots,j-1$.  Then form the $s_i^{(M_1,M_1')}(x)$ by the Gram-Schmidt orthornormalisation of $A(T^{-M_1+M_1'}x,M_1-M_1')\tilde s_i(T^{-M_1 + M_1'}x)$ for $i=1,\ldots,j-1$.

\item Choose $M_2>0$ and construct the one-dimensional eigenspaces $U_1^{(M_2)}(x),\ldots,\allowbreak U_{j-1}^{(M_2)}(x)$ as approximations to the eigenspaces $U_1(x),\ldots, U_{j-1}(x)$ as in Step 1 of Algorithm \ref{alg:svd}, that is, construct
\[\Psi^{(M_2)}(x) = \left(A(x,M_2)^* A(x,M_2)\right)^{1/2M_2},\]
and let $U^{(M_2)}_i(x)$ be the $i$th orthonormal eigenspace of $\Psi^{(M_2)}(x)$. Define $u_i^{(M_2)}(x) \in U_i^{(M_2)}(x)$, $i=1,\ldots, j-1$.

\item Form the matrix $D$ as above:
  \begin{align}
    D_{ki} = \sum_{h=1}^{j-1}\ip{s_k^{(M_1,M_1')}(x)}{u_h^{(M_2)}(x)}\ip{u_h^{(M_2)}(x)}{s_i^{(M_1,M_1')}(x)}. \notag
  \end{align}
\item Solve the homogeneous linear equation $Dy=0$. Then $w_j^{(M_1,M_1',M_2)} = \sum_{i=1}^j y_i s_i(x)$ forms our approximation of $w_j(x)\in W_j(x)$.
\end{enumerate}
\end{algorithm}

This approach suffers from the same numerical stability issue of Algorithm \ref{alg:svd2} of Section \ref{sec:svd2}.  Namely, the vector spaces $U_1^{(M)}(x),\ldots,U_{j-1}^{(M)}(x)$ may only poorly approximate $U_1(x),\ldots,U_{j-1}(x)$ for $M$ too large (see the final paragraph of Section \ref{sec:svd2}).

\sloppy A recent paper \cite{parlitz2011} provides alternative descriptions of both the Ginelli \emph{et al.} and Wolfe and Samelson methods, well-suited to those familiar with the QR-decomposition based numerical method for estimating Lyapunov exponents due to Benettin \emph{et al.} \cite{benettin1978,benettin1980} and Shimada and Nagashima \cite{shimada1979}.  The discussion in \cite{parlitz2011} is restricted to invertible cocycles generated by the Jacobian matrices of a dynamical system.  Although this assumption allows stable numerical methods to be constructed, i.e., better convergence obtained for larger data sets, it means some important examples in which the matrix cocycle is non-invertible are overlooked, for example the case study of Section \ref{sec:fluidflow}. While the memory footprint of the implementations discussed in \cite{parlitz2011} is estimated, there is no {discussion of convergence rates or accuracy} with respect to the amount of cocycle data available.  Finally, while the examples featured in \cite{parlitz2011} explain the methods presented in the context of differentiable dynamical systems the case studies of Section \ref{sec:numerical} in the present paper focus on comparative performance of the methods presented, via a broad range of possible applications.

\section{Numerical comparisons of the four approaches \label{sec:numerical}}
We present three detailed case studies, comparing the four approaches for calculating Oseledets subspaces.
The first case study is a nontrivial model for which we know  the Oseledets subspaces exactly and can therefore precisely measure the accuracy of the methods.
The second case study produces a relatively low-dimensional matrix cocycle, while the third case study generates a very high-dimensional matrix cocycle;  in these case studies we use two fundamental properties of Oseledets subspaces to assess the accuracy of the four approaches.

\subsection{Case Study 1: An exact model}
In general the Oseledets subspaces cannot be found analytically which makes the task of determining the efficacy of the above approaches difficult. However, the exact model described below allows us to compare the numerical approximations with the exact solution by building a cocycle in which the subspaces are known \emph{a priori}.

We generate a system with simple Lyapunov spectrum $\lambda_1 > \lambda_2 > \cdots > \lambda_d > -\infty$.  We form a diagonal matrix
\begin{align}
  R =
  {\begin{pmatrix}
    e^{\lambda_1} & & &0  \\
    & e^{\lambda_2} &  & \\
    &  & \ddots &  \\
    0 & & & e^{\lambda_d} \\
  \end{pmatrix}}
\notag
\end{align}
and generate the cocycle by the sequence of matrices $\{A_n\}$ where
\begin{align}
  A_n & = S_n R S_{n-1}^{-1}  \notag \\
  S_n & =
  \begin{cases}
    I + \epsilon Z, & \text{ for $n\neq -1, n\in [-N,N]\cap\Z$,}\\
    I +
    \begin{pmatrix}
      0 & & & & \\
      z_2 & 0 & & \\
      & \ddots & \ddots & \\
      & & z_d & 0
    \end{pmatrix},
    & \text{ for $n=-1$.}
  \end{cases} \label{eq:toy_model}
\end{align}
The entries of $Z$ and the numbers $z_2,\ldots, z_d$ are uniformly randomly generated from the interval $[0,1]$.  By construction, the columns of $S_{n-1}$ span the Oseledets subspaces at time $n\in [-N,N]\cap\Z$.

We compare the exact result at time $n=0$ with the approximations computed by the various algorithms for $d=8$, {$\{\lambda_1,\ldots,\lambda_8\} = \{\log 8, \log 7, \log 6,\ldots,\log 1\}$} and $\epsilon = 0.1$ for varying amounts of cocycle data $\left\{A(T^{-N}x),\ldots, A(T^Nx)\right\}$.
The exact model has a well separated spectrum, is generated using invertible matrices, and is of relatively low dimension.

For this model we use the following choice of parameters to execute the algorithms:
\begin{itemize}
\item \textbf{Algorithm \ref{alg:svd2}:} $M = N$ and $\{N_k\} = \{1,6,\ldots,5k-4,\ldots,5K-4,N\}$ where $5K-4 < N \leq 5K+1$.
\item \textbf{Algorithm \ref{alg:dichproj1}:} We estimate the three largest Lyapunov exponents $\lambda_1 > \lambda_2 >\lambda_3$ and set $\Lambda^\text{right} = {\lambda_2 + 0.1(\lambda_1 - \lambda_2)}$ and $\Lambda^\text{left} = {\lambda_2 - 0.1(\lambda_2 - \lambda_3})$.
\item \textbf{Algorithm \ref{A2}:} As for Algorithm \ref{alg:dichproj1}.
\item \textbf{Algorithm \ref{alg:ginalt}:}  $M = N$, $M' = 5$, and $c'= (0,1)$.
\item \textbf{Algorithm \ref{alg:wolfe}:} Let $M_1 = N$, $M_1' = 5$ and $M_2 = N$.

\end{itemize}

\begin{figure}[hbt]
  \centering
  \def\svgwidth{\textwidth}
  
\begingroup%
  \makeatletter%
  \providecommand\color[2][]{%
    \errmessage{(Inkscape) Color is used for the text in Inkscape, but the package 'color.sty' is not loaded}%
    \renewcommand\color[2][]{}%
  }%
  \providecommand\transparent[1]{%
    \errmessage{(Inkscape) Transparency is used (non-zero) for the text in Inkscape, but the package 'transparent.sty' is not loaded}%
    \renewcommand\transparent[1]{}%
  }%
  \providecommand\rotatebox[2]{#2}%
  \ifx\svgwidth\undefined%
    \setlength{\unitlength}{617bp}%
    \ifx\svgscale\undefined%
      \relax%
    \else%
      \setlength{\unitlength}{\unitlength * \real{\svgscale}}%
    \fi%
  \else%
    \setlength{\unitlength}{\svgwidth}%
  \fi%
  \global\let\svgwidth\undefined%
  \global\let\svgscale\undefined%
  \makeatother%
  \begin{picture}(1,0.66126418)%
    \put(0,0){\includegraphics[width=\unitlength]{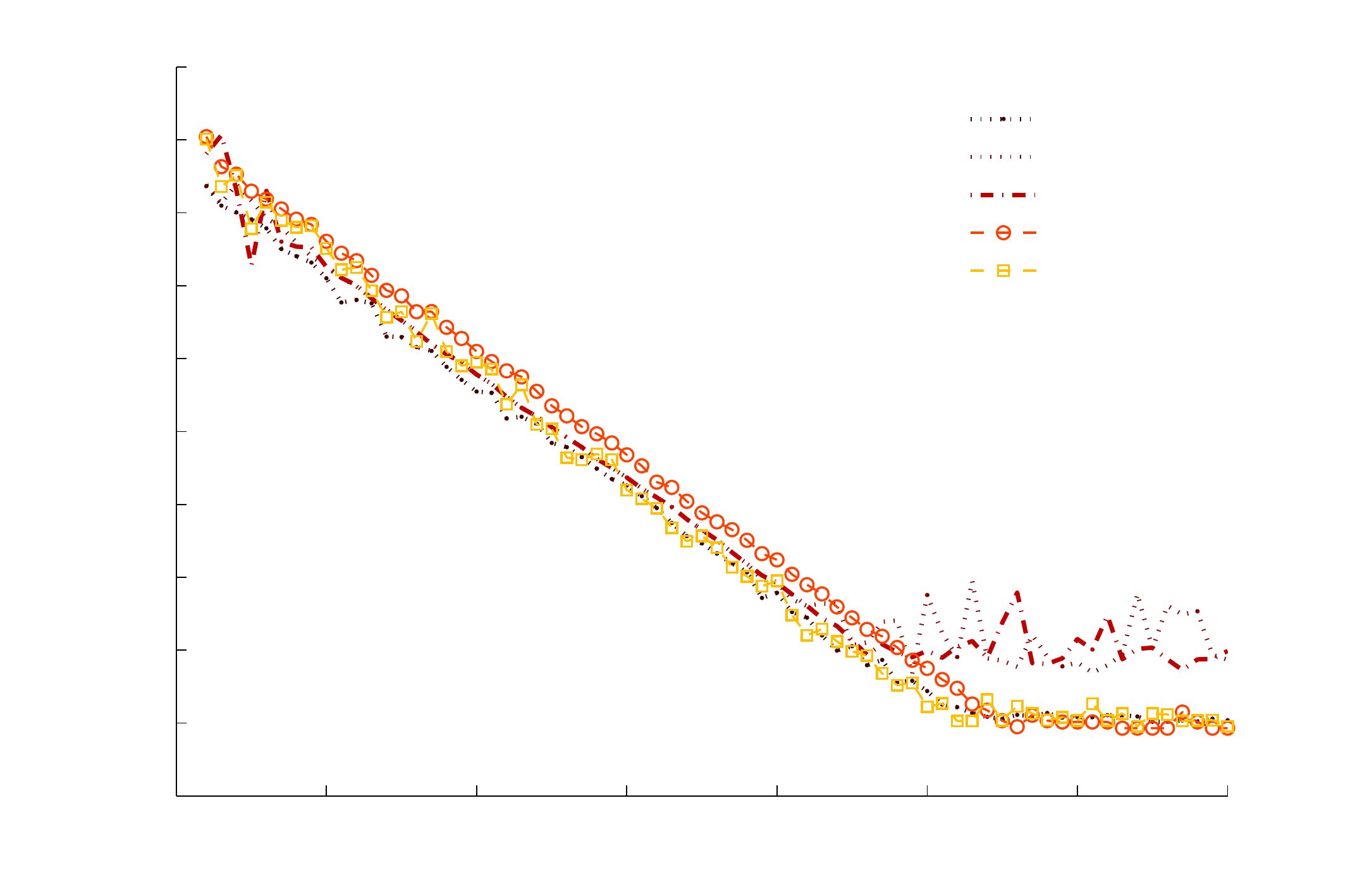}}%
    \put(0.12574263,0.05402464){\makebox(0,0)[lb]{\smash{\tiny 0}}}%
    \put(0.23203728,0.05402464){\makebox(0,0)[lb]{\smash{\tiny 50}}}%
    \put(0.33833063,0.05402464){\makebox(0,0)[lb]{\smash{\tiny 100}}}%
    \put(0.44921718,0.05402464){\makebox(0,0)[lb]{\smash{\tiny 150}}}%
    \put(0.56010211,0.05402464){\makebox(0,0)[lb]{\smash{\tiny 200}}}%
    \put(0.67098865,0.05402464){\makebox(0,0)[lb]{\smash{\tiny 250}}}%
    \put(0.7818752,0.05402464){\makebox(0,0)[lb]{\smash{\tiny 300}}}%
    \put(0.89276013,0.05402464){\makebox(0,0)[lb]{\smash{\tiny 350}}}%
    \put(0.08149627,0.06766613){\makebox(0,0)[lb]{\smash{\tiny $10^{-18}$}}}%
    \put(0.08149627,0.12155592){\makebox(0,0)[lb]{\smash{\tiny $10^{-16}$}}}%
    \put(0.08149627,0.17531118){\makebox(0,0)[lb]{\smash{\tiny $10^{-14}$}}}%
    \put(0.08149627,0.22920097){\makebox(0,0)[lb]{\smash{\tiny $10^{-12}$}}}%
    \put(0.08149627,0.28295462){\makebox(0,0)[lb]{\smash{\tiny $10^{-10}$}}}%
    \put(0.08149627,0.33670989){\makebox(0,0)[lb]{\smash{\tiny $10^{-8}$}}}%
    \put(0.08149627,0.39059968){\makebox(0,0)[lb]{\smash{\tiny $10^{-6}$}}}%
    \put(0.08149627,0.44435494){\makebox(0,0)[lb]{\smash{\tiny $10^{-4}$}}}%
    \put(0.08149627,0.49824473){\makebox(0,0)[lb]{\smash{\tiny $10^{-2}$}}}%
    \put(0.08149627,0.55199838){\makebox(0,0)[lb]{\smash{\tiny $10^0$}}}%
    \put(0.08149627,0.60575365){\makebox(0,0)[lb]{\smash{\tiny $10^2$}}}%
    \put(0.51404699,0.03471118){\makebox(0,0)[lb]{\smash{$N$}}}%
    \put(0.05226904,0.23733064){\rotatebox{90}{\makebox(0,0)[lb]{\smash{$\norm{w_2^{(N)}(x) - w_2(x)}$}}}}%
    \put(0.7701248,0.56766613){\makebox(0,0)[lb]{\smash{Algorithm \ref{alg:svd2}}}}%
    \put(0.7701248,0.53970827){\makebox(0,0)[lb]{\smash{Algorithm \ref{alg:dichproj1}}}}%
    \put(0.7701248,0.51175041){\makebox(0,0)[lb]{\smash{Algorithm \ref{A2}}}}%
    \put(0.7701248,0.48365802){\makebox(0,0)[lb]{\smash{Algorithm \ref{alg:ginalt}}}}%
    \put(0.7701248,0.45570016){\makebox(0,0)[lb]{\smash{Algorithm \ref{alg:wolfe}}}}%
  \end{picture}%
\endgroup%
  \caption{\label{fig:exact_overview}Comparing the approximations of the second Oseledets subspace $W^{(N)}_2(x)$ with the exact solution $W_2(x)$ which is known \emph{a priori}.  Each ``$N$-approximation'' is computed using cocycle data $\{A(T^{-N}x), A(T^{-N+1}x), \ldots, A(T^Nx)\}$. The comparison is simply the Euclidean norm of the separation of the two unit vectors $w_2^{(N)}(x)\in W_2^{(N)}(x)$ and $w_2(x) \in W_2(x)$.}
\end{figure}

Figure \ref{fig:exact_overview} compares the approximations yielded from the various approaches outlined in Sections \ref{sec:svd}, \ref{sec:dich_proj} and \ref{sec:gin_wolfe} with the known solution of Equation \eqref{eq:toy_model}.  Each algorithm exhibits approximately exponential convergence with respect to the length of the sample cocycle up to (almost) machine accuracy of about $10^{-16}$.  Algorithm \ref{alg:ginelli} is notably erratic whereas the other algorithms converge smoothly, which suggests that in the limited data scenario (small $N$) it represents the less satisfactory choice.

Algorithm \ref{alg:svd2} is slightly more accurate than the other algorithms for large $N$, while there is a limit to the accuracy of Algorithms \ref{alg:dichproj1} and \ref{A2}.

It is worth noting that Algorithms \ref{alg:svd2} and \ref{alg:wolfe} do not perform as well when approximating Oseledets subspaces corresponding to Lyapunov exponents $\lambda_3, \ldots, \lambda_\ell$.  Whilst they reach machine accuracy with ease for $W_1$ and $W_2$, forming $A(x,n) = A(T^{n-1}x)\cdots A(x)$ via numerical matrix multiplication produces greater inaccuracies for subspaces $W_3$ through $W_\ell$ (see Remark \ref{svdbad}).  On the other hand, Algorithms \ref{alg:dichproj1}, \ref{A2} and \ref{alg:ginalt} do not suffer from the same issue because they do not need to form $A(x,n)$ but use only the generating matrices $A(x)$.  As such, they still reach machine accuracy, although a greater amount of data (larger $N$) is required.

\begin{figure}[hbt]
\def\svgwidth{\textwidth}
\begingroup%
  \makeatletter%
  \providecommand\color[2][]{%
    \errmessage{(Inkscape) Color is used for the text in Inkscape, but the package 'color.sty' is not loaded}%
    \renewcommand\color[2][]{}%
  }%
  \providecommand\transparent[1]{%
    \errmessage{(Inkscape) Transparency is used (non-zero) for the text in Inkscape, but the package 'transparent.sty' is not loaded}%
    \renewcommand\transparent[1]{}%
  }%
  \providecommand\rotatebox[2]{#2}%
  \ifx\svgwidth\undefined%
    \setlength{\unitlength}{617bp}%
    \ifx\svgscale\undefined%
      \relax%
    \else%
      \setlength{\unitlength}{\unitlength * \real{\svgscale}}%
    \fi%
  \else%
    \setlength{\unitlength}{\svgwidth}%
  \fi%
  \global\let\svgwidth\undefined%
  \global\let\svgscale\undefined%
  \makeatother%
  \begin{picture}(1,0.66126418)%
    \put(0,0){\includegraphics[width=\unitlength]{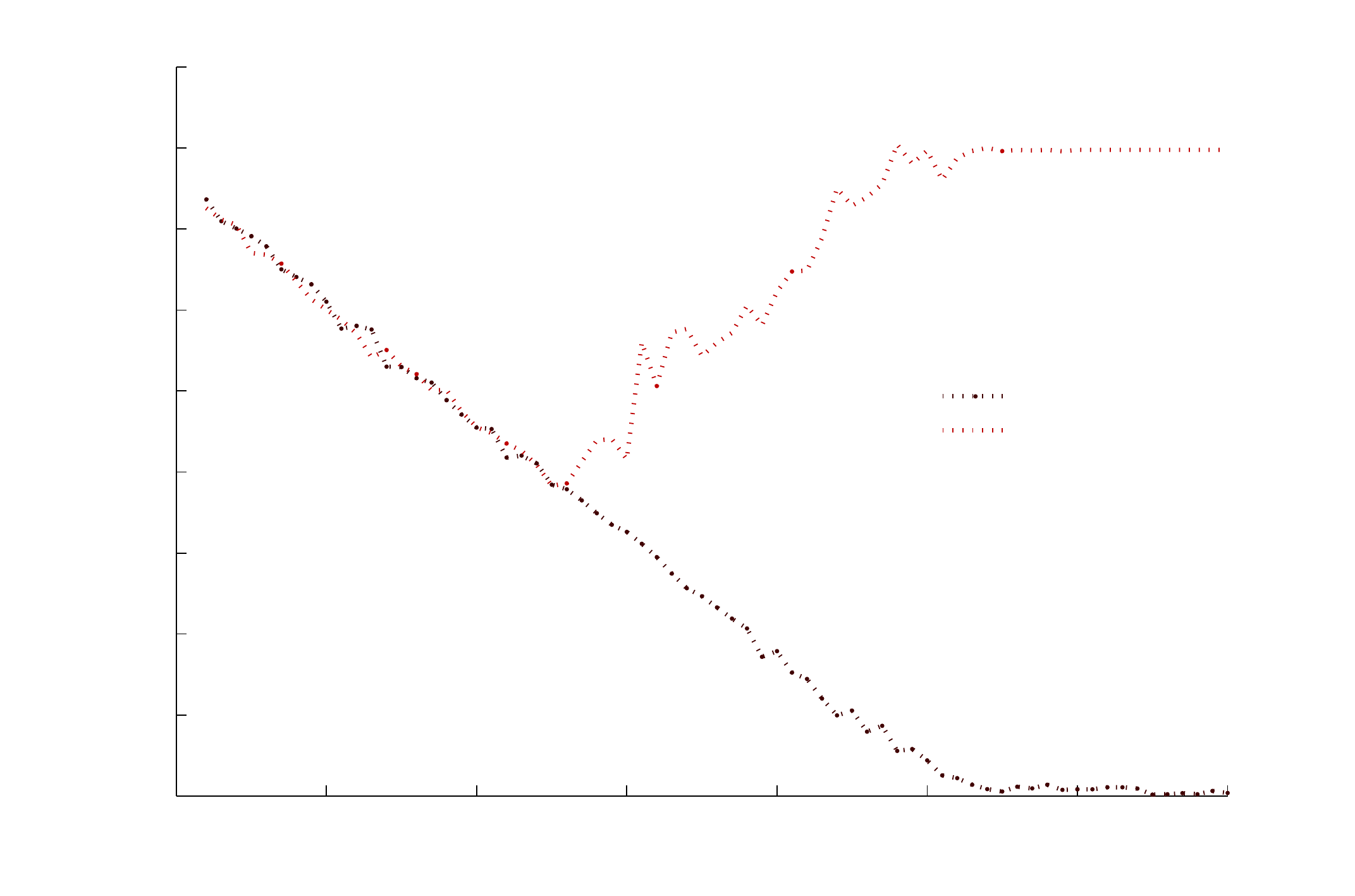}}%
    \put(0.12574263,0.05402464){\makebox(0,0)[lb]{\smash{\tiny 0}}}%
    \put(0.23203728,0.05402464){\makebox(0,0)[lb]{\smash{\tiny 50}}}%
    \put(0.33833063,0.05402464){\makebox(0,0)[lb]{\smash{\tiny 100}}}%
    \put(0.44921718,0.05402464){\makebox(0,0)[lb]{\smash{\tiny 150}}}%
    \put(0.56010211,0.05402464){\makebox(0,0)[lb]{\smash{\tiny 200}}}%
    \put(0.67098865,0.05402464){\makebox(0,0)[lb]{\smash{\tiny 250}}}%
    \put(0.7818752,0.05402464){\makebox(0,0)[lb]{\smash{\tiny 300}}}%
    \put(0.89276013,0.05402464){\makebox(0,0)[lb]{\smash{\tiny 350}}}%
    \put(0.06853031,0.06766613){\makebox(0,0)[lb]{\smash{\tiny $10^{-16}$}}}%
    \put(0.06853031,0.12749887){\makebox(0,0)[lb]{\smash{\tiny $10^{-14}$}}}%
    \put(0.06853031,0.18733063){\makebox(0,0)[lb]{\smash{\tiny $10^{-12}$}}}%
    \put(0.06853031,0.24702917){\makebox(0,0)[lb]{\smash{\tiny $10^{-10}$}}}%
    \put(0.06853031,0.30686062){\makebox(0,0)[lb]{\smash{\tiny $10^{-8}$}}}%
    \put(0.06853031,0.36669368){\makebox(0,0)[lb]{\smash{\tiny $10^{-6}$}}}%
    \put(0.06853031,0.4263906){\makebox(0,0)[lb]{\smash{\tiny $10^{-4}$}}}%
    \put(0.06853031,0.48622366){\makebox(0,0)[lb]{\smash{\tiny $10^{-2}$}}}%
    \put(0.06853031,0.54605673){\makebox(0,0)[lb]{\smash{\tiny $10^{0}$}}}%
    \put(0.06853031,0.60575365){\makebox(0,0)[lb]{\smash{\tiny $10^2$}}}%
    \put(0.51404699,0.03471118){\makebox(0,0)[lb]{\smash{$N$}}}%
    \put(0.02910546,0.22678367){\rotatebox{90}{\makebox(0,0)[lb]{\smash{$\norm{w^{(N)}_2(x) - w_2(x)}$}}}}%
    \put(0.75092681,0.3611035){\color[rgb]{0,0,0}\makebox(0,0)[lb]{\smash{Algorithm \ref{alg:svd2}}}}%
    \put(0.75113559,0.33449862){\color[rgb]{0,0,0}\makebox(0,0)[lb]{\smash{Algorithm \ref{alg:svd}}}}%
  \end{picture}%
\endgroup%
 \caption{\label{fig:exact_comp_SVD}Comparing the approximation of the second Oseledets vector with the exact solution for the two SVD based approaches, demonstrating the numerical instability which is overcome in the adapted SVD approach.}
\end{figure}

Figure \ref{fig:exact_comp_SVD} is similar to Figure \ref{fig:exact_overview} except that it compares only Algorithms \ref{alg:svd} and \ref{alg:svd2}.  In doing so, it highlights the result of one of the numerical instabilities of Algorithm \ref{alg:svd}, namely the pushing forward of $U_j^{(M)}(T^{-N}x)$ in Step \ref{item:SVD_step_3}.

\begin{figure}[hbt]
  \def\svgwidth{\textwidth}
\begingroup%
  \makeatletter%
  \providecommand\color[2][]{%
    \errmessage{(Inkscape) Color is used for the text in Inkscape, but the package 'color.sty' is not loaded}%
    \renewcommand\color[2][]{}%
  }%
  \providecommand\transparent[1]{%
    \errmessage{(Inkscape) Transparency is used (non-zero) for the text in Inkscape, but the package 'transparent.sty' is not loaded}%
    \renewcommand\transparent[1]{}%
  }%
  \providecommand\rotatebox[2]{#2}%
  \ifx\svgwidth\undefined%
    \setlength{\unitlength}{617bp}%
    \ifx\svgscale\undefined%
      \relax%
    \else%
      \setlength{\unitlength}{\unitlength * \real{\svgscale}}%
    \fi%
  \else%
    \setlength{\unitlength}{\svgwidth}%
  \fi%
  \global\let\svgwidth\undefined%
  \global\let\svgscale\undefined%
  \makeatother%
  \begin{picture}(1,0.66126418)%
    \put(0,0){\includegraphics[width=\unitlength]{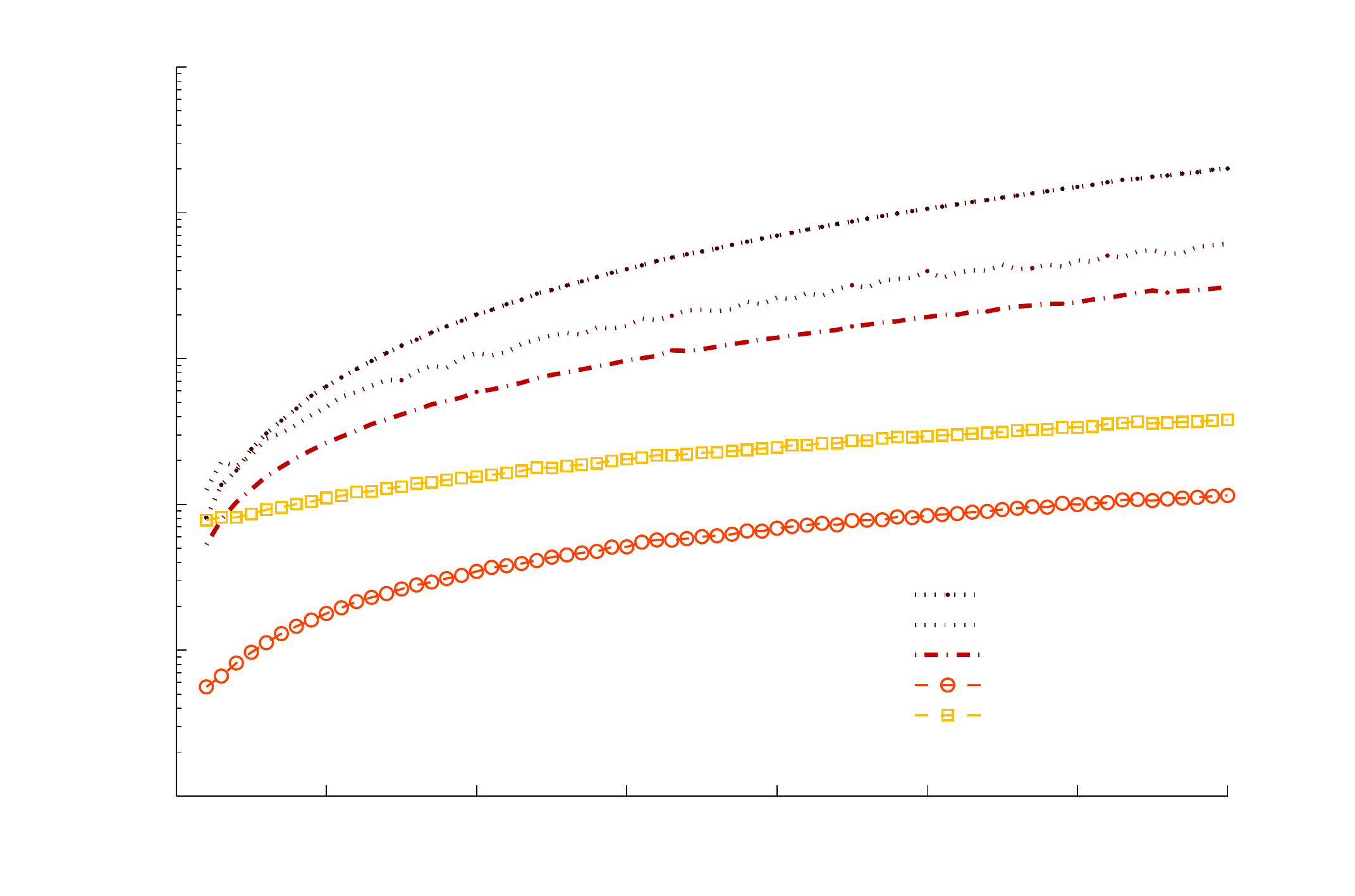}}%
    \put(0.12574263,0.05402464){\makebox(0,0)[lb]{\smash{\tiny 0}}}%
    \put(0.23203728,0.05402464){\makebox(0,0)[lb]{\smash{\tiny 50}}}%
    \put(0.33833063,0.05402464){\makebox(0,0)[lb]{\smash{\tiny 100}}}%
    \put(0.44921718,0.05402464){\makebox(0,0)[lb]{\smash{\tiny 150}}}%
    \put(0.56010211,0.05402464){\makebox(0,0)[lb]{\smash{\tiny 200}}}%
    \put(0.67098865,0.05402464){\makebox(0,0)[lb]{\smash{\tiny 250}}}%
    \put(0.7818752,0.05402464){\makebox(0,0)[lb]{\smash{\tiny 300}}}%
    \put(0.89276013,0.05402464){\makebox(0,0)[lb]{\smash{\tiny 350}}}%
    \put(0.08743922,0.06766613){\makebox(0,0)[lb]{\smash{\tiny $10^{-4}$}}}%
    \put(0.08743922,0.17531118){\makebox(0,0)[lb]{\smash{\tiny $10^{-3}$}}}%
    \put(0.08743922,0.28295462){\makebox(0,0)[lb]{\smash{\tiny $10^{-2}$}}}%
    \put(0.08743922,0.39059968){\makebox(0,0)[lb]{\smash{\tiny $10^{-1}$}}}%
    \put(0.08743922,0.49824473){\makebox(0,0)[lb]{\smash{\tiny $10^0$}}}%
    \put(0.08743922,0.60575365){\makebox(0,0)[lb]{\smash{\tiny $10^1$}}}%
    \put(0.72893031,0.21636953){\makebox(0,0)[lb]{\smash{Algorithm \ref{alg:svd2}}}}%
    \put(0.72893031,0.1942188){\makebox(0,0)[lb]{\smash{Algorithm \ref{alg:dichproj1}}}}%
    \put(0.72893031,0.17206969){\makebox(0,0)[lb]{\smash{Algorithm \ref{A2}}}}%
    \put(0.72893031,0.14978412){\makebox(0,0)[lb]{\smash{Algorithm \ref{alg:ginalt}}}}%
    \put(0.72893031,0.12763371){\makebox(0,0)[lb]{\smash{Algorithm \ref{alg:wolfe}}}}%
    \put(0.51404699,0.03471118){\makebox(0,0)[lb]{\smash{$N$}}}%
    \put(0.06080519,0.33068557){\rotatebox{90}{\makebox(0,0)[lb]{\smash{$\tau$}}}}%
  \end{picture}%
\endgroup%
  \caption{\label{fig:toymodel_times}Comparing the execution time $\tau$ of the various algorithms using MATLAB's timing functionality.  Each algorithm is executed using the cocycle data $\left\{A(T^{-N}x),\ldots, A(T^Nx)\right\}$.}
\end{figure}

Finally, Figure \ref{fig:toymodel_times} shows the execution times of Algorithms \ref{alg:svd2}, \ref{alg:dichproj1}, \ref{A2}, {\ref{alg:ginalt}} and \ref{alg:wolfe}, which were timed using MATLAB's timing functionality.  The most time-consuming step in Algorithm {\ref{alg:ginalt}} is the SVD performed as part of the alterations from Section \ref{sec:limited_ginelli}.  Algorithm \ref{alg:wolfe} must perform two SVDs and Algorithm \ref{alg:svd2} must perform {many} more, which accounts for their longer execution times.

\subsection{Case Study 2: Particle dynamics - two disks in a quasi-one-dimensional box \label{sec:case_study_2}}
We consider the quasi-one-dimensional heat system studied extensively by Morriss \emph{et al. } \cite{Chung2010,Morriss2009,Robinson2008,Taniguchi2007,Taniguchi2005,Taniguchi2005a} which consists of two disks of diameter $\sigma=1$ in a rectangular box, $[0,L_x]\times[0,L_y]$, in which the shorter side, has length $L_y < 2\sigma$ so that the disks may not change order.  The two disks interact elastically with each other and the short walls, but periodic boundary conditions are enforced in the $y$-direction. The phase space of the system is then the set $X\subset \mathbb R^8$
\begin{align}
  X = \left(\left([0,L_x]\times[0,L_y]\right) / \sim\right)^2\times \mathbb R^2 \times \mathbb R^2 \notag
\end{align}
where $\sim$ is the equivalence class associated with periodic boundary conditions, that is, $(x_1,y_1),(x_2,y_2)\in[0,L_x]\times[0,L_y]$ have $(x_1,y_1) \sim (x_2,y_2)$ if $y_1 = y_2 \mod L_y$ and $x_1=x_2$.

The flow $\phi^\tau:X\to X$ consists of free-flight maps of time $\tau$, $\mathcal F^\tau:X\to X$, and collision maps $\mathcal C:X\to X$ so that $\phi^\tau(x) = \mathcal C \circ \mathcal F^{\tau_n}\circ\cdots\circ \mathcal F^{\tau_2}\circ \mathcal C \circ \mathcal F^{\tau_1}(x)$ where $\tau_1 + \cdots + \tau_n = \tau$. We consider a discrete-time version of the system by mapping from the instant after collision to the instant after the next collision, that is $x\mapsto \mathcal C \circ \mathcal F^{\tau(x)}(x)$ where $\tau(x)$ is the free-flight time in the continuous system.  The matrix cocycle is generated by the $8\times 8$ Jacobian matrices or the derivative of the flow evaluated instantly after each collision (i.e. $A(x) = D\left(\mathcal C \circ \mathcal F^{\tau(x)}\right)(x)$, see \cite{Chung2010} for details).  Due to a number of dynamic symmetries the system has Lyapunov exponents $\lambda_1 > \lambda_2 > 0 > -\lambda_2 > -\lambda_1$ with multiplicities $1,1,4,1$ and $1$ respectively.
This system has some symmetry, a high variation in expansion rates from
iteration to iteration, a well separated spectrum, invertible Jacobian matrices, and relatively
low dimension.
Numerical integration of an orbit consisting of 4646 collisions yielded a sequence of Jacobian matrices $\left\{A(T^{-2323}x),\ldots,A(T^{2322}x)\right\}$ which generate the cocycle $A$.

For this model we use the same choice of parameters to execute the algorithms as with the previous model:
\begin{itemize}
\item \textbf{Algorithm \ref{alg:svd2}:} $M = N$ and $\{N_k\} = \{1,6,\ldots,5k-4,\ldots,5K-4,N\}$ where $5K-4 < N \leq 5K+1$.
\item \textbf{Algorithm \ref{alg:dichproj1}:} We estimate the three largest Lyapunov exponents $\lambda_1 > \lambda_2 >\lambda_3$ and set $\Lambda^\text{right} = {\lambda_2 + 0.1(\lambda_1 - \lambda_2)}$ and $\Lambda^\text{left} = {\lambda_2 - 0.1(\lambda_2 - \lambda_3})$.
\item \textbf{Algorithm \ref{A2}:} As for Algorithm \ref{alg:dichproj1}.
\item \textbf{Algorithm \ref{alg:ginalt}:}  $M = N$, $M' = 5$, and $c'= (0,1)$.

\item \textbf{Algorithm \ref{alg:wolfe}:} Let $M_1 = N$, $M_1' = 5$ and $M_2 = N$.
\end{itemize}

\subsubsection{Criteria to assess the accuracy of estimated Oseledets spaces}

Since the Oseledets subspaces for this model are unknown, we test the approximations for two properties of Oseledets subspaces, namely their equivariance and the expansion rate, which defines the corresponding Lyapunov exponent.

\sloppy\paragraph{Equivariance:} To test for equivariance, we approximate the second Oseledets vector, $w_2^{(N)}(T^nx)$, at each time $n=0,1,\ldots,30$.  We then compute $\norm{\mathcal N\left(A(x,n)w_2^{(N)}(x)\right) - w_2^{(N)}(T^nx)}$ and plot the result, where  $v \mapright{\mathcal{N}} v/\norm{v}$.  If the approximations are equivariant this value would be zero.

\paragraph{Expansion Rate:}To test the expansion rate, each approach is used to compute the second Oseledets vector, $w_2^{(N)}(x) \in W_2^{(N)}(x)$, at time $n=0$ and we plot  $\frac{1}{m}\log\norm{A(x,m)w_2^{(N)}(x)}$ versus $m$.  If $W^{(N)}_2(x)$ is accurate, elements of $W^{(N)}_2(x)$ should grow at the correct rate: $\lambda_2$.

Whilst the Oseledets vector $w_2(x)$ must satisfy the above two properties, we must be careful when examining the results of these numerical experiments.  For instance, (i) it is possible to choose vectors that are equivariant despite not being contained in any single Oseledets subspace, and (ii) any element of $V_2(x)\backslash V_3(x) \varsupsetneq W_2(x)$ (a much larger set than $W_2(x)$) has Lyapunov exponent $\lambda_2$.

\subsubsection{Numerical Results}
Figure \ref{fig:ball_invar} shows the results of the equivariance test for the quasi-one-dimensional two disk model.  At the lower end of cocycle data length  ($N=75$) all Algorithms except \ref{A2} display reasonable equivariance, {although} Algorithm \ref{alg:dichproj1} remains equivariant for only a handful of steps. For $N=150$ and $N=225$ all approaches appear to produce close to equivariant results (note the changing scales in the vertical direction), with Algorithms \ref{alg:dichproj1} and \ref{A2} lagging behind when $N=225$.

\begin{figure}[!hbt]
  \begin{center}
  \def\svgwidth{1.1\textwidth}
\begingroup%
  \makeatletter%
  \providecommand\color[2][]{%
    \errmessage{(Inkscape) Color is used for the text in Inkscape, but the package 'color.sty' is not loaded}%
    \renewcommand\color[2][]{}%
  }%
  \providecommand\transparent[1]{%
    \errmessage{(Inkscape) Transparency is used (non-zero) for the text in Inkscape, but the package 'transparent.sty' is not loaded}%
    \renewcommand\transparent[1]{}%
  }%
  \providecommand\rotatebox[2]{#2}%
  \ifx\svgwidth\undefined%
    \setlength{\unitlength}{908bp}%
    \ifx\svgscale\undefined%
      \relax%
    \else%
      \setlength{\unitlength}{\unitlength * \real{\svgscale}}%
    \fi%
  \else%
    \setlength{\unitlength}{\svgwidth}%
  \fi%
  \global\let\svgwidth\undefined%
  \global\let\svgscale\undefined%
  \makeatother%
  \begin{picture}(1,0.719163)%
    \put(0,0){\includegraphics[width=\unitlength]{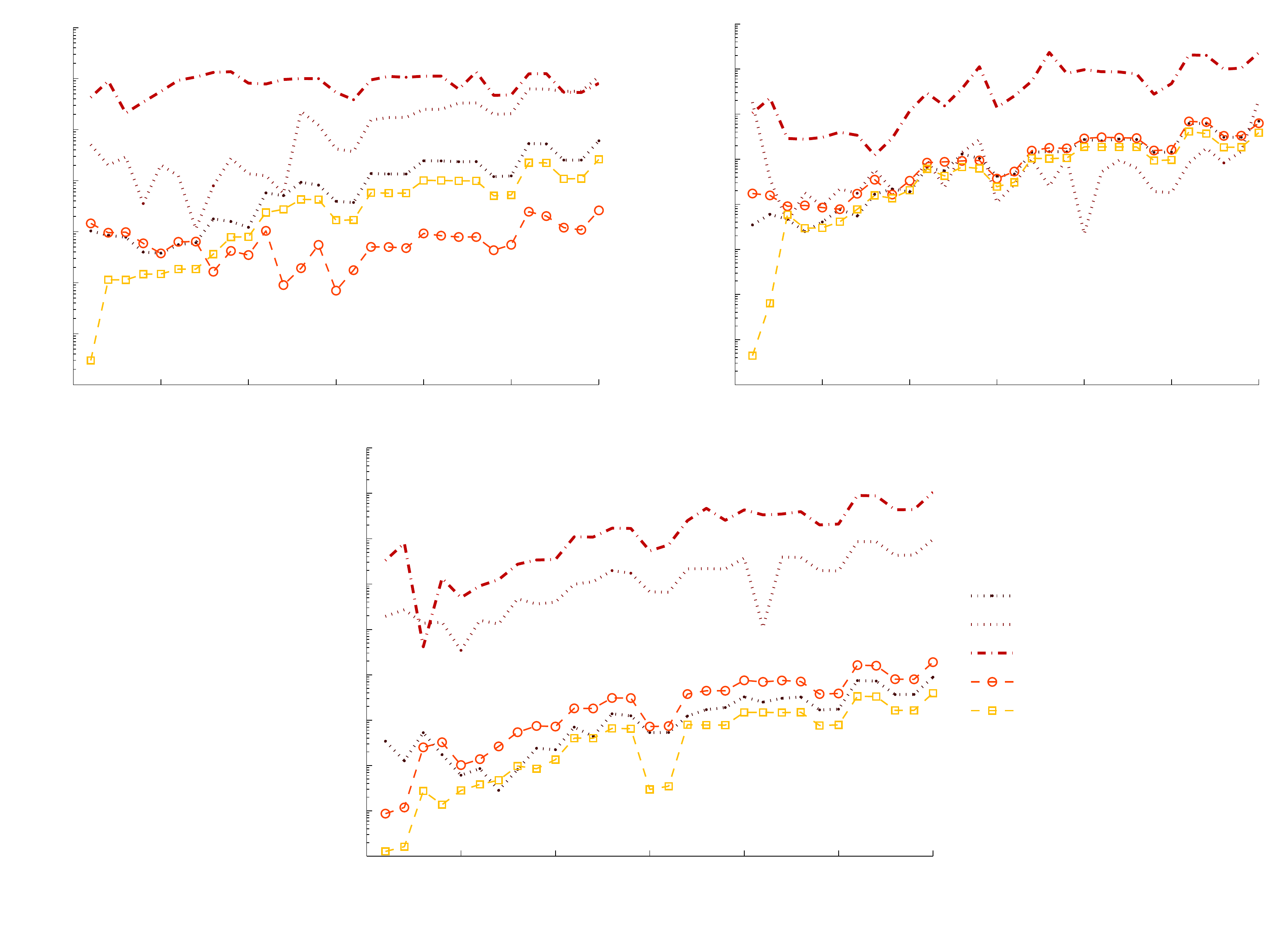}}%
    \put(0.79038216,0.25238656){\makebox(0,0)[lb]{\smash{Algorithm \ref{alg:svd2}}}}%
    \put(0.79038216,0.2300848){\makebox(0,0)[lb]{\smash{Algorithm \ref{alg:dichproj1}}}}%
    \put(0.79038216,0.20778304){\makebox(0,0)[lb]{\smash{Algorithm \ref{A2}}}}%
    \put(0.79038216,0.18548128){\makebox(0,0)[lb]{\smash{Algorithm \ref{alg:ginalt}}}}%
    \put(0.79038216,0.16317952){\makebox(0,0)[lb]{\smash{Algoirthm \ref{alg:wolfe}}}}%
    \put(0.258076,0.38852754){\makebox(0,0)[lb]{\smash{$m$}}}%
    \put(0.00446035,0.40991141){\rotatebox{90}{\makebox(0,0)[lb]{\smash{\scriptsize $\norm{\mathcal N\left(A(x,m)w_2^{(N)}(x)\right) - w_2^{(N)}(T^m(x))}$}}}}%
    \put(0.77120044,0.38852754){\makebox(0,0)[lb]{\smash{$m$}}}%
    \put(0.51428083,0.40424009){\rotatebox{90}{\makebox(0,0)[lb]{\smash{\scriptsize $\norm{\mathcal N\left(A(x,m)w_2^{(N)}(x)\right) - w_2^{(N)}(T^m(x))}$}}}}%
    \put(0.50165198,0.0222467){\makebox(0,0)[lb]{\smash{$m$}}}%
    \put(0.22821255,0.05816828){\rotatebox{90}{\makebox(0,0)[lb]{\smash{\scriptsize $\norm{\mathcal N\left(A(x,m)w_2^{(N)}(x)\right) - w_2^{(N)}(T^m(x))}$}}}}%
    \put(0.05387313,0.40693833){\makebox(0,0)[lb]{\smash{\tiny 0}}}%
    \put(0.12187996,0.40693833){\makebox(0,0)[lb]{\smash{\tiny 5}}}%
    \put(0.18685793,0.40693833){\makebox(0,0)[lb]{\smash{\tiny 10}}}%
    \put(0.25486454,0.40693833){\makebox(0,0)[lb]{\smash{\tiny 15}}}%
    \put(0.32287115,0.40693833){\makebox(0,0)[lb]{\smash{\tiny 20}}}%
    \put(0.39087775,0.40693833){\makebox(0,0)[lb]{\smash{\tiny 25}}}%
    \put(0.45897577,0.40693833){\makebox(0,0)[lb]{\smash{\tiny 30}}}%
    \put(0.01903436,0.41620814){\makebox(0,0)[lb]{\smash{\tiny $10^{-6}$}}}%
    \put(0.01903436,0.45585573){\makebox(0,0)[lb]{\smash{\tiny $10^{-5}$}}}%
    \put(0.01903436,0.4955033){\makebox(0,0)[lb]{\smash{\tiny $10^{-4}$}}}%
    \put(0.01903436,0.53515088){\makebox(0,0)[lb]{\smash{\tiny $10^{-3}$}}}%
    \put(0.01903436,0.57479846){\makebox(0,0)[lb]{\smash{\tiny $10^{-2}$}}}%
    \put(0.01903436,0.61444604){\makebox(0,0)[lb]{\smash{\tiny $10^{-1}$}}}%
    \put(0.01903436,0.65409361){\makebox(0,0)[lb]{\smash{\tiny $10^{0}$}}}%
    \put(0.01903436,0.69364978){\makebox(0,0)[lb]{\smash{\tiny $10^1$}}}%
    \put(0.56773128,0.40693833){\makebox(0,0)[lb]{\smash{\tiny 0}}}%
    \put(0.63546256,0.40693833){\makebox(0,0)[lb]{\smash{\tiny 5}}}%
    \put(0.70025661,0.40693833){\makebox(0,0)[lb]{\smash{\tiny 10}}}%
    \put(0.76798789,0.40693833){\makebox(0,0)[lb]{\smash{\tiny 15}}}%
    \put(0.83581167,0.40693833){\makebox(0,0)[lb]{\smash{\tiny 20}}}%
    \put(0.90354295,0.40693833){\makebox(0,0)[lb]{\smash{\tiny 25}}}%
    \put(0.97136564,0.40693833){\makebox(0,0)[lb]{\smash{\tiny 30}}}%
    \put(0.52885461,0.41620814){\makebox(0,0)[lb]{\smash{\tiny $10^{-12}$}}}%
    \put(0.52885463,0.45126652){\makebox(0,0)[lb]{\smash{\tiny $10^{-11}$}}}%
    \put(0.52885463,0.48632599){\makebox(0,0)[lb]{\smash{\tiny 10${-10}$}}}%
    \put(0.52885463,0.52138436){\makebox(0,0)[lb]{\smash{\tiny $10^{-9}$}}}%
    \put(0.52885463,0.55635132){\makebox(0,0)[lb]{\smash{\tiny $10^{-8}$}}}%
    \put(0.52885463,0.59140969){\makebox(0,0)[lb]{\smash{\tiny $10^{-7}$}}}%
    \put(0.52885463,0.62646916){\makebox(0,0)[lb]{\smash{\tiny $10^{-6}$}}}%
    \put(0.52885463,0.66152753){\makebox(0,0)[lb]{\smash{\tiny $10^{-5}$}}}%
    \put(0.52885463,0.69649449){\makebox(0,0)[lb]{\smash{\tiny $10^{-4}$}}}%
    \put(0.281663,0.04065727){\makebox(0,0)[lb]{\smash{\tiny 0}}}%
    \put(0.35490088,0.04065727){\makebox(0,0)[lb]{\smash{\tiny 5}}}%
    \put(0.42520154,0.04065727){\makebox(0,0)[lb]{\smash{\tiny 10}}}%
    \put(0.49843943,0.04065727){\makebox(0,0)[lb]{\smash{\tiny 15}}}%
    \put(0.57176982,0.04065727){\makebox(0,0)[lb]{\smash{\tiny 20}}}%
    \put(0.64500771,0.04065727){\makebox(0,0)[lb]{\smash{\tiny 25}}}%
    \put(0.718337,0.04065727){\makebox(0,0)[lb]{\smash{\tiny 30}}}%
    \put(0.24278632,0.04992643){\makebox(0,0)[lb]{\smash{\tiny $10^{-14}$}}}%
    \put(0.24278634,0.08516872){\makebox(0,0)[lb]{\smash{\tiny $10^{-13}$}}}%
    \put(0.24278634,0.12041079){\makebox(0,0)[lb]{\smash{\tiny $10^{-12}$}}}%
    \put(0.24278634,0.15565308){\makebox(0,0)[lb]{\smash{\tiny $10^{-11}$}}}%
    \put(0.24278634,0.19089537){\makebox(0,0)[lb]{\smash{\tiny $10^{-10}$}}}%
    \put(0.24278634,0.22613767){\makebox(0,0)[lb]{\smash{\tiny $10^{-9}$}}}%
    \put(0.24278634,0.26137996){\makebox(0,0)[lb]{\smash{\tiny $10^{-8}$}}}%
    \put(0.24278634,0.29662225){\makebox(0,0)[lb]{\smash{\tiny $10^{-7}$}}}%
    \put(0.24278634,0.33186454){\makebox(0,0)[lb]{\smash{\tiny $10^{-6}$}}}%
    \put(0.24278634,0.36719934){\makebox(0,0)[lb]{\smash{\tiny $10^{-5}$}}}%
  \end{picture}%
\endgroup%
  \caption{\label{fig:ball_invar} The equivariance test for the various algorithms on the quasi-one-dimensional two disk model. Each approach is used to approximate the second Oseledets vector, $w_2^{(N)}(T^nx)\in W_2^{(N)}(T^nx)$ {using cocycle data $\{A(T^{-N}x),\ldots, A(T^Nx)\}$,} at each time $n=0,1,\ldots,30$. We then compute $\norm{\mathcal{N}A(x,n)w_2^{(N)}(x) - w_2^{(N)}(T^nx)}$ and plot the result.  Note the different scales on each vertical axis.  The plots shown are for $N=75$ (top left), $N = 150$ (top right) and $N = 225$ (bottom).}
\end{center}
\end{figure}

Figure \ref{fig:ball_rates} shows the results of the expansion rate test for the quasi-one-dimensional two disk model for various amounts of cocycle data $\left\{A(T^{-N}x),\ldots, A(T^{N}x)\right\}$.  As expected, when there is a limited amount of data available ($N$ small) the approximations either expand at the higher rate of $\lambda_1$ or only expand at the rate of $\lambda_2$ for a brief time before the error grows too large.  As $N$ is increased, the approximations expand at $\lambda_2$ for longer periods, suggesting that they more accurately represent $w_2(x)$.

\begin{figure}[!hbt]
  \begin{center}
  \def\svgwidth{\textwidth}
\begingroup%
  \makeatletter%
  \providecommand\color[2][]{%
    \errmessage{(Inkscape) Color is used for the text in Inkscape, but the package 'color.sty' is not loaded}%
    \renewcommand\color[2][]{}%
  }%
  \providecommand\transparent[1]{%
    \errmessage{(Inkscape) Transparency is used (non-zero) for the text in Inkscape, but the package 'transparent.sty' is not loaded}%
    \renewcommand\transparent[1]{}%
  }%
  \providecommand\rotatebox[2]{#2}%
  \ifx\svgwidth\undefined%
    \setlength{\unitlength}{908bp}%
    \ifx\svgscale\undefined%
      \relax%
    \else%
      \setlength{\unitlength}{\unitlength * \real{\svgscale}}%
    \fi%
  \else%
    \setlength{\unitlength}{\svgwidth}%
  \fi%
  \global\let\svgwidth\undefined%
  \global\let\svgscale\undefined%
  \makeatother%
  \begin{picture}(1,0.719163)%
    \put(0,0){\includegraphics[width=\unitlength]{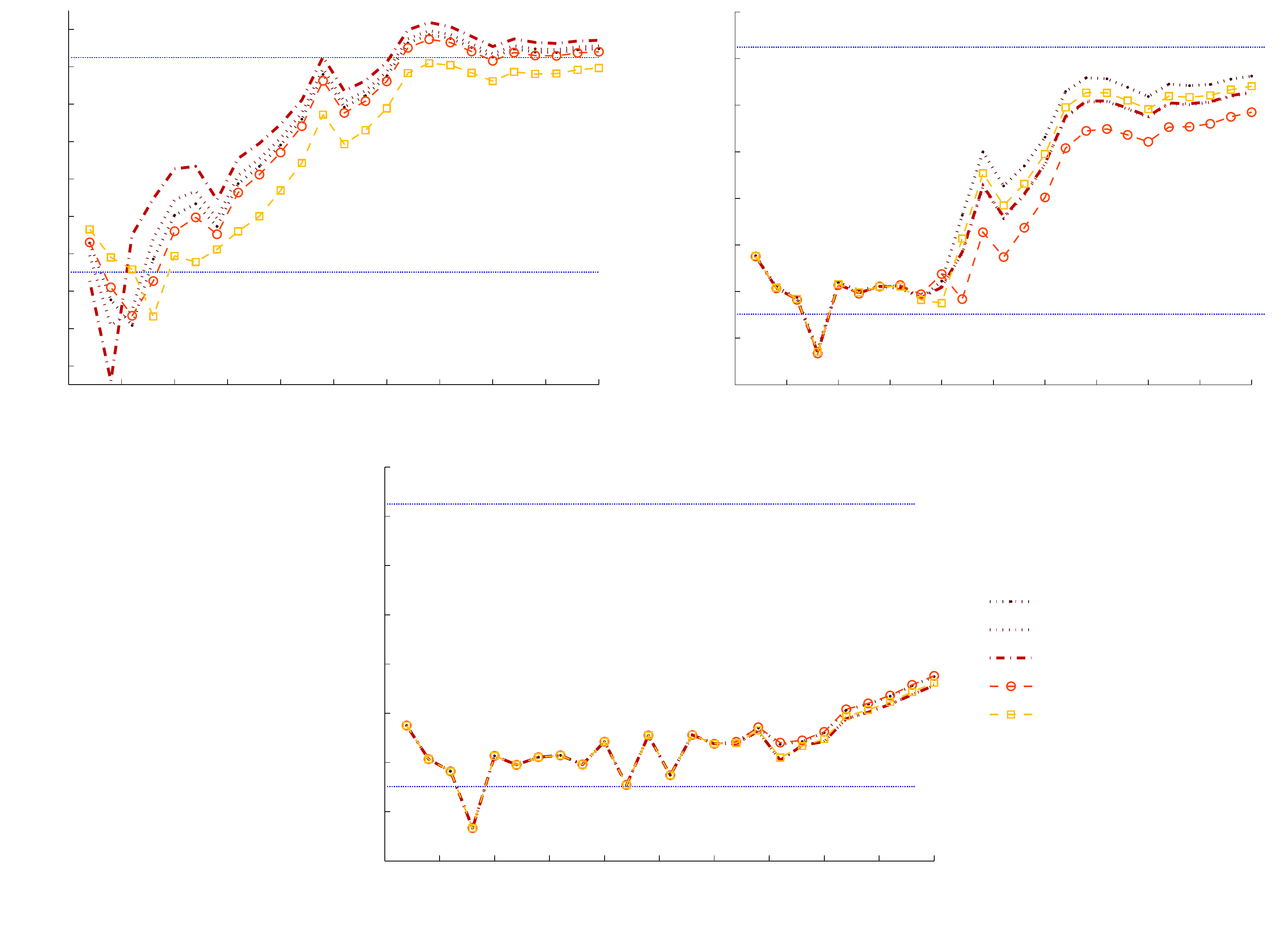}}%
    \put(0.80497467,0.24798128){\makebox(0,0)[lb]{\smash{Algorithm \ref{alg:svd2}}}}%
    \put(0.80497467,0.22595485){\makebox(0,0)[lb]{\smash{Algorithm \ref{alg:dichproj1}}}}%
    \put(0.80497467,0.20401982){\makebox(0,0)[lb]{\smash{Algorithm \ref{A2}}}}%
    \put(0.80497467,0.1820848){\makebox(0,0)[lb]{\smash{Algorithm \ref{alg:ginalt}}}}%
    \put(0.80497467,0.16005837){\makebox(0,0)[lb]{\smash{Algorithm \ref{alg:wolfe}}}}%
    \put(0.76844714,0.38852754){\makebox(0,0)[lb]{\smash{$m$}}}%
    \put(0.51925441,0.46002203){\rotatebox{90}{\makebox(0,0)[lb]{\smash{\scriptsize $\frac{1}{m}\log\norm{A(x,m)w_2^{(N)}(x)}$}}}}%
    \put(0.25624121,0.38852754){\makebox(0,0)[lb]{\smash{$m$}}}%
    \put(0.00172555,0.46057269){\rotatebox{90}{\makebox(0,0)[lb]{\smash{\scriptsize $\frac{1}{m}\log\norm{A(x,m)w_2^{(N)}(x)}$}}}}%
    \put(0.50917732,0.01857577){\makebox(0,0)[lb]{\smash{$m$}}}%
    \put(0.24722796,0.09823789){\rotatebox{90}{\makebox(0,0)[lb]{\smash{\scriptsize $\frac{1}{m}\log\norm{A(x,m)w_2^{(N)}(x)}$}}}}%
    \put(0.29570485,0.0369859){\makebox(0,0)[lb]{\smash{\tiny 0}}}%
    \put(0.33526101,0.0369859){\makebox(0,0)[lb]{\smash{\tiny 20}}}%
    \put(0.37793722,0.0369859){\makebox(0,0)[lb]{\smash{\tiny 40}}}%
    \put(0.42061344,0.0369859){\makebox(0,0)[lb]{\smash{\tiny 60}}}%
    \put(0.46328965,0.0369859){\makebox(0,0)[lb]{\smash{\tiny 80}}}%
    \put(0.50284471,0.0369859){\makebox(0,0)[lb]{\smash{\tiny 100}}}%
    \put(0.54542952,0.0369859){\makebox(0,0)[lb]{\smash{\tiny 120}}}%
    \put(0.58810573,0.0369859){\makebox(0,0)[lb]{\smash{\tiny 140}}}%
    \put(0.63078194,0.0369859){\makebox(0,0)[lb]{\smash{\tiny 160}}}%
    \put(0.67345815,0.0369859){\makebox(0,0)[lb]{\smash{\tiny 180}}}%
    \put(0.71613436,0.0369859){\makebox(0,0)[lb]{\smash{\tiny 200}}}%
    \put(0.26356498,0.04625551){\makebox(0,0)[lb]{\smash{\tiny 0.18}}}%
    \put(0.26971366,0.08452643){\makebox(0,0)[lb]{\smash{\tiny 0.2}}}%
    \put(0.26356498,0.12279736){\makebox(0,0)[lb]{\smash{\tiny 0.22}}}%
    \put(0.26356498,0.16106828){\makebox(0,0)[lb]{\smash{\tiny 0.24}}}%
    \put(0.26356498,0.1992478){\makebox(0,0)[lb]{\smash{\tiny 0.26}}}%
    \put(0.26356498,0.23751872){\makebox(0,0)[lb]{\smash{\tiny 0.28}}}%
    \put(0.26971366,0.27578965){\makebox(0,0)[lb]{\smash{\tiny 0.3}}}%
    \put(0.26356498,0.31406057){\makebox(0,0)[lb]{\smash{\tiny 0.32}}}%
    \put(0.26356498,0.35223899){\makebox(0,0)[lb]{\smash{\tiny 0.34}}}%
    \put(0.05020176,0.40693833){\makebox(0,0)[lb]{\smash{\tiny 0}}}%
    \put(0.08828943,0.40693833){\makebox(0,0)[lb]{\smash{\tiny 20}}}%
    \put(0.1294967,0.40693833){\makebox(0,0)[lb]{\smash{\tiny 40}}}%
    \put(0.17070485,0.40693833){\makebox(0,0)[lb]{\smash{\tiny 60}}}%
    \put(0.21182048,0.40693833){\makebox(0,0)[lb]{\smash{\tiny 80}}}%
    \put(0.24990859,0.40693833){\makebox(0,0)[lb]{\smash{\tiny 100}}}%
    \put(0.29111564,0.40693833){\makebox(0,0)[lb]{\smash{\tiny 120}}}%
    \put(0.33223238,0.40693833){\makebox(0,0)[lb]{\smash{\tiny 140}}}%
    \put(0.37343943,0.40693833){\makebox(0,0)[lb]{\smash{\tiny 160}}}%
    \put(0.41464758,0.40693833){\makebox(0,0)[lb]{\smash{\tiny 180}}}%
    \put(0.45585573,0.40693833){\makebox(0,0)[lb]{\smash{\tiny 200}}}%
    \put(0.01806167,0.43080066){\makebox(0,0)[lb]{\smash{\tiny 0.16}}}%
    \put(0.01806167,0.45980176){\makebox(0,0)[lb]{\smash{\tiny 0.18}}}%
    \put(0.02421057,0.48889537){\makebox(0,0)[lb]{\smash{\tiny 0.2}}}%
    \put(0.01806167,0.51798899){\makebox(0,0)[lb]{\smash{\tiny 0.22}}}%
    \put(0.01806167,0.54699009){\makebox(0,0)[lb]{\smash{\tiny 0.24}}}%
    \put(0.01806167,0.5760837){\makebox(0,0)[lb]{\smash{\tiny 0.26}}}%
    \put(0.01806167,0.6050848){\makebox(0,0)[lb]{\smash{\tiny 0.28}}}%
    \put(0.02421057,0.63417841){\makebox(0,0)[lb]{\smash{\tiny 0.3}}}%
    \put(0.01806167,0.66327093){\makebox(0,0)[lb]{\smash{\tiny 0.32}}}%
    \put(0.01806167,0.69227313){\makebox(0,0)[lb]{\smash{\tiny 0.34}}}%
    \put(0.56773128,0.40693833){\makebox(0,0)[lb]{\smash{\tiny 0}}}%
    \put(0.60480947,0.40693833){\makebox(0,0)[lb]{\smash{\tiny 20}}}%
    \put(0.6449152,0.40693833){\makebox(0,0)[lb]{\smash{\tiny 40}}}%
    \put(0.68502203,0.40693833){\makebox(0,0)[lb]{\smash{\tiny 60}}}%
    \put(0.72512885,0.40693833){\makebox(0,0)[lb]{\smash{\tiny 80}}}%
    \put(0.76211454,0.40693833){\makebox(0,0)[lb]{\smash{\tiny 100}}}%
    \put(0.80222137,0.40693833){\makebox(0,0)[lb]{\smash{\tiny 120}}}%
    \put(0.84232709,0.40693833){\makebox(0,0)[lb]{\smash{\tiny 140}}}%
    \put(0.88243392,0.40693833){\makebox(0,0)[lb]{\smash{\tiny 160}}}%
    \put(0.92254075,0.40693833){\makebox(0,0)[lb]{\smash{\tiny 180}}}%
    \put(0.96264758,0.40693833){\makebox(0,0)[lb]{\smash{\tiny 200}}}%
    \put(0.53559141,0.41620815){\makebox(0,0)[lb]{\smash{\tiny 0.18}}}%
    \put(0.54174009,0.45246035){\makebox(0,0)[lb]{\smash{\tiny 0.2}}}%
    \put(0.53559141,0.48862004){\makebox(0,0)[lb]{\smash{\tiny 0.22}}}%
    \put(0.53559141,0.52487225){\makebox(0,0)[lb]{\smash{\tiny 0.24}}}%
    \put(0.53559141,0.56103194){\makebox(0,0)[lb]{\smash{\tiny 0.26}}}%
    \put(0.53559141,0.59719163){\makebox(0,0)[lb]{\smash{\tiny 0.28}}}%
    \put(0.54174009,0.63344383){\makebox(0,0)[lb]{\smash{\tiny 0.3}}}%
    \put(0.53559141,0.66960352){\makebox(0,0)[lb]{\smash{\tiny 0.32}}}%
    \put(0.53559141,0.70576432){\makebox(0,0)[lb]{\smash{\tiny 0.34}}}%
  \end{picture}%
\endgroup%
\end{center}
\caption{\label{fig:ball_rates} The expansion rate test for the various approaches on the quasi-one-dimensional two disk model. The second Oseledets vector, $w_2^{(N)}(x)\in W_2^{(N)}(x)$, is approximated { using cocycle data $\{A(T^{-N}x),\ldots,A(T^Nx)\}$} and we plot $\frac{1}{m}\log\norm{A(x,m)w_2^{(N)}(x)}$ versus $m$.  If the approximation is accurate this quantity should tend to the value of $\lambda_2 \approx 0.210$, otherwise it would tend to the value of $\lambda_1 \approx 0.325$ both of which are shown in blue.  The plots shown are for $N=25$ (top left), $N=75$ (top right) and $N=150$ (bottom).}
\end{figure}

Most algorithms perform similarly regarding expansion rate.  { Note that the amount of cocycle data (size of $N$) needed to perform well in the Expansion Rate test is less than that needed to perform well in the Equivariance test - this demonstrates the importance of good performance in both tests in order to assess whether or not the algorithms are performing well.}

\subsection{Case Study 3: Time-dependent fluid flow in a cylinder;  a transfer operator description\label{sec:fluidflow}}

An important emerging application for Oseledets subspaces is the detection of strange eigenmodes, persistent patterns, and coherent sets for aperiodic time-dependent fluid flows.
In the periodic setting  strange eigenmodes have been found as eigenfunctions of a Perron-Frobenius operator via classical Floquet theory;  \cite{pikovsky_popovych_03,liu_haller_04,popovych_pikovsky_eckhardt_07}.
However, in the aperiodic time-dependent setting, Floquet theory cannot be applied.
An extension to aperiodically driven flows was derived in \cite{FLS10}, based on the new multiplicative ergodic theory of \cite{Froyland2010}.
Discrete approximations of a Perron-Frobenius cocycle representing the aperiodic flow are constructed and in this aperiodic setting the leading sub-dominant Oseledets subspaces play the role of the leading sub-dominant eigenfunctions in the periodic forcing case.

We review the four methods of approximating Oseledets subspaces with the aperiodically driven cylinder flow from \cite{FLS10}.
The flow domain is $Y=[0,2\pi]\times[0,\pi]$, $t\in\mathbb{R}^+$ and the flow is defined by the following forced ODE:
\begin{equation}\label{eq::travellingwavemixing}
\begin{split}
\dot{x} &=  c-\tilde{A}(\tilde{z}(t))\sin(x-\nu\tilde{z}(t))\cos(y)+\varepsilon G(g(x,y,\tilde{z}(t)))\sin(\tilde{z}(t)/2)\qquad\mod{2\pi}\\
\dot{y} &=  \tilde{A}(\tilde{z}(t))\cos(x-\nu \tilde{z}(t))\sin(y).\\
\end{split}
\end{equation}
Here, $\tilde{z}(t)=6.6685z_1(t)$, where $z_1(t)$ is generated by the standard Lorenz flow, $\tilde{A}(\tilde{z}(t))=1+0.125\sin(\sqrt{5}\tilde{z}(t))$,
$G(\psi):=1/{(\psi^2+1)}^2$ and the parameter function
$\psi=g(x,y,\tilde{z}(t)):=\sin(x-\nu\tilde{z}(t))\sin(y)+y/2-\pi/4$ vanishes at
the level set of the streamfunction of the unperturbed ($\varepsilon=0$) flow at
instantaneous time $t=0$, i.e., $s(x,y,0)=\pi/4$, which divides the
phase space in half.

\begin{figure}[!hbt]
  \centering
  \def\svgwidth{\textwidth}
\begingroup%
  \makeatletter%
  \providecommand\color[2][]{%
    \errmessage{(Inkscape) Color is used for the text in Inkscape, but the package 'color.sty' is not loaded}%
    \renewcommand\color[2][]{}%
  }%
  \providecommand\transparent[1]{%
    \errmessage{(Inkscape) Transparency is used (non-zero) for the text in Inkscape, but the package 'transparent.sty' is not loaded}%
    \renewcommand\transparent[1]{}%
  }%
  \providecommand\rotatebox[2]{#2}%
  \ifx\svgwidth\undefined%
    \setlength{\unitlength}{855bp}%
    \ifx\svgscale\undefined%
      \relax%
    \else%
      \setlength{\unitlength}{\unitlength * \real{\svgscale}}%
    \fi%
  \else%
    \setlength{\unitlength}{\svgwidth}%
  \fi%
  \global\let\svgwidth\undefined%
  \global\let\svgscale\undefined%
  \makeatother%
  \begin{picture}(1,0.75906433)%
    \put(0,0){\includegraphics[width=\unitlength]{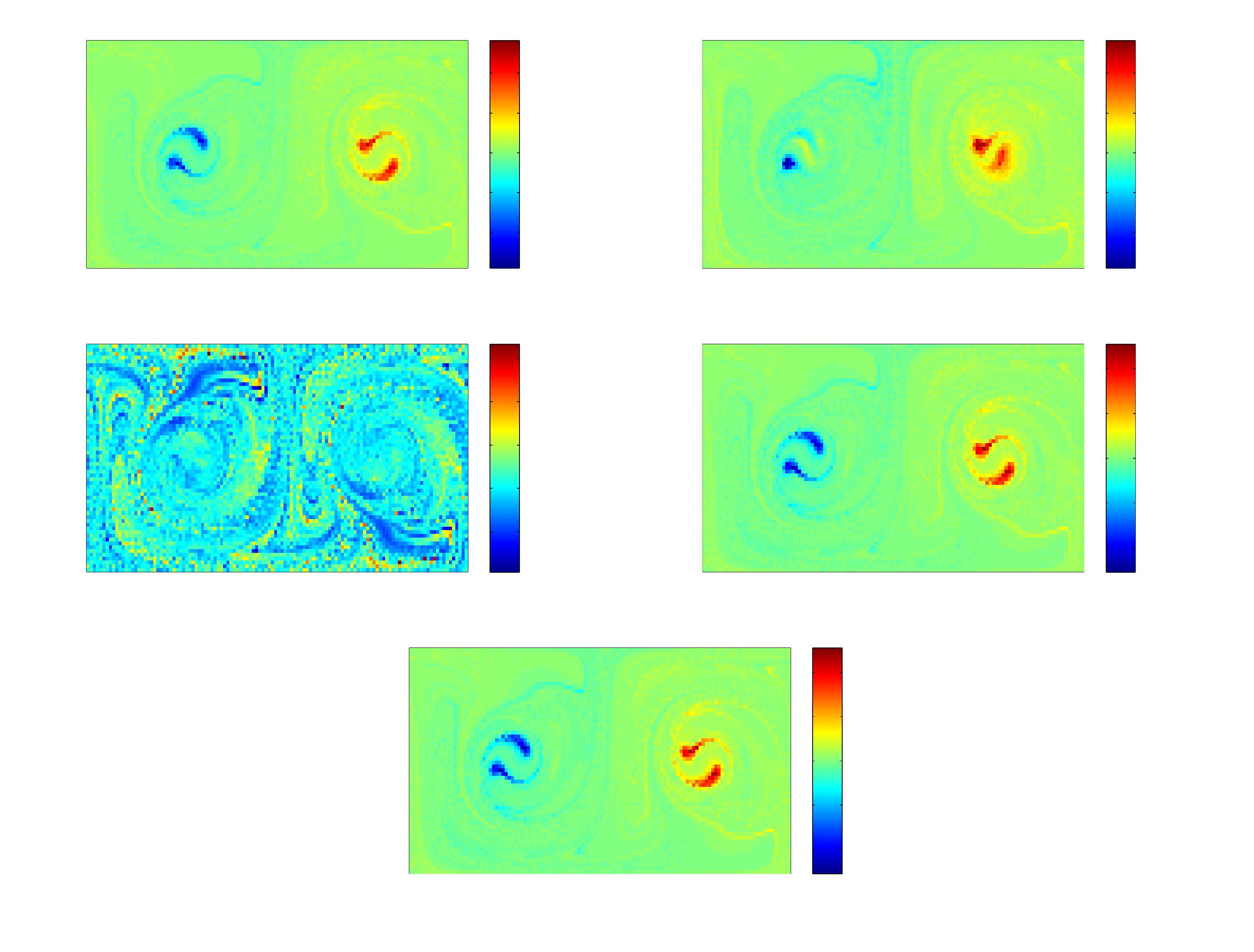}}%
    \put(0.06452257,0.52660819){\makebox(0,0)[lb]{\smash{\tiny 0}}}%
    \put(0.16130643,0.52660819){\makebox(0,0)[lb]{\smash{\tiny 2}}}%
    \put(0.25809006,0.52660819){\makebox(0,0)[lb]{\smash{\tiny 4}}}%
    \put(0.35497076,0.52660819){\makebox(0,0)[lb]{\smash{\tiny 6}}}%
    \put(0.05653006,0.53908421){\makebox(0,0)[lb]{\smash{\tiny 0}}}%
    \put(0.05653006,0.59697895){\makebox(0,0)[lb]{\smash{\tiny 1}}}%
    \put(0.05653006,0.65477661){\makebox(0,0)[lb]{\smash{\tiny 2}}}%
    \put(0.05653006,0.7125731){\makebox(0,0)[lb]{\smash{\tiny 3}}}%
    \put(0.21773918,0.51267135){\makebox(0,0)[lb]{\smash{(a) }}}%
    \put(0.41754386,0.56764211){\makebox(0,0)[lb]{\smash{\tiny $-0.1$}}}%
    \put(0.41754386,0.59931813){\makebox(0,0)[lb]{\smash{\tiny $-0.05$}}}%
    \put(0.41754386,0.6310924){\makebox(0,0)[lb]{\smash{\tiny 0}}}%
    \put(0.41754386,0.6628655){\makebox(0,0)[lb]{\smash{\tiny 0.05}}}%
    \put(0.41754386,0.69463977){\makebox(0,0)[lb]{\smash{\tiny 0.1}}}%
    \put(0.55584795,0.52660819){\makebox(0,0)[lb]{\smash{\tiny 0}}}%
    \put(0.65263158,0.52660819){\makebox(0,0)[lb]{\smash{\tiny 2}}}%
    \put(0.7494152,0.52660819){\makebox(0,0)[lb]{\smash{\tiny 4}}}%
    \put(0.84629591,0.52660819){\makebox(0,0)[lb]{\smash{\tiny 6}}}%
    \put(0.54785614,0.53908421){\makebox(0,0)[lb]{\smash{\tiny 0}}}%
    \put(0.54785614,0.59697895){\makebox(0,0)[lb]{\smash{\tiny 1}}}%
    \put(0.54785614,0.65477661){\makebox(0,0)[lb]{\smash{\tiny 2}}}%
    \put(0.54785614,0.7125731){\makebox(0,0)[lb]{\smash{\tiny 3}}}%
    \put(0.70906433,0.51267135){\makebox(0,0)[lb]{\smash{(b) }}}%
    \put(0.90886901,0.56764211){\makebox(0,0)[lb]{\smash{\tiny $-0.1$}}}%
    \put(0.90886901,0.59931813){\makebox(0,0)[lb]{\smash{\tiny $-0.05$}}}%
    \put(0.90886901,0.6310924){\makebox(0,0)[lb]{\smash{\tiny 0}}}%
    \put(0.90886901,0.6628655){\makebox(0,0)[lb]{\smash{\tiny 0.05}}}%
    \put(0.90886901,0.69463977){\makebox(0,0)[lb]{\smash{\tiny 0.1}}}%
    \put(0.06452257,0.2843076){\makebox(0,0)[lb]{\smash{\tiny 0}}}%
    \put(0.16130643,0.2843076){\makebox(0,0)[lb]{\smash{\tiny 2}}}%
    \put(0.25809006,0.2843076){\makebox(0,0)[lb]{\smash{\tiny 4}}}%
    \put(0.35497076,0.2843076){\makebox(0,0)[lb]{\smash{\tiny 6}}}%
    \put(0.05653006,0.29678363){\makebox(0,0)[lb]{\smash{\tiny 0}}}%
    \put(0.05653006,0.35467836){\makebox(0,0)[lb]{\smash{\tiny 1}}}%
    \put(0.05653006,0.41247602){\makebox(0,0)[lb]{\smash{\tiny 2}}}%
    \put(0.05653006,0.47027251){\makebox(0,0)[lb]{\smash{\tiny 3}}}%
    \put(0.21803158,0.27037076){\makebox(0,0)[lb]{\smash{(c) }}}%
    \put(0.41754386,0.32894737){\makebox(0,0)[lb]{\smash{\tiny 0}}}%
    \put(0.41754386,0.36354737){\makebox(0,0)[lb]{\smash{\tiny 0.01}}}%
    \put(0.41754386,0.39805029){\makebox(0,0)[lb]{\smash{\tiny 0.02}}}%
    \put(0.41754386,0.43265146){\makebox(0,0)[lb]{\smash{\tiny 0.03}}}%
    \put(0.41754386,0.46725146){\makebox(0,0)[lb]{\smash{\tiny 0.04}}}%
    \put(0.55584795,0.2843076){\makebox(0,0)[lb]{\smash{\tiny 0}}}%
    \put(0.65263158,0.2843076){\makebox(0,0)[lb]{\smash{\tiny 2}}}%
    \put(0.7494152,0.2843076){\makebox(0,0)[lb]{\smash{\tiny 4}}}%
    \put(0.84629591,0.2843076){\makebox(0,0)[lb]{\smash{\tiny 6}}}%
    \put(0.54785614,0.29678363){\makebox(0,0)[lb]{\smash{\tiny 0}}}%
    \put(0.54785614,0.35467836){\makebox(0,0)[lb]{\smash{\tiny 1}}}%
    \put(0.54785614,0.41247602){\makebox(0,0)[lb]{\smash{\tiny 2}}}%
    \put(0.54785614,0.47027251){\makebox(0,0)[lb]{\smash{\tiny 3}}}%
    \put(0.70906433,0.27037076){\makebox(0,0)[lb]{\smash{(d) }}}%
    \put(0.90886901,0.31637427){\makebox(0,0)[lb]{\smash{\tiny $-0.1$}}}%
    \put(0.90886901,0.35204678){\makebox(0,0)[lb]{\smash{\tiny $-0.05$}}}%
    \put(0.90886901,0.38762222){\makebox(0,0)[lb]{\smash{\tiny 0}}}%
    \put(0.90886901,0.42319649){\makebox(0,0)[lb]{\smash{\tiny 0.05}}}%
    \put(0.90886901,0.45886901){\makebox(0,0)[lb]{\smash{\tiny 0.1}}}%
    \put(0.32192982,0.04376234){\makebox(0,0)[lb]{\smash{\tiny 0}}}%
    \put(0.41871345,0.04376234){\makebox(0,0)[lb]{\smash{\tiny 2}}}%
    \put(0.51549708,0.04376234){\makebox(0,0)[lb]{\smash{\tiny 4}}}%
    \put(0.6122807,0.04376234){\makebox(0,0)[lb]{\smash{\tiny 6}}}%
    \put(0.31393801,0.05623766){\makebox(0,0)[lb]{\smash{\tiny 0}}}%
    \put(0.31393801,0.1135476){\makebox(0,0)[lb]{\smash{\tiny 1}}}%
    \put(0.31393801,0.17085731){\makebox(0,0)[lb]{\smash{\tiny 2}}}%
    \put(0.31393801,0.22816725){\makebox(0,0)[lb]{\smash{\tiny 3}}}%
    \put(0.4751462,0.02982456){\makebox(0,0)[lb]{\smash{(e) }}}%
    \put(0.6748538,0.07563368){\makebox(0,0)[lb]{\smash{\tiny $-0.1$}}}%
    \put(0.6748538,0.11091602){\makebox(0,0)[lb]{\smash{\tiny $-0.05$}}}%
    \put(0.6748538,0.14619883){\makebox(0,0)[lb]{\smash{\tiny 0}}}%
    \put(0.6748538,0.18157895){\makebox(0,0)[lb]{\smash{\tiny 0.05}}}%
    \put(0.6748538,0.21686199){\makebox(0,0)[lb]{\smash{\tiny 0.1}}}%
  \end{picture}%
\endgroup%
  \caption{The second Oseledets subspace as determined by (a) Algorithm \ref{alg:svd2}, (b) Algorithm \ref{alg:dichproj1}, (c) Algorithm \ref{A2}, (d) Algorithm \ref{alg:ginelli} and (e) Algorithm \ref{alg:wolfe}.}
  \label{fig:fluid_flow_1}
\end{figure}

We set
$\varepsilon=1$ as this value is sufficiently large to ensure no KAM
tori remain in the jet regime, but sufficiently small to maintain
islands originating from the nested periodic orbits around the
elliptic points of the unperturbed system.

We construct the discretised Perron-Frobenius matrices $P_x^{(\tau)}(t)=:A(x)$ as described in Section 3 of \cite{FLS10}, and briefly recapped in Example \ref{eg1.2}, using a uniform grid of $120\times 60$ boxes, $\tau=8$ and $-32\le t \le 32$. 
In total, we generate 8 such matrices of dimension $7200\times 7200$.
Thus, in this case study we have a limited amount of data, no symmetry, high dimension, and the matrices are non-invertible and sparse.

In order to obtain reasonable results we executed Algorithms \ref{alg:svd2}, \ref{alg:dichproj1}, \ref{A2}, \ref{alg:ginelli} and \ref{alg:wolfe} with the following parameters:

\begin{itemize}
\item \textbf{Algorithm \ref{alg:svd2}:} $M=N=4$ and $\{N_k\} = \{2,4\}$.
\item \textbf{Algorithm \ref{alg:dichproj1}:} We estimate the three largest Lyapunov exponents $\lambda_1 > \lambda_2 >\lambda_3$ and set $\Lambda^\text{right} = {\lambda_2 + 0.1(\lambda_1 - \lambda_2)}$ and $\Lambda^\text{left} = {\lambda_2 - 0.1(\lambda_2 - \lambda_3})$.
\item \textbf{Algorithm \ref{A2}:} As for Algorithm \ref{alg:dichproj1}.
\item \textbf{Algorithm \ref{alg:ginalt}:} $M' = M = 4$ (so that only an SVD is used, and no push-forward step), $N=4$ and $c' = (0,1)$.
\item \textbf{Algorithm \ref{alg:wolfe}:} { $M_1=M_1'=4$ and $M_2 = 4$.}
\end{itemize}

\begin{figure}[p]
  \floatpagestyle{empty}
  \centering
  \def\svgwidth{0.9\textwidth}
\begingroup%
  \makeatletter%
  \providecommand\color[2][]{%
    \errmessage{(Inkscape) Color is used for the text in Inkscape, but the package 'color.sty' is not loaded}%
    \renewcommand\color[2][]{}%
  }%
  \providecommand\transparent[1]{%
    \errmessage{(Inkscape) Transparency is used (non-zero) for the text in Inkscape, but the package 'transparent.sty' is not loaded}%
    \renewcommand\transparent[1]{}%
  }%
  \providecommand\rotatebox[2]{#2}%
  \ifx\svgwidth\undefined%
    \setlength{\unitlength}{665.03886719bp}%
    \ifx\svgscale\undefined%
      \relax%
    \else%
      \setlength{\unitlength}{\unitlength * \real{\svgscale}}%
    \fi%
  \else%
    \setlength{\unitlength}{\svgwidth}%
  \fi%
  \global\let\svgwidth\undefined%
  \global\let\svgscale\undefined%
  \makeatother%
  \begin{picture}(1,1.55621588)%
    \put(0,0){\includegraphics[width=\unitlength]{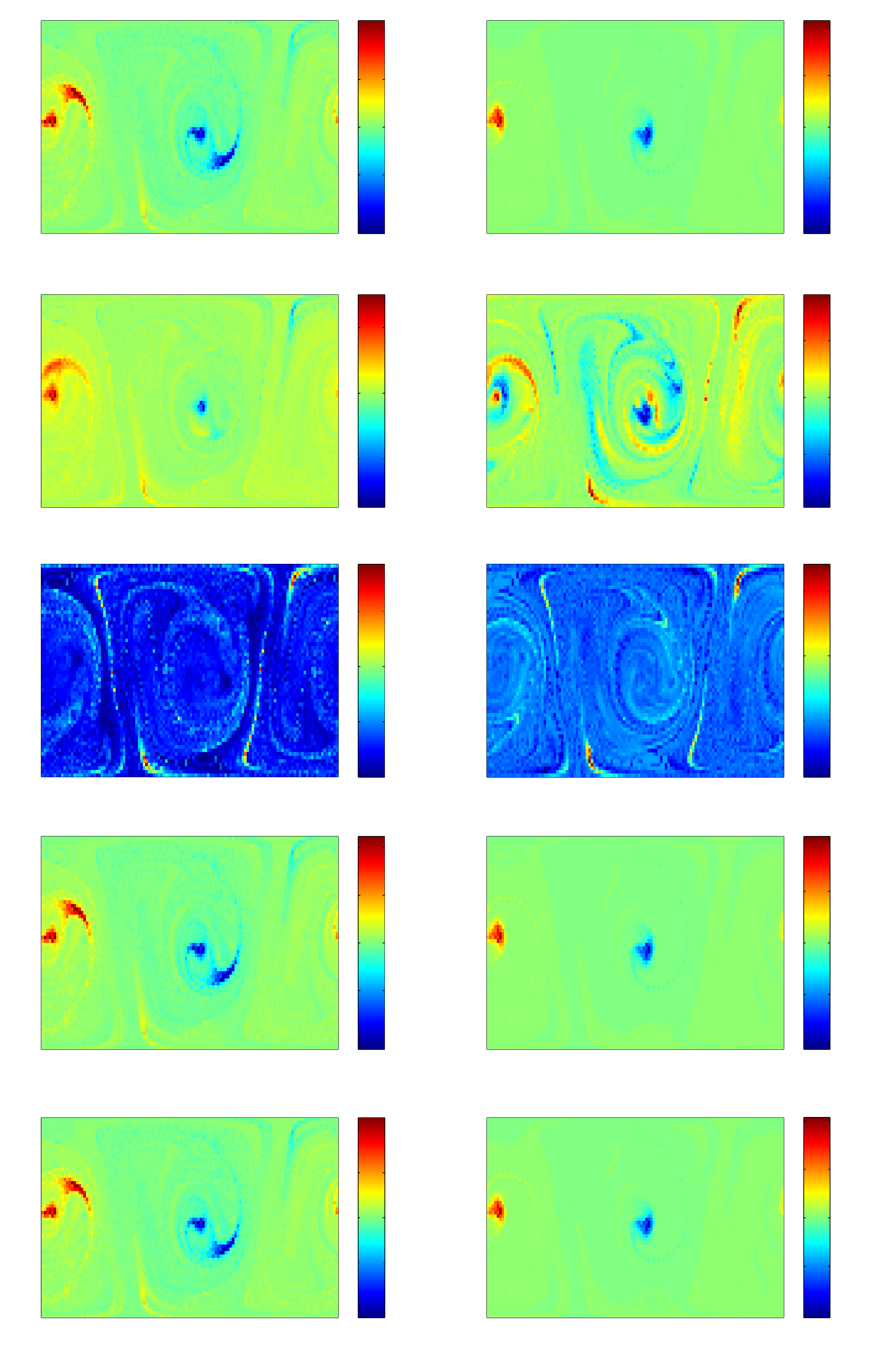}}%
    \put(0.04110015,1.26763605){\makebox(0,0)[lb]{\smash{\tiny 0}}}%
    \put(0.14823675,1.26763605){\makebox(0,0)[lb]{\smash{\tiny 2}}}%
    \put(0.25549935,1.26763605){\makebox(0,0)[lb]{\smash{\tiny 4}}}%
    \put(0.36263595,1.26763605){\makebox(0,0)[lb]{\smash{\tiny 6}}}%
    \put(0.03082527,1.28367541){\makebox(0,0)[lb]{\smash{\tiny 0}}}%
    \put(0.03082527,1.36061347){\makebox(0,0)[lb]{\smash{\tiny 1}}}%
    \put(0.03082527,1.43742583){\makebox(0,0)[lb]{\smash{\tiny 2}}}%
    \put(0.03082527,1.51423788){\makebox(0,0)[lb]{\smash{\tiny 3}}}%
    \put(0.19672986,1.2497177){\makebox(0,0)[lb]{\smash{(1a) }}}%
    \put(0.44057575,1.2959556){\makebox(0,0)[lb]{\smash{\tiny $-0.1$}}}%
    \put(0.44057575,1.35021288){\makebox(0,0)[lb]{\smash{\tiny $-0.05$}}}%
    \put(0.44057575,1.40434505){\makebox(0,0)[lb]{\smash{\tiny 0}}}%
    \put(0.44057575,1.45847723){\makebox(0,0)[lb]{\smash{\tiny 0.05}}}%
    \put(0.44057575,1.51273421){\makebox(0,0)[lb]{\smash{\tiny 0.1}}}%
    \put(0.54633348,1.26763605){\makebox(0,0)[lb]{\smash{\tiny 0}}}%
    \put(0.65347008,1.26763605){\makebox(0,0)[lb]{\smash{\tiny 2}}}%
    \put(0.76073298,1.26763605){\makebox(0,0)[lb]{\smash{\tiny 4}}}%
    \put(0.86786958,1.26763605){\makebox(0,0)[lb]{\smash{\tiny 6}}}%
    \put(0.53605889,1.28367541){\makebox(0,0)[lb]{\smash{\tiny 0}}}%
    \put(0.53605889,1.36061347){\makebox(0,0)[lb]{\smash{\tiny 1}}}%
    \put(0.53605889,1.43742583){\makebox(0,0)[lb]{\smash{\tiny 2}}}%
    \put(0.53605889,1.51423788){\makebox(0,0)[lb]{\smash{\tiny 3}}}%
    \put(0.70196348,1.2497177){\makebox(0,0)[lb]{\smash{(1b)
 }}}%
    \put(0.94580938,1.2876854){\makebox(0,0)[lb]{\smash{\tiny $-0.2$}}}%
    \put(0.94580938,1.34595268){\makebox(0,0)[lb]{\smash{\tiny $-0.1$}}}%
    \put(0.94580938,1.40434505){\makebox(0,0)[lb]{\smash{\tiny 0}}}%
    \put(0.94580938,1.46273713){\makebox(0,0)[lb]{\smash{\tiny 0.1}}}%
    \put(0.94580938,1.5210044){\makebox(0,0)[lb]{\smash{\tiny 0.2}}}%
    \put(0.04115858,0.95691727){\makebox(0,0)[lb]{\smash{\tiny 0}}}%
    \put(0.14829517,0.95691727){\makebox(0,0)[lb]{\smash{\tiny 2}}}%
    \put(0.25555778,0.95691727){\makebox(0,0)[lb]{\smash{\tiny 4}}}%
    \put(0.36269437,0.95691727){\makebox(0,0)[lb]{\smash{\tiny 6}}}%
    \put(0.03088369,0.97295664){\makebox(0,0)[lb]{\smash{\tiny 0}}}%
    \put(0.03088369,1.0498947){\makebox(0,0)[lb]{\smash{\tiny 1}}}%
    \put(0.03088369,1.12670705){\makebox(0,0)[lb]{\smash{\tiny 2}}}%
    \put(0.03088369,1.2035191){\makebox(0,0)[lb]{\smash{\tiny 3}}}%
    \put(0.1929043,0.93899892){\makebox(0,0)[lb]{\smash{(2a)}}}%
    \put(0.44063418,1.02771554){\makebox(0,0)[lb]{\smash{\tiny $-0.1$}}}%
    \put(0.44063418,1.10227239){\makebox(0,0)[lb]{\smash{\tiny 0}}}%
    \put(0.44063418,1.17682893){\makebox(0,0)[lb]{\smash{\tiny 0.1}}}%
    \put(0.54639191,0.95691727){\makebox(0,0)[lb]{\smash{\tiny 0}}}%
    \put(0.6535285,0.95691727){\makebox(0,0)[lb]{\smash{\tiny 2}}}%
    \put(0.7607914,0.95691727){\makebox(0,0)[lb]{\smash{\tiny 4}}}%
    \put(0.867928,0.95691727){\makebox(0,0)[lb]{\smash{\tiny 6}}}%
    \put(0.53611732,0.97295664){\makebox(0,0)[lb]{\smash{\tiny 0}}}%
    \put(0.53611732,1.0498947){\makebox(0,0)[lb]{\smash{\tiny 1}}}%
    \put(0.53611732,1.12670705){\makebox(0,0)[lb]{\smash{\tiny 2}}}%
    \put(0.53611732,1.2035191){\makebox(0,0)[lb]{\smash{\tiny 3}}}%
    \put(0.70415261,0.93899892){\makebox(0,0)[lb]{\smash{(2b)}}}%
    \put(0.94586781,1.03297839){\makebox(0,0)[lb]{\smash{\tiny $-0.05$}}}%
    \put(0.94586781,1.09751026){\makebox(0,0)[lb]{\smash{\tiny 0}}}%
    \put(0.94586781,1.16204333){\makebox(0,0)[lb]{\smash{\tiny 0.05}}}%
    \put(0.04110015,0.65095021){\makebox(0,0)[lb]{\smash{\tiny 0}}}%
    \put(0.14823675,0.65095021){\makebox(0,0)[lb]{\smash{\tiny 2}}}%
    \put(0.25549935,0.65095021){\makebox(0,0)[lb]{\smash{\tiny 4}}}%
    \put(0.36263595,0.65095021){\makebox(0,0)[lb]{\smash{\tiny 6}}}%
    \put(0.03082527,0.66698957){\makebox(0,0)[lb]{\smash{\tiny 0}}}%
    \put(0.03082527,0.74392763){\makebox(0,0)[lb]{\smash{\tiny 1}}}%
    \put(0.03082527,0.82073999){\makebox(0,0)[lb]{\smash{\tiny 2}}}%
    \put(0.03082527,0.89755204){\makebox(0,0)[lb]{\smash{\tiny 3}}}%
    \put(0.19886056,0.63303186){\makebox(0,0)[lb]{\smash{(3a)}}}%
    \put(0.44057575,0.66686447){\makebox(0,0)[lb]{\smash{\tiny 0}}}%
    \put(0.44057575,0.72964275){\makebox(0,0)[lb]{\smash{\tiny 0.02}}}%
    \put(0.44057575,0.79229503){\makebox(0,0)[lb]{\smash{\tiny 0.04}}}%
    \put(0.44057575,0.85494851){\makebox(0,0)[lb]{\smash{\tiny 0.06}}}%
    \put(0.54633348,0.6507){\makebox(0,0)[lb]{\smash{\tiny 0}}}%
    \put(0.65347008,0.6507){\makebox(0,0)[lb]{\smash{\tiny 2}}}%
    \put(0.76073298,0.6507){\makebox(0,0)[lb]{\smash{\tiny 4}}}%
    \put(0.86786958,0.6507){\makebox(0,0)[lb]{\smash{\tiny 6}}}%
    \put(0.53605889,0.66673876){\makebox(0,0)[lb]{\smash{\tiny 0}}}%
    \put(0.53605889,0.74367682){\makebox(0,0)[lb]{\smash{\tiny 1}}}%
    \put(0.53605889,0.82061518){\makebox(0,0)[lb]{\smash{\tiny 2}}}%
    \put(0.53605889,0.89755204){\makebox(0,0)[lb]{\smash{\tiny 3}}}%
    \put(0.70822928,0.63278105){\makebox(0,0)[lb]{\smash{(3b)}}}%
    \put(0.94580938,0.73152234){\makebox(0,0)[lb]{\smash{\tiny 0}}}%
    \put(0.94580938,0.80445071){\makebox(0,0)[lb]{\smash{\tiny 0.05}}}%
    \put(0.94580938,0.87737878){\makebox(0,0)[lb]{\smash{\tiny 0.1}}}%
    \put(0.04110015,0.34208864){\makebox(0,0)[lb]{\smash{\tiny 0}}}%
    \put(0.14823675,0.34208864){\makebox(0,0)[lb]{\smash{\tiny 2}}}%
    \put(0.25549935,0.34208864){\makebox(0,0)[lb]{\smash{\tiny 4}}}%
    \put(0.36263595,0.34208864){\makebox(0,0)[lb]{\smash{\tiny 6}}}%
    \put(0.03082527,0.35812801){\makebox(0,0)[lb]{\smash{\tiny 0}}}%
    \put(0.03082527,0.43506606){\makebox(0,0)[lb]{\smash{\tiny 1}}}%
    \put(0.03082527,0.51187842){\makebox(0,0)[lb]{\smash{\tiny 2}}}%
    \put(0.03082527,0.58869047){\makebox(0,0)[lb]{\smash{\tiny 3}}}%
    \put(0.19886056,0.32417029){\makebox(0,0)[lb]{\smash{(4a)}}}%
    \put(0.44057575,0.37040819){\makebox(0,0)[lb]{\smash{\tiny $-0.1$}}}%
    \put(0.44057575,0.42466547){\makebox(0,0)[lb]{\smash{\tiny $-0.05$}}}%
    \put(0.44057575,0.47879764){\makebox(0,0)[lb]{\smash{\tiny 0}}}%
    \put(0.44057575,0.53292982){\makebox(0,0)[lb]{\smash{\tiny 0.05}}}%
    \put(0.44057575,0.5871868){\makebox(0,0)[lb]{\smash{\tiny 0.1}}}%
    \put(0.54633348,0.34208864){\makebox(0,0)[lb]{\smash{\tiny 0}}}%
    \put(0.65347008,0.34208864){\makebox(0,0)[lb]{\smash{\tiny 2}}}%
    \put(0.76073298,0.34208864){\makebox(0,0)[lb]{\smash{\tiny 4}}}%
    \put(0.86786958,0.34208864){\makebox(0,0)[lb]{\smash{\tiny 6}}}%
    \put(0.53605889,0.35812801){\makebox(0,0)[lb]{\smash{\tiny 0}}}%
    \put(0.53605889,0.43506606){\makebox(0,0)[lb]{\smash{\tiny 1}}}%
    \put(0.53605889,0.51187842){\makebox(0,0)[lb]{\smash{\tiny 2}}}%
    \put(0.53605889,0.58869047){\makebox(0,0)[lb]{\smash{\tiny 3}}}%
    \put(0.7044701,0.32417029){\makebox(0,0)[lb]{\smash{(4b)}}}%
    \put(0.94580938,0.362138){\makebox(0,0)[lb]{\smash{\tiny $-0.2$}}}%
    \put(0.94580938,0.42040527){\makebox(0,0)[lb]{\smash{\tiny $-0.1$}}}%
    \put(0.94580938,0.47879764){\makebox(0,0)[lb]{\smash{\tiny 0}}}%
    \put(0.94580938,0.53718972){\makebox(0,0)[lb]{\smash{\tiny 0.1}}}%
    \put(0.94580938,0.59545699){\makebox(0,0)[lb]{\smash{\tiny 0.2}}}%
    \put(0.0411586,0.03744143){\makebox(0,0)[lb]{\smash{\tiny 0}}}%
    \put(0.14829519,0.03744143){\makebox(0,0)[lb]{\smash{\tiny 2}}}%
    \put(0.2555578,0.03744143){\makebox(0,0)[lb]{\smash{\tiny 4}}}%
    \put(0.36269439,0.03744143){\makebox(0,0)[lb]{\smash{\tiny 6}}}%
    \put(0.03088371,0.05348079){\makebox(0,0)[lb]{\smash{\tiny 0}}}%
    \put(0.03088371,0.12590723){\makebox(0,0)[lb]{\smash{\tiny 1}}}%
    \put(0.03088371,0.19820947){\makebox(0,0)[lb]{\smash{\tiny 2}}}%
    \put(0.03088371,0.27051051){\makebox(0,0)[lb]{\smash{\tiny 3}}}%
    \put(0.19979564,0.01952247){\makebox(0,0)[lb]{\smash{(5a)}}}%
    \put(0.4406342,0.06513424){\makebox(0,0)[lb]{\smash{\tiny $-0.1$}}}%
    \put(0.4406342,0.11613337){\makebox(0,0)[lb]{\smash{\tiny $-0.05$}}}%
    \put(0.4406342,0.16700829){\makebox(0,0)[lb]{\smash{\tiny 0}}}%
    \put(0.4406342,0.21800681){\makebox(0,0)[lb]{\smash{\tiny 0.05}}}%
    \put(0.4406342,0.26900684){\makebox(0,0)[lb]{\smash{\tiny 0.1}}}%
    \put(0.54639193,0.03744143){\makebox(0,0)[lb]{\smash{\tiny 0}}}%
    \put(0.65352852,0.03744143){\makebox(0,0)[lb]{\smash{\tiny 2}}}%
    \put(0.76079142,0.03744143){\makebox(0,0)[lb]{\smash{\tiny 4}}}%
    \put(0.86792802,0.03744143){\makebox(0,0)[lb]{\smash{\tiny 6}}}%
    \put(0.53611734,0.05348079){\makebox(0,0)[lb]{\smash{\tiny 0}}}%
    \put(0.53611734,0.12590723){\makebox(0,0)[lb]{\smash{\tiny 1}}}%
    \put(0.53611734,0.19820947){\makebox(0,0)[lb]{\smash{\tiny 2}}}%
    \put(0.53611734,0.27051051){\makebox(0,0)[lb]{\smash{\tiny 3}}}%
    \put(0.70916437,0.01952247){\makebox(0,0)[lb]{\smash{(5b)}}}%
    \put(0.94586783,0.05723997){\makebox(0,0)[lb]{\smash{\tiny $-0.2$}}}%
    \put(0.94586783,0.11212398){\makebox(0,0)[lb]{\smash{\tiny $-0.1$}}}%
    \put(0.94586783,0.16700829){\makebox(0,0)[lb]{\smash{\tiny 0}}}%
    \put(0.94586783,0.2220171){\makebox(0,0)[lb]{\smash{\tiny 0.1}}}%
    \put(0.94586783,0.27690111){\makebox(0,0)[lb]{\smash{\tiny 0.2}}}%
  \end{picture}%
\endgroup%
  \caption{\label{fig:fluid_flow_2} Comparing the approximations of the second Oseledets vector $w_2(Tx)$ at time $t=8$ with the push-forward of the approximations at time $t=0$.  Those labelled (a) are the push-forwards $A(x,1)w_2^{(4)}(x)$ whilst those labelled (b) are independently computed approximations $w_2^{(4)}(Tx)$ of $w_2(Tx)$.  The algorithms used are as follows: (1) Algorithm \ref{alg:svd2}, (2) Algorithm \ref{alg:dichproj1}, (3) Algorithm \ref{A2}, (4) Algorithm \ref{alg:ginelli} and (5) Algorithm \ref{alg:wolfe}.}
\end{figure}

The results of these numerical experiments are shown in Figures \ref{fig:fluid_flow_1} and \ref{fig:fluid_flow_2}.  Recall that in this setting, the cocycle $A(x,n)$ is a cocycle of discretised Perron-Frobenius operators acting on piecewise constant functions defined on $Y$;  we identify these piecewise constant functions (with 7200 pieces) with vectors in $\mathbb{R}^{7200}$.   Figure \ref{fig:fluid_flow_1} first shows the approximations of the second Oseledets vector $w_2(x)$ at time $t=0$.  In this setting the Oseledets vectors locate \emph{coherent structures}:  Figure \ref{fig:fluid_flow_2} compares the push-forward of the approximations in Figure \ref{fig:fluid_flow_1} with independently computed approximations of $w_2(Tx)$ - the second Oseledets vector at time $t=8$.

In this study the data sample is insufficiently long for Algorithm \ref{A2} to work effectively, but the other algorithms produce similar results.  A visual inspection of Figure \ref{fig:fluid_flow_2} shows that the highlighted structures are approximately equivariant/coherent.

\section{Conclusion}

We introduced two new methods for computing Oseledets subspaces:  one based on singular value decompositions and the other based on dichotomy projectors.
We also reviewed recent methods by Ginelli \emph{et al.} \cite{Ginelli2007} and Wolfe and Samelson \cite{Wolfe2007}, and presented an improvement to both of these approaches that intelligently selected initial bases when only short time series were available to compute with.
Finally, we carried out a comparative numerical investigation involving all four methods.

Generally speaking, we found that Algorithms \ref{alg:svd2}, \ref{alg:ginalt}, and \ref{alg:wolfe} outperformed the dichotomy projector methods (Algorithms \ref{alg:dichproj1} and \ref{A2}) when limited to moderate amounts of data were available, however, the dichotomy projector methods performed very well when long time series of matrices were available.
The Ginelli approach (in particular the improved Algorithm \ref{alg:ginalt}) also worked very well with long time series.

The improvements made to Algorithm \ref{alg:svd} in Section \ref{sec:svd2} (namely the orthogonalisation step in Algorithm \ref{alg:svd2}) produced an algorithm that could take advantage of longer matrix sequences and return very accurate results. { Of course, for each Algorithm} one must choose the associated parameters sensibly to ensure good results.

When only a short to moderate time series was available, we found mixed results in terms of the best algorithm.
The improved SVD approach (Algorithm \ref{alg:svd2}) was best for low to moderate length time series in the exact Toy model, while the improved Ginelli (Algorithm \ref{alg:ginalt}) and improved Wolfe {(Algorithm \ref{alg:wolfe})} were marginally best in terms of equivariance and expansion rate, respectively for the 2-disk model.
Each of these three algorithms produced similar results in the fluid-flow system.

Choosing appropriate parameters for a particular application can be difficult.  In the present review, good values were chosen by {educated} experimentation. On the other hand, the dichotomy projector methods, Algorithms \ref{alg:dichproj1} and \ref{A2}, use parameters ($\Lambda^\text{right}$ and $\Lambda^\text{left}$) which can be chosen in a deterministic manner - by estimating Lyapunov exponents, which is a {reasonably robust} numerical procedure.  Furthermore, {a} rigorous error approximation exists for Algorithm \ref{A2}, a feature currently lacking for Algorithms \ref{alg:svd2}, \ref{alg:ginalt} and \ref{alg:wolfe}.

The memory footprint of each approach scales quite differently with dimension.  In Section \ref{sec:fluidflow}, Algorithms \ref{alg:svd2} and \ref{alg:wolfe} { could take advantage of} the sparseness of the $d\times d$ generating matrices of the cocycle. { However, } since $A(x,n)$ is formed by matrix multiplication, for large $n$ the matrix $A(x,n)$ becomes dense and may require memory of the order of $d^2$ floating point numbers.  The dichotomy projector Algorithms \ref{alg:dichproj1} and \ref{A2}, need to form an $Nd \times (N+1)d$ matrix, but with sparse generating matrices, this requires memory much less than of the order of $d^2$ floating point numbers. Algorithm \ref{alg:ginalt} has the most conservative memory footprint, but depends on its initialisation parameter $M'$ and the Oselelets subspace number $j$.  If $M'$ is large, then $A(T^{-M}x,M')$ in Step 1 can become dense and require ${ \mathcal O (d^2)}$ floating point numbers.  On the other hand, the stationary Lyapunov basis requires $jd$ floating point numbers to be stored, so if $j\approx d$ this can becomes comparable to $d^2$.

Section \ref{sec:fluidflow} involves non-invertible generating matrices and { apart from} Algorithm \ref{A2}, each approach succeeded in producing a { reasonable} solution, showing that the Algorithms { can perform well in the non-invertible setting. Continuing with the non-invertible situation, if one wishes to approximate} Oseledets subspaces corresponding to negative numbers with very large magnitudes ($\lambda_j \approx -\infty$){, then} Algorithms \ref{alg:svd2} and \ref{alg:wolfe} { may struggle as} rapidly contracting directions (relative to the dominant direction corresponding to $\lambda_1$) are quickly squashed during the matrix multiplication used to approximate $A(x,n)$ leading to inaccurate numerical representation of $A(x,n)$.

The dichotomy projector approaches of Algorithms \ref{alg:dichproj1} and \ref{A2} are able to compute Oseledets subspaces corresponding to smaller, sub-dominant Lyapunov exponents $\lambda_3,\lambda_4, \ldots$ provided larger amounts of cocycle data is available.  However, if $\lambda_j \approx -\infty$, we are forced to choose $\Lambda^\text{right}$ or $\Lambda^\text{left} \approx -\infty$ which means either problem \eqref{al1} or \eqref{al2} (in Algorithm \ref{alg:dichproj1} which also feature in Algorithm \ref{A2}) are ill-conditioned and fail.

The same problem manifests itself in Algorithm \ref{alg:ginalt}, even though it is able to compute Oseledets subspaces corresponding to smaller, sub-dominant Lyapunov exponents.  The sum of the {logarithm of} the diagonal entries of the $j\times j$ generating matrices of the cocycle $R(x,n)$ average to the {logarithmic} expansion rate of the $j$-parallelepiped formed at $x$ by the stationary Lyapunov vectors $s_1^{(\infty)}(x),\ldots,s_j^{(\infty)}(x)$ as it is pushed-forward.  { Thus}, the { logarithm of the} $i$th diagonal entry of the generating matrices of $R(x,n)$ { has a time average of $\lambda_i$} \cite{benettin1980} {and} if $\lambda_j \approx -\infty$, $R(x,n)$ will feature diagonal entries close to, or equal to zero and $R(x,n)^{-1}$ won't exist.

In summary, Algorithms \ref{alg:svd2} and \ref{alg:wolfe} are best suited to situations with limited cocycle data when one of the most dominant Oseledets subspaces is desired.  Algorithm \ref{alg:ginalt} can be applied to both limited and { high} data situations by choosing $M'$ appropriately, and can compute most Oseledets subspaces provided their Lyapunov exponents are well-conditioned.  If ample data is available and information regarding the system is lacking (making the choice of parameters for the other approaches difficult), the approaches of Algorithms \ref{alg:dichproj1} and \ref{A2} may be preferred for their {relatively} deterministic parameter selection.


\begin{thebibliography}{10}

\bibitem{Arnoldbook}
L.~Arnold.
\newblock {\em Random Dynamical Systems}.
\newblock Springer, 1998.

\bibitem{as01}
B.~Aulbach and S.~Siegmund.
\newblock The dichotomy spectrum for noninvertible systems of linear difference
  equations.
\newblock {\em Journal of Difference Equations and Applications},
  7(6):895--913, 2001.
\newblock On the occasion of the 60th birthday of Calvin Ahlbrandt.

\bibitem{benettin1978}
G.~Benettin, L.~Galgani, A.~Giorgilli, and J.~M. Strelcyn.
\newblock All {L}yapunov characteristic numbers are effectively computable.
\newblock {\em C. R. Acad. Sci. Paris}, 286:431--433, 1978.

\bibitem{benettin1980}
G.~Benettin, L.~Galgani, A.~Giorgilli, and J.~M. Strelcyn.
\newblock {L}yapunov characteristic exponents for smooth dynamical systems and
  for {H}amiltonian systems: A method for computing all of them. part {I}:
  Theory. part {II}: Numerical application.
\newblock {\em Meccanica}, 15:9--30, 1980.

\bibitem{Chung2010}
T.~Chung, D.~Truant, and G.~P. Morriss.
\newblock From {L}yapunov modes to their exponents for hard disk systems.
\newblock {\em Physical Review E}, 81(6, Part 2), JUN 14 2010.

\bibitem{Colonius2000}
F.~Colonius and W.~Kliemann.
\newblock {\em The dynamics of control}.
\newblock Systems \& Control: Foundations \& Applications. Birkh\"auser Boston
  Inc., Boston, MA, 2000.
\newblock With an appendix by Lars Gr{\"u}ne.

\bibitem{co78}
W.~A. Coppel.
\newblock {\em Dichotomies in Stability Theory}.
\newblock Springer-Verlag, Berlin, 1978.
\newblock Lecture Notes in Mathematics, Vol. 629.

\bibitem{Cvitanovic2007}
P.~Cvitanovi\'{c}, R.~L. Davidchack, and E.~Siminos.
\newblock On the state space geometry of the {K}uramoto--{S}ivashinsky flow in
  a periodic domain.
\newblock {\em SIAM Journal on Applied Dynamical Systems}, 9:1--33, 2010.

\bibitem{DJ99}
M.~Dellnitz and O.~Junge.
\newblock On the approximation of complicated dynamical behaviour.
\newblock {\em SIAM Journal for Numerical Analysis}, 36(2):491--515, 1999.

\bibitem{dv02}
L.~Dieci and E.~S. Van~Vleck.
\newblock {L}yapunov spectral intervals: theory and computation.
\newblock {\em SIAM J. Numer. Anal.}, 40(2):516--542 (electronic), 2002.

\bibitem{Ershov1998}
S.~V. Ershov and A.~B. Potapov.
\newblock On the concept of stationary {L}yapunov basis.
\newblock {\em Physica D}, 118(3-4):167--198, 1998.

\bibitem{F05}
G.~Froyland.
\newblock Statistically optimal almost-invariant sets.
\newblock {\em Physica D}, 200:205--219, 2005.

\bibitem{FD03}
G.~Froyland and M.~Dellnitz.
\newblock Detecting and locating near-optimal almost-invariant sets and cycles.
\newblock {\em SIAM J. Sci. Comput.}, 24(6):1839--1863, 2003.

\bibitem{Froyland2010}
G.~Froyland, S.~Lloyd, and A.~Quas.
\newblock Coherent structures and isolated spectrum for {P}erron-{F}robenius
  cocycles.
\newblock {\em Ergodic Theory and Dynamical Systems}, 30(03):729--756, 2010.

\bibitem{FLS10}
G.~Froyland, S.~Lloyd, and N.~Santitissadeekorn.
\newblock Coherent sets for nonautonomous dynamical systems.
\newblock {\em Physica D}, 239(16):1527--1541, 2010.

\bibitem{FSM10}
G.~Froyland, N.~Santitissadeekorn, and A.~Monahan.
\newblock Transport in time-dependent dynamical systems: Finite-time coherent
  sets.
\newblock {\em Chaos}, 20:043116, 2010.

\bibitem{Ginelli2007}
F.~Ginelli, P.~Poggi, A.~Turchi, H.~Chat{\'{e}}, R.~Livi, and A.~Politi.
\newblock Characterizing dynamics with covariant {L}yapunov vectors.
\newblock {\em Physical Review Letters}, 99(13), September 28 2007.

\bibitem{hu08}
T.~H{\"u}ls.
\newblock Numerical computation of dichotomy rates and projectors in discrete
  time.
\newblock {\em Discrete Contin. Dyn. Syst. Ser. B}, 12(1):109--131, 2009.

\bibitem{hu09}
T.~H{\"u}ls.
\newblock Computing {S}acker-{S}ell spectra in discrete time dynamical systems.
\newblock {\em SIAM J. Numer. Anal.}, 48(6):2043--2064, 2010.

\bibitem{parlitz2011}
P.~V. Kuptsov and U.~Parlitz.
\newblock Theory and computation of covariant {L}yapunov vectors.
\newblock \url{http://arxiv.org/pdf/1105.5228v2.pdf}, 2011.

\bibitem{liu_haller_04}
W.~Liu and G.~Haller.
\newblock Strange eigenmodes and decay of variance in the mixing of diffusive
  tracers.
\newblock {\em Physica D}, 188(1):1--39, 2004.

\bibitem{Morriss2009}
G.~P. Morriss and D.~Truant.
\newblock Characterization of the {L}yapunov modes for an equilibrium
  quasi-one-dimensional system.
\newblock {\em Journal of Statistical Mechanics {--} Theory and Experiment},
  (2):P02029, February 2009.

\bibitem{Oseledec1968}
V.~I. Oseledec.
\newblock A multiplicative ergodic theorem. {L}yapunov characteristic numbers
  for dynamical systems.
\newblock {\em Trans. Moscow Math. Soc}, 1968.

\bibitem{pa88}
K.~J. Palmer.
\newblock Exponential dichotomies, the shadowing lemma and transversal
  homoclinic points.
\newblock In {\em Dynamics reported, Vol.\ 1}, pages 265--306. Teubner,
  Stuttgart, 1988.

\bibitem{pikovsky_popovych_03}
A.~Pikovsky and O.~Popovych.
\newblock Persistent patterns in deterministic mixing flows.
\newblock {\em EPL (Europhysics Letters)}, 61(5):625, 2003.

\bibitem{popovych_pikovsky_eckhardt_07}
O.~Popovych, A.~Pikovsky, and B.~Eckhardt.
\newblock Abnormal mixing of passive scalars in chaotic flows.
\newblock {\em Physical Review E}, 75(3):036308, 2007.

\bibitem{Robinson2008}
D.~J. Robinson and G.~P. Morriss.
\newblock {L}yapunov mode dynamics in hard-disk systems.
\newblock {\em Journal of Statistical Physics}, 131(1):1--31, April 2008.

\bibitem{ss78}
R.~J. Sacker and G.~R. Sell.
\newblock A spectral theory for linear differential systems.
\newblock {\em J. Differential Equations}, 27(3):320--358, 1978.

\bibitem{shimada1979}
I.~Shimada and T.~Nagashima.
\newblock A numerical approach to ergodic problem of dissipative dynamical
  systems.
\newblock {\em Prog. Theor. Phys.}, 61:1605--1616, 1979.

\bibitem{sst72}
N.~Shinozaki, M.~Sibuya, and K.~Tanabe.
\newblock Numerical algorithms for the {M}oore-{P}enrose inverse of a matrix:
  direct methods.
\newblock {\em Ann. Inst. Statist. Math.}, 24:193--203, 1972.

\bibitem{Taniguchi2005}
T.~Taniguchi and G.~P. Morriss.
\newblock Time-dependent mode structure for {L}yapunov vectors as a collective
  movement in quasi-one-dimensional systems.
\newblock {\em Physical Review E}, 71(1):016218, Jan 2005.

\bibitem{Taniguchi2005a}
T.~Taniguchi and G.~P. Morriss.
\newblock Time-oscillating {L}yapunov modes and the momentum autocorrelation
  function.
\newblock {\em Physical Review Letters}, 94(15):154101, Apr 2005.

\bibitem{Taniguchi2007}
T.~Taniguchi and G.~P. Morriss.
\newblock {L}yapunov modes for a nonequilibrium system with a heat flux.
\newblock {\em Comptes Rendus Physique}, 8(5-6):625--632, June-August 2007.

\bibitem{Wolfe2007}
C.~L. Wolfe and R.~M. Samelson.
\newblock An efficient method for recovering {L}yapunov vectors from singular
  vectors.
\newblock {\em Tellus: Series A}, 59(3):355--366, 2007.

\end{thebibliography}
\bibliographystyle{plain}

\end{document}